\def\Ric{\mathop{\rm Ric}}
\def\cRic{\mathop{\text{\rm R\i\makebox[0pt]{\raisebox{5pt}{\footnotesize$\circ$\;}}c}}}
\def\Riem{\mathop{\rm Rm}}
\newtheorem{theorem}{Theorem}[section]
\newtheorem{proposition}[theorem]{Proposition}
\newtheorem{lemma}[theorem]{Lemma}
\newtheorem{corollary}[theorem]{Corollary}
\title{Canonical metrics and ambiK\"ahler structures on 4-manifolds with $U(2)$ symmetry}
\author{Keaton Naff and Brian Weber}
\date{August 31, 2023}
\begin{document}
	
	\maketitle
	
	
	\abstract{
		For $U(2)$-invariant 4-metrics, we show that the $B^t$-flat metrics are very different from the other canonical metrics (Bach-flat, Einstein, extremal K\"ahler, etc).
		We show every $U(2)$-invariant metric is conformal to two separate K\"ahler metrics, leading to ambiK\"ahler structures.
		Using this observation we find new complete extremal K\"ahler metrics on the total spaces of $\mathcal{O}(-1)$ and $\mathcal{O}(+1)$ that are conformal to the Taub-bolt metric.
		In addition to its usual hyperK\"ahler structure, the Taub-NUT's conformal class contains two additional complete K\"ahler metrics that make up an ambi-K\"ahler pair, making five independent compatible complex structures for the Taub-NUT, each of which has a conformally K\"ahler (1,1) form.
	}
	
	\section{Introduction}
	
	Cohomogeneity-1 metrics with $U(2)$ symmetry have the form
	\begin{eqnarray}
		\begin{aligned}
			g\;=\;A(r)\,dr^2+B(r)\,(\eta^1)^2+C(r)\,\Big((\eta^2)^2+(\eta^3)^2\Big)
		\end{aligned} \label{EqnMetOrig}
	\end{eqnarray}
	where $\eta^1$, $\eta^2$, $\eta^3$ are the usual left-invariant covector fields on $\mathbb{S}^3$.
	Naively the topology is $\mathbb{R}\times\mathbb{S}^3$, but topological changes occur at locations where $B$ or $C$ reach zero and there could be a quotient on the $\mathbb{S}^3$ factor.
	We classify canonical metrics of this form particularly the $B^t$-flat metrics, and create some new explicit examples of canonical metrics using the ambiK\"ahler techniques of \cite{ACG16}.
	This project began as a way to develop supporting examples for other work, and treads such familiar ground that we expected few surprises.
	But we did find some surprises, two of which we feel worth reporting to the wider community.
	
	The first is how the $B^t$-flat metrics fit amongst the other canonical metrics.
	The space of extremal K\"ahler metrics is rather small---up to homothety the moduli space is 3-dimensional---and with one exception there are basically no other canonical metrics.
	Up to a choice of conformal factor, the Bach-flat metrics are a 2-parameter subspace of the extremal\footnote{We will use ``extremal'' to mean ``extremal K\"ahler.''} metrics.
	The Einstein and harmonic-curvature metrics \cite{DerdHarm} are identical, and up to conformal factors are exactly the Bach-flat metrics.
	Half-conformally flat metrics are conformally extremal, and up to conformal factors the metrics with $W^+=0$ (or $W^-=0$) form a 1-parameter subspace of the Bach-flat metrics.
	The K\"ahler-Einstein (KE) metrics and the Ricci-flat metrics are each a 1-parameter subclass of the Bach-flat metrics.
	Up to homothety there are exactly five Ricci-flat KE metrics: flat $\mathbb{R}^4$, the Eguchi-Hanson, the Taub-NUT, and two metrics with curvature singularities.
	The Taub-NUT is extraordinary; see Proposition \ref{EqnTwoRFEMetrics} and Section \ref{SubsecModTaubs}.
	
	The exception to this framework are the $B^t$-flat metrics (see \cite{GV16}), which are generally not conformally extremal.
	A $B^t$-flat metric satisfies the Euler-Lagrange equations of the functional
	\begin{equation}
		B^t\;=\;\int\vert{}W\vert^2\;+\;t\int{}s^2
	\end{equation}
	for given $t\in(-\infty,\infty]$, where $B^\infty=\int{}s^2$.
	A metric that extremizes $B^\infty$ is either scalar-flat or Einstein \cite{Besse} (the scalar-flat condition is not elliptic but the Einstein condition is second-order elliptic).
	The $B^0$ extremals are the Bach-flat metrics, and the Euler-Lagrange equations are $4^{th}$ order and underdetermined (although upon fixing a conformal gauge they become $4^{th}$ order elliptic).
	For $t\ne0,\infty$ the $B^t$ Euler-Lagrange equations are an overdetermined $8^{th}$ order system; after an appropriate reduction we find a 5-dimensional moduli space of $B^t$-flat metrics up to homothety.
	If the constant scalar curvature (CSC) condition is imposed, the CSC $B^t$-flat metrics constitute a 4-parameter family up to homothety.
	Intuitively, as $t$ varies in $[0,\infty]$, the $B^t$-flat metrics would seem to interpolate between the Einstein metrics at $t=\infty$ and the Bach-flat metrics at $t=0$.
	As we pointed out, up to conformal factors these are exactly the same class, so it would stand to reason that the $B^t$-flat metrics would stay within this class, perhaps up to conformal factors.
	But this is simply not the case, as we show in Theorem \ref{ThmNonTrivialBtFlats}.
	
	We characterize two other candidates for the status of ``canonical'' metrics: the CSC metrics and the ``half-harmonic'' (sometimes called ``weakly self-dual'') metrics.
	The half-harmonic metrics are those with either $\delta{}W^+=0$ or $\delta{}W^-=0$.
	Combining $\delta{}W^+=0$ with one other condition creates an elliptic system; this extra condition might be a scalar curvature condition such as $s=const$, a K\"ahler condition, or that $\delta{}W^-=0$.
	The case of $\delta{}W^-=0$ along with the K\"ahler condition was studied in \cite{ACG03}.
	
	The second surprise has to do with ambiK\"ahler pairs.
	Any metric (\ref{EqnMetOrig}) is automatically compatible with two complex structures which give opposite orientations---in short, each K\"ahler metric (\ref{EqnMetOrig}) is a partner in an ambiK\"ahler pair.
	From this observation we find four notable metrics: an ambiK\"ahler pair that is conformal to the classic Taub-NUT on $\mathbb{C}^2$ and an ambiK\"ahler pair that is conformal to the classic Taub-bolt.\footnote{Unlike the Taub-NUT, the Taub-bolt is non-K\"ahler so with the other two does not create any ambiK\"ahler triple.}
	The ambiK\"ahler pair conformal to the Taub-NUT are both complete extremal K\"ahler metrics, one of which has zero scalar curvature (ZSC) and is 2-ended, and the other of which is one-ended and strictly extremal.
	The two ambiK\"ahler metrics conformal to the Taub-bolt are both complete extremal metrics, and exist on two different underlying complex surfaces, $\mathcal{O}(-1)$ and $\mathcal{O}(+1)$.
	The metric on $\mathcal{O}(+1)$ is the only complete extremal K\"ahler metric, known to the authors, on the total space of any $\mathcal{O}(k)$ where $k>0$ (by contrast the Burns, Eguchi-Hanson, and LeBrun metrics are all K\"ahler metrics on $\mathcal{O}(k)$ with $k<0$).
	
	
	We first explain our computational framework.
	Solving $dz=\frac{2\sqrt{AB}}{C}dr$ for $z$, the metric (\ref{EqnMetOrig}) is
	\begin{eqnarray}
		\begin{aligned}
			g\;=\;C\left(\frac{1}{4F}dz^2+F(\eta^1)^2\,+\,(\eta^2)^2+(\eta^3)^2\right)
		\end{aligned} \label{EqnMetrInZ}
	\end{eqnarray}
	where we have abbreviated $F=\frac{B}{C}$, now a function of $z$.
	If $f=f(z)$ is any function and $\{e_1,e_1,e_3\}$ is the $\mathbb{S}^3$ frame dual to $\{\eta^1,\eta^2,\eta^3\}$, then
	\begin{equation}
		J_f\;=\;
		-2f\frac{\partial}{\partial{}z}\otimes\eta^1
		+\frac{1}{2f}e_1\otimes{}dz
		-e_2\otimes\eta^3
		+e_3\otimes\eta^2 \label{EqnJStructs}
	\end{equation}
	is always a complex stucture (where $f\ne0$); see Lemma \ref{LemmaIntegrabilityF}.
	Setting $f=\pm{}F$, the two complex structures $J^\pm=J_{\pm{}F}$ are also compatible with $g$, and produce opposite orientations.
	Their (1,1) forms are
	\begin{equation}
		\begin{aligned}
			&\omega^\pm
			\;=\;g(J^\pm\cdot,\,\cdot)
			\;=\;\pm\frac12Cdz\wedge\eta^1\,+\,C\eta^2\wedge\eta^3.
		\end{aligned} \label{EqnTwoKahlers}
	\end{equation}
	From $d\eta^i=-\epsilon^i{}_{jk}\eta^j\wedge\eta^k$ we have $d\omega^\pm=(\pm{}C+C_z)dz\wedge\eta^2\wedge\eta^3$, so a $U(2)$-invariant metric $g$ is always conformally K\"ahler, and K\"ahler when the conformal factor is chosen to be $C=C_0e^{\mp{z}}$, respectively.
	
	The following linear operators appear frequently:
	\begin{equation}
		\mathcal{L}^{+}
		=\left(-\frac12\frac{d}{dz}+1\right)\left(-\frac{d}{dz}+1\right),
		\quad
		\mathcal{L}^-
		=\left(\frac12\frac{d}{dz}+1\right)\left(\frac{d}{dz}+1\right)
	\end{equation}
	as does the $4^{th}$ order linear operator $\mathcal{L}^+\circ\mathcal{L}^-=\frac14\frac{\partial^4}{\partial{}z^4}-\frac54\frac{\partial^2}{\partial{z}^2}+1$.
	The third-order nonlinear operator $\mathcal{B}$ is also important:
	\begin{equation}
		\mathcal{B}(F,F)=
		\left(-\frac12F_{zz}+\frac32F_z+F-1\right)(\mathcal{L}^+(F)-1)
		+F_{z}\big(\mathcal{L}^+(F)\big){}_z. \label{EqnBOperatorDef}
	\end{equation}
	This operator appears messy, but a relationship exists between $\mathcal{B}$ and $\mathcal{L}^+\circ\mathcal{L}^-$.
	Namely, $\mathcal{B}$ is a first integral of the inhomogeneous operator $F\mapsto\mathcal{L}^+(\mathcal{L}^-(F))-1$, meaning that whenever $F$ solves $\mathcal{L}^+(\mathcal{L}^-(F))-1=0$ then $\mathcal{B}(F,F)$ is a constant.
	See equation (\ref{EqnBConstMotLLF}).
	We will often use $\{\sigma^0,\sigma^1,\sigma^2,\sigma^3\}$ where $\sigma^0=\frac{1}{\vert{}dz\vert}dz$, $\sigma^i=\frac{1}{\vert\eta^i\vert}\eta^i$ for the orthonormal frame corresponding to the orthogonal frame $\{dz,\eta^1,\eta^2,\eta^3\}$.
	
	\begin{proposition} \label{PropCurvatureIntro}
		The metric (\ref{EqnMetrInZ}) has scalar curvature
		\begin{equation}
			s\;=\;-4C^{-1}\left(\frac{\partial^2{}F}{\partial{}z^2}+\frac12F-2\right)
			-24C^{-\frac32}\frac{\partial}{\partial{}z}\left(F\,\frac{\partial{}C^{\frac12}}{\partial{}z}\right)
		\end{equation}
		and trace-free Ricci tensor
		\begin{equation}
			\begin{aligned}
				&\cRic
				\;=\;4FC^{-\frac12}\left(\frac{\partial^2}{\partial{}z^2}C^{-\frac12}-\frac14C^{-\frac12}\right)\cdot\left((\sigma^0)^2-(\sigma^1)^2\right) \\
				&\quad+2\left(
				C^{-\frac12}\frac{\partial}{\partial{}z}\left(F\frac{\partial}{\partial{}z}C^{-\frac12}\right)
				-C^{-1}\left(\frac12\frac{\partial^2F}{\partial{}z^2}-\frac34F+1\right)
				\right)
				\cdot\left((\sigma^0)^2+(\sigma^1)^2-(\sigma^2)^2-(\sigma^3)^2\right).
			\end{aligned} \label{EqnTfRicPre}
		\end{equation}
		The Weyl curvatures of $g$ are
		\begin{equation}
			\begin{aligned}
				W^\pm\;=\;-\frac{1}{C}(\mathcal{L}^\pm(F)-1)\left(\omega^\pm\otimes\omega^{\pm}-\frac23Id_{\bigwedge^\pm}\right)
			\end{aligned}
		\end{equation}
		and the divergences of the Weyl tensors are
		\begin{equation}
			\begin{aligned}
				\delta{}W^\pm
				\;=\;W^\pm\left(
				\nabla\log\left\vert{}e^{\pm\frac32z}(\mathcal{L}^\pm(F)-1)\sqrt{C}\right\vert,\;\cdot\,,\;\cdot\,,\;\cdot\;
				\right).
			\end{aligned}
		\end{equation}
		The Bach tensor is
		\begin{equation}
			\begin{aligned}
				Bach
				&\;=\;
				\frac{16}{3C^2}\cdot{}F\cdot\big(\mathcal{L}^-(\mathcal{L}^+(F))-1\big)\cdot\Big(-2(\sigma^1)^2\,+\,(\sigma^2)^2\,+\,(\sigma^3)^2
				\Big) \\
				&\quad\quad
				+\frac{8}{3C^2}\cdot\mathcal{B}(F,F)\cdot\Big(
				-(\sigma^0)^2-(\sigma^1)^2
				+(\sigma^2)^2+(\sigma^3)^2
				\Big).
			\end{aligned} \label{EqnBachPre}
		\end{equation}
		If the metric is K\"ahler with respect to $J^+$, so $C=C_0e^{-z}$, then the scalar curvature and Ricci form are
		\begin{equation}
			\begin{aligned}
				&s=-\frac{8}{C}\left(\mathcal{L}^+(F)-1\right), \quad\text{and} \\
				&\rho
				=
				-\frac{2}{C}\left(\mathcal{L}^+(F)-1\right)\omega^+
				-
				\frac{2}{C}\left[
				\left(-\frac12\frac{\partial}{\partial{}z}+1\right)
				\left(\frac{\partial}{\partial{}z}+1\right)F
				-1\right]\omega^-.
			\end{aligned}
		\end{equation}
	\end{proposition}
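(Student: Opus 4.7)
The plan is to derive every formula by direct computation in the orthonormal coframe
$\sigma^0=\frac{\sqrt{C}}{2\sqrt{F}}\,dz$, $\sigma^1=\sqrt{CF}\,\eta^1$, $\sigma^2=\sqrt{C}\,\eta^2$, $\sigma^3=\sqrt{C}\,\eta^3$, and then to organize the answers using the ambiK\"ahler structures $J^\pm$ introduced in (\ref{EqnJStructs})--(\ref{EqnTwoKahlers}). First I would extract the connection $1$-forms $\omega^i{}_j$ from Cartan's first structure equation $d\sigma^i=-\omega^i{}_j\wedge\sigma^j$, using $d\eta^i=-\epsilon^i{}_{jk}\eta^j\wedge\eta^k$ to reduce every $r$-derivative to a $z$-derivative of $F$ and $C$. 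Contracting the curvature $2$-forms $\Omega^i{}_j=d\omega^i{}_j+\omega^i{}_k\wedge\omega^k{}_j$ then yields the Ricci and scalar curvatures; the trace-free Ricci decomposes under the $U(2)$ symmetry into exactly the two irreducible pieces $(\sigma^0)^2-(\sigma^1)^2$ and $(\sigma^0)^2+(\sigma^1)^2-(\sigma^2)^2-(\sigma^3)^2$ that appear in (\ref{EqnTfRicPre}), and matching scalar coefficients pins down both formulas.

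For the Weyl pieces I would use the $\Lambda^\pm$-bases $\{\omega^\pm,\sigma^0\wedge\sigma^2\pm\sigma^1\wedge\sigma^3,\sigma^0\wedge\sigma^3\mp\sigma^1\wedge\sigma^2\}$. A $U(2)$-invariant trace-free endomorphism of $\Lambda^\pm$ must be diagonal in this basis and, by the residual $SU(2)$ action on the $(\sigma^2,\sigma^3)$ plane, must have its two non-$\omega^\pm$ eigenvalues equal. Hence $W^\pm$ is forced into the stated form up to a single scalar, and I only need to read off one component of $\Omega^i{}_j$ to identify the coefficient $\mathcal{L}^\pm(F)-1$; the K\"ahler reduction $C=C_0e^{\mp z}$, where standard K\"ahler identities give $s=-\frac{8}{C}(\mathcal{L}^+(F)-1)$, serves as a convenient cross-check. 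The divergence formulas then follow from the general fact that for any rank-one Weyl tensor $W^\pm=h(\omega^\pm\otimes\omega^\pm-\frac23 Id_{\Lambda^\pm})$, $\delta W^\pm=W^\pm(\nabla\log|h\,\mu^\pm|,\cdot,\cdot,\cdot)$ where $\mu^\pm$ is the conformal weight that makes $\omega^\pm$ parallel; a short computation identifies $\mu^\pm=e^{\pm 3z/2}\sqrt{C}$. The K\"ahler specialization $C=C_0e^{-z}$ and the identity $\rho=-\frac{s}{4}\omega^++(\text{anti-self-dual part})$ then yield the final formulas after rewriting the anti-self-dual part through $\mathcal{L}^-$.

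The step I expect to be genuinely hard is the Bach tensor in (\ref{EqnBachPre}), since $Bach_{ab}=\nabla^c\nabla^d W_{cabd}+\frac12 W_{cabd}R^{cd}$ applies a fourth-order operator to coefficients that already carry second-order information about $F$ and $C$. My approach would be to expand $Bach$ as a polynomial in $F_z,F_{zz},F_{zzz},F_{zzzz},C_z,C_{zz}$ and $C^{-1}$, and then show the result collapses into the two tensor channels stated: the $-2(\sigma^1)^2+(\sigma^2)^2+(\sigma^3)^2$ channel with coefficient $\frac{16F}{3C^2}(\mathcal{L}^-\mathcal{L}^+(F)-1)$ and the $-(\sigma^0)^2-(\sigma^1)^2+(\sigma^2)^2+(\sigma^3)^2$ channel with coefficient $\frac{8}{3C^2}\mathcal{B}(F,F)$. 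To control the bookkeeping I would exploit the conformal covariance $Bach_{e^{2\phi}g}=e^{-2\phi}Bach_g$ to first absorb $C$ into a conformal change, reducing the main computation to an expression in $F$ alone, and then reinstate $C$ via the transformation law. The fact that $\mathcal{B}$ appears as a first integral of $\mathcal{L}^+\mathcal{L}^-(F)-1$ (see (\ref{EqnBConstMotLLF})) is the algebraic identity that forces the $C$-independent part of $Bach$ into exactly this two-term form, and recognizing this structure in advance is what makes the calculation tractable rather than a blind expansion.
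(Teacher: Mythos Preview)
Your plan is sound and would yield correct proofs, but the paper proceeds quite differently, and the contrast is instructive. Rather than a direct Cartan-frame computation, the paper first puts the metric into LeBrun-ansatz form (\ref{EqnLeBrunForm}) and computes the Ricci form in the K\"ahler gauge $C=C_0e^{-z}$ via $\rho=-i\partial\bar\partial u$; the general Ricci and scalar formulas then follow from the standard conformal-change identities in \cite{Besse} with conformal factor $U=e^{z/2}C^{1/2}$. Likewise $W^\pm$ is obtained not by symmetry reduction of the curvature $2$-forms but by invoking Derdzinski's theorem in the K\"ahler gauge and conformally transporting, and $\delta W^\pm$ comes from Derdzinski's conformal-change formula for the Weyl divergence. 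The largest divergence is for the Bach tensor: the paper avoids your fourth-order expansion entirely by computing the variation of $\int|W^+|^2\,dVol$ under two explicit $U(2)$-invariant metric variations---$F\mapsto F+sf$ and a diffeomorphism flow in $z$---and reading $Bach$ off from $\delta\int|W^+|^2=\int\langle\delta g,-2\,Bach\rangle$. This variational route explains \emph{why} $\mathcal{B}(F,F)$ appears (it is the response to the diffeomorphism variation, hence its constancy along solutions is a manifestation of the second Bianchi identity) rather than merely recognizing it \emph{post hoc} in a long expansion. Your approach has the virtue of being self-contained and not relying on the K\"ahler or LeBrun machinery, and your use of conformal covariance to strip out $C$ before computing $Bach$ is a legitimate shortcut; but the paper's conformal-bootstrap-from-K\"ahler strategy is shorter and more conceptual, and the variational derivation of $Bach$ is what makes the linearity of the Bach-flat equations transparent.
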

	We remark that the $U(2)$-ansatz linearlizes the Bach-flat equations $Bach=0$, reducing them to $\mathcal{L}^+\circ\mathcal{L}^-(F)-1=0$.
	The auxiliary equation $\mathcal{B}(F,F)=0$ is an algebraic restriction on initial conditions.
	
	When studying metrics---rather than just solutions of ODEs---it is useful to reduce by homothetic equivalence.
	In our case this reduces the dimension of the solution space by two: one dimension for translation in $z$ and one for multiplication of $g$ by a positive constant.
	
	\begin{proposition}[Extremal, Bach-flat, K\"ahler-Einstein metrics] \label{IntroPropExtrBachFlat}
		The metric (\ref{EqnMetrInZ}) is extremal with complex structure $J^+$ if and only if $C=C_0e^{-z}$ and $\mathcal{L}^+(\mathcal{L}^-(F))-1=0$, meaning
		\begin{equation}
			F(z)=1+\frac12C_1e^{-2z}+C_2e^{-z}+C_3e^{z}+\frac12C_4e^{2z}. \label{EqnFExtrPre}
		\end{equation}
		The metric (\ref{EqnMetrInZ}) is Bach-flat if and only if $F$ satisfies (\ref{EqnFExtrPre}) and $C_1C_4-C_2C_3=0$.
		The metric (\ref{EqnMetrInZ}) is K\"ahler-Einstein with complex structure $J^+$ if and only if $C=C_0e^{-z}$ and $F$ satisfies (\ref{EqnFExtrPre}) with $C_1=C_3=0$.
		The metric has $W^\pm=0$ if and only if $\mathcal{L}^\pm(F)-1=0$ meaning, respectively,
		\begin{equation}
			F(z)=1+C_3e^{z}+\frac12C_4e^{2z}, \quad\text{or}\quad F(z)=1+\frac12C_1e^{-2z}+C_2e^{-z}.
		\end{equation}	
		Consequently, up to homothety, the extremal metrics constitute a $3$-parameter family of metrics.
		Up to homothety and conformal transformation, the Bach-flat metrics constitute a $2$-parameter family of metrics which, up to conformal factors, is a subspace of the extremal metrics.
		Up to homothety and conformal transformation, the metrics with $W^+=0$ (or $W^-=0$) form a 1-parameter subspace of the Bach-flat metrics.
		The KE metrics also form a 1-parameter subspace of the Bach-flat metrics.
	\end{proposition}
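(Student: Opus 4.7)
The K\"ahler condition with respect to $J^+$ forces $C=C_0 e^{-z}$, as noted in the introduction, and I assume this throughout the extremal and KE analyses.  The plan is to translate each canonical condition into a linear ODE in the single function $F$, solve it by elementary constant-coefficient methods, and finish with a parameter count.

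\emph{Extremal.}  I use the classical definition that $J^+\nabla s$ is Killing.  Since $s=s(z)$, one computes $\nabla s=(4F/C)\,s'(z)\,\partial_z$, and (\ref{EqnJStructs}) gives $J^+\partial_z=(2F)^{-1}e_1$, so $J^+\nabla s=(2s'/C)\,e_1$.  The field $e_1$ generates the central $U(1)\subset U(2)$ and is Killing for every $U(2)$-invariant metric, so $f\,e_1$ with $f=2s'/C$ is Killing iff $df\vee e_1^\flat=0$; as $dz\perp\eta^1$ this reduces to $f'(z)=0$, i.e.\ $s'/C=\mathrm{const}$.  Inserting $s=-8(\mathcal{L}^+(F)-1)/C$ and $C=C_0 e^{-z}$, the condition becomes $e^{2z}\bigl[(\mathcal{L}^+(F)-1)+(\mathcal{L}^+(F))_z\bigr]=\mathrm{const}$, which integrates to $\mathcal{L}^+(F)-1=\alpha e^{-z}+\beta e^{-2z}$.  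Since $\mathcal{L}^-(1)=1$ while $\mathcal{L}^-$ annihilates both $e^{-z}$ and $e^{-2z}$, this is equivalent to $\mathcal{L}^+\mathcal{L}^-(F)=1$.  The characteristic polynomial of the homogeneous part is $\tfrac14(\lambda^2-1)(\lambda^2-4)$ with roots $\pm1,\pm2$, producing (\ref{EqnFExtrPre}) after adding the particular solution $F\equiv 1$.

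\emph{Bach-flat, K\"ahler--Einstein, and half-Weyl-flat.}  The two tensors in (\ref{EqnBachPre}) are linearly independent (their $(\sigma^0)^2$ coefficients differ), so $Bach=0$ iff both $\mathcal{L}^+\mathcal{L}^-(F)-1=0$ and $\mathcal{B}(F,F)=0$.  The first gives the extremal family; on that family $\mathcal{B}(F,F)$ is $z$-constant by the first-integral property, so only weight-zero exponential products $C_1C_4$ and $C_2C_3$ can contribute, and a direct expansion of (\ref{EqnBOperatorDef}) in the basis of (\ref{EqnFExtrPre}) yields $\mathcal{B}(F,F)=\mathrm{const}\cdot(C_1C_4-C_2C_3)$.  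For KE, the two tensors in (\ref{EqnTfRicPre}) are also linearly independent; with $C=C_0 e^{-z}$, $(C^{-1/2})_{zz}=\tfrac14 C^{-1/2}$ kills the first coefficient automatically, and the second collapses to $(D-2)(D+1)F=-2$, whose general solution $F=1+C_2 e^{-z}+\tfrac12 C_4 e^{2z}$ is exactly (\ref{EqnFExtrPre}) with $C_1=C_3=0$.  The half-Weyl-flat case is read off directly from $W^\pm=-C^{-1}(\mathcal{L}^\pm(F)-1)(\omega^\pm\otimes\omega^\pm-\tfrac23 \mathrm{Id})$: this vanishes iff $\mathcal{L}^\pm(F)=1$, and the corresponding second-order ODEs have characteristic roots $1,2$ (resp.\ $-1,-2$), giving the stated families.

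\emph{Moduli.}  The extremal family is parameterized by $(C_0,C_1,\ldots,C_4)\in\mathbb{R}^5$, modulo the two commuting $1$-parameter actions of $z$-translation (each $C_i$ is rescaled by a fixed exponential of $z_0$, and $C_0\mapsto C_0 e^{-z_0}$) and of homothety $g\mapsto\lambda^2 g$ (which scales $C_0$ only); this yields dimension $5-2=3$.  Because the Bach tensor depends only on $F$, conformal rescaling absorbs $C$ entirely; the 4 parameters in $F$, subject to the single constraint $C_1C_4=C_2C_3$ and to $z$-translation, give dimension $4-1-1=2$.  The $W^\pm=0$ subfamilies automatically satisfy $C_1C_4=C_2C_3$ (two of the $C_i$ vanish), and the 2 surviving parameters modulo translation give dimension 1; the KE family likewise has 2 parameters modulo translation, dimension 1.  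The principal technical obstacle is the direct expansion of $\mathcal{B}(F,F)$ on the extremal family: the first-integral property reduces it to an algebraic polynomial in the $C_i$, but verifying explicitly that every contribution other than $C_1C_4$ and $C_2C_3$ cancels, and that these two enter with opposite and equal-magnitude coefficients, is the one hands-on calculation the proof cannot avoid.
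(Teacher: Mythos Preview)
Your proposal is correct and tracks the paper's own arguments closely: the extremal characterization via the Killing condition on $J^+\nabla s$ is exactly Proposition~\ref{PropExtremal}, the Bach-flat reduction to $\mathcal{L}^+\mathcal{L}^-(F)=1$ together with $\mathcal{B}(F,F)=0$ is Lemma~\ref{LemmaBSols}, the KE case is the specialization of Proposition~\ref{PropEinstCondition} to $C_6=0$, and the half-Weyl-flat case is Proposition~\ref{PropHCFmetrics}. Your parameter counts agree with the paper's.

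One point worth flagging as a genuine (if small) improvement over the paper: your ``weight-zero'' argument for $\mathcal{B}(F,F)$ on the extremal family is cleaner than what the paper does. The paper simply asserts by ``direct computation'' that $\mathcal{B}(F,F)=3(C_2C_3-C_1C_4)$; you instead observe that the first-integral identity forces $\mathcal{B}(F,F)$ to be $z$-independent on the extremal family, and since $\mathcal{B}$ is quadratic in $F$ and each monomial $e^{kz}$ carries weight $k$, only the weight-zero pairings $C_1C_4$ and $C_2C_3$ can survive (the constant term vanishes because $\mathcal{B}(1,1)=0$). This reduces the hands-on verification to checking two coefficients rather than expanding the entire cubic expression, which is a worthwhile economy.
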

	
	A metric is said to have \textit{harmonic curvature} if $\delta\Riem=0$, which is equivalent to $\delta{}W=0$ and $s=const$; see \cite{DerdHarm}, \cite{Bourg81}.
	In the $U(2)$-invariant case $\delta{}W=0$ already implies scalar curvature is constant.
	\begin{proposition}[Einstein and harmonic-curvature metrics]
		For the metric (\ref{EqnMetrInZ}) the following are equivalent: 1) $\delta{}W=0$, 2) $\delta\Riem=0$, 3) the metric is Einstein: $\cRic=0$, and 4) $F$ and $C$ satisfy
		\begin{equation}
			F=1+\frac12C_1e^{-2z}+C_2e^{-z}+C_3e^z+\frac12C_4e^{2z}\,,\;\;
			C=\frac{e^{-z}}{(C_5+C_6e^{-z})^2},\label{EqnEinstPre}
		\end{equation}
		with the two relations $C_1C_5-C_2C_6=0$ and $C_3C_5-C_4C_6=0$.
		Given (\ref{EqnEinstPre}), scalar curvature is the constant $s=-24(C_2C_5^2-2C_5C_6+C_3C_6^2)$.
		
		Further, a metric (\ref{EqnMetrInZ}) is Bach-flat if and only if it is conformal to an Einstein metric.
		The metric (\ref{EqnEinstPre}) is KE with respect to $J^+$ if and only if $C_6=0$ (so also $C_1=C_3=0$), and KE with respect to $J^-$ if and only if $C_5=0$ (so also $C_2=C_4=0$).
		
		Up to homothety, there is a 1-parameter family of Ricci-flat metrics.
		Up to homothety, there are exactly five Ricci-flat KE metrics: the flat metric, the Taub-NUT metric, the metric given by (\ref{EqnSuperTaubNUT}) below, the Eguchi-Hanson metric, and the metric given by (\ref{EqnSuperEguchiHanson}) below.
	\end{proposition}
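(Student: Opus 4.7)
Each of the four conditions translates directly into an ODE/algebraic system in $F(z)$ and $C(z)$ via Proposition~\ref{PropCurvatureIntro}; the task is to show these systems coincide. The technically hardest equivalence is (1) $\Leftrightarrow$ (4), which I would tackle first.

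The divergence formula
\[
\delta W^\pm = W^\pm\!\bigl(\nabla\log|e^{\pm 3z/2}(\mathcal{L}^\pm(F)-1)\sqrt{C}|,\,\cdot,\,\cdot,\,\cdot\bigr)
\]
forces, for each sign, that either $W^\pm \equiv 0$ or $e^{\pm 3z/2}(\mathcal{L}^\pm(F)-1)\sqrt{C}$ is a constant $K_\pm$. In the non-degenerate case, multiplying the two constancy equations gives $C(\mathcal{L}^+(F)-1)(\mathcal{L}^-(F)-1) = K_+K_-$ and dividing gives $e^{3z}(\mathcal{L}^+(F)-1)/(\mathcal{L}^-(F)-1) = K_+/K_-$. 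The key trick is that this pair decouples $C$ from $F$: differentiating the product and combining with the ratio forces $\mathcal{L}^+(\mathcal{L}^-(F)) - 1 = 0$, so by Proposition~\ref{IntroPropExtrBachFlat} $F$ has the form (\ref{EqnFExtrPre}). Once $F$ is known, the original constancy conditions reduce to a linear problem for $\sqrt{C}$ whose solution is $C^{-1/2} = C_5 e^{z/2} + C_6 e^{-z/2}$, equivalently $C = e^{-z}/(C_5 + C_6 e^{-z})^2$; matching the residual exponentials in $(\mathcal{L}^\pm(F)-1)\sqrt{C} = K_\pm e^{\mp 3z/2}$ then forces the two algebraic relations $C_1C_5 = C_2C_6$ and $C_3C_5 = C_4C_6$. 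The degenerate cases where $W^\pm \equiv 0$ correspond to $(C_1,C_2) = 0$ or $(C_3,C_4) = 0$ and fit the same formulas.

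The direction (4) $\Rightarrow$ (3) is then a direct substitution into (\ref{EqnTfRicPre}): setting $u := C^{-1/2}$ gives $u'' = u/4$, which makes the first coefficient of $\cRic$ vanish identically; the second coefficient, after using $u'' = u/4$, reduces to a linear combination of $e^{\pm z/2}$ and $e^{\pm 3z/2}$ that vanishes precisely under the two relations. The direction (3) $\Rightarrow$ (1) is the second Bianchi identity, and the equivalence of (1) with (2) is classical given that $\delta W = 0$ forces $s = \text{const}$ in the $U(2)$-invariant setting (as noted in the paper). The scalar curvature value follows by plugging (\ref{EqnEinstPre}) into the scalar curvature formula, where the exponentials collapse via the two relations to leave $s = -24(C_2C_5^2 - 2C_5C_6 + C_3C_6^2)$.

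The remaining assertions follow cleanly from this normal form. For ``Bach-flat $\Leftrightarrow$ conformal to Einstein,'' Proposition~\ref{IntroPropExtrBachFlat} identifies Bach-flat with $F$ of the form (\ref{EqnFExtrPre}) plus $C_1C_4 = C_2C_3$; this last relation is exactly the vanishing of the $2\times 2$ determinant governing the homogeneous system $C_1C_5 = C_2C_6$, $C_3C_5 = C_4C_6$, and any nontrivial $(C_5,C_6)$ solving it yields a conformal factor placing the metric in the Einstein family. The converse is the conformal invariance of the Bach equation. The KE criterion follows from matching $C = C_0 e^{-z}$ against $e^{-z}/(C_5+C_6e^{-z})^2$, which requires $C_6 = 0$ (hence $C_1 = C_3 = 0$ by the relations) for the $J^+$ case, symmetrically for $J^-$. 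Finally, modding the 4-parameter Einstein family by $z$-translation and overall scaling and imposing $s = 0$ leaves a 1-parameter family of Ricci-flat metrics; restricting to the KE branches ($C_6 = 0$ or $C_5 = 0$) and further imposing $s = 0$ collapses $F$ to $1 + \tfrac12 C_4 e^{\pm 2z}$ with $C$ a constant multiple of $e^{\mp z}$, which after normalizing by homothety yields exactly five models: the flat metric, Eguchi-Hanson, Taub-NUT, and two with curvature singularities.
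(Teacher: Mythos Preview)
Your argument for the equivalence (1)--(4) and for the Bach-flat characterization is essentially the same as the paper's, and is fine. The gap is in the final paragraph, the classification of Ricci-flat KE metrics.

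You restrict to the branches $C_6=0$ or $C_5=0$, which by the proposition itself are exactly the conditions for the metric to be K\"ahler--Einstein with respect to $J^+$ or $J^-$. But the statement asks for Ricci-flat KE metrics with respect to \emph{any} compatible complex structure, and the Taub-NUT is not K\"ahler for either $J^+$ or $J^-$: in the normal form (\ref{EqnEinstPre}) it has $F=(1-e^{-z})^2$ and $C=\dfrac{e^{-z}}{(C_5+C_6e^{-z})^2}$ with $C_5$ and $C_6$ \emph{both} nonzero. Its K\"ahler structure comes from the right-invariant hyperK\"ahler triple $I^\pm$ of Proposition~\ref{PropIntegrability}, not from $J^\pm$. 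So your branch analysis actually produces only three metrics up to homothety---flat, Eguchi--Hanson, and the singular metric (\ref{EqnSuperEguchiHanson})---and the claim that ``five models'' result does not follow from the argument you gave.

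The paper closes this gap by invoking Derdzinski's theorem: any scalar-flat K\"ahler metric, regardless of which complex structure is used, is half-conformally flat. This converts the K\"ahler hypothesis into the purely $F$-level condition $C_1=C_2=0$ or $C_3=C_4=0$, which is independent of the choice of complex structure. Working (say) under $C_3=C_4=0$, the two Einstein relations and $s=0$ then give a short case analysis on whether $C_1,C_2$ vanish; the case $C_1,C_2$ both nonzero forces $C_1=\tfrac12C_2^2$ and recovers the Taub-NUT and its singular sibling (\ref{EqnSuperTaubNUT}) as K\"ahler with respect to $I^+$. Without Derdzinski's theorem or some substitute that handles complex structures other than $J^\pm$, your argument cannot reach these two metrics.
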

	
	\begin{proposition}[CSC and half-harmonic metrics]
		The metric (\ref{EqnMetrInZ}) has scalar curvature $s=s_0$ if and only if $F$, $C$ satisfy the second order relation
		\begin{equation}
			0\;=\;
			s_0C^{\frac32}
			+4C^{\frac12}\left(\frac{\partial^2{}F}{\partial{}z^2}+\frac12F-2\right)
			+24\frac{\partial}{\partial{}z}\left(F\,\frac{\partial{}C^{\frac12}}{\partial{}z}\right),
		\end{equation}
		and has $\delta{}W^\pm=0$ if and only if $e^{\pm\frac32z}(\mathcal{L}^\pm(F)-1)\sqrt{C}$ is constant.
		
		Suppose the metric is K\"ahler with respect to $J^+$, meaning $C=C_0e^{-z}$.
		Then $\delta{}W^+=0$ if and only if $F=1+C_2e^{-z}+C_3e^{z}+\frac12C_4e^{2z}$, in which case scalar curvature is the constant $s=-24C_2/C_0$.
		Likewise $\delta{}W^-=0$ if and only if $F=1+\frac12C_1e^{-2z}+C_2e^{-z}+\frac12C_4e^{2z}$, in which case the metric is extremal and $s=-\frac{24}{C_0}(C_1e^{-z}+C_2)$.
	\end{proposition}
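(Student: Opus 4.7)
The plan is to extract every claim directly from Proposition \ref{PropCurvatureIntro}, so the bulk of the work is already done and only routine algebra and linear ODEs remain.

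First I would handle the CSC relation: setting $s=s_0$ in the scalar-curvature formula of Proposition \ref{PropCurvatureIntro} and multiplying through by $-C^{3/2}$ converts the two summands into $4C^{1/2}(F_{zz}+\tfrac{1}{2}F-2)$ and $24\,\partial_z(F\,\partial_z C^{1/2})$ respectively, which gives the displayed second-order relation.

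For the half-harmonic characterization, Proposition \ref{PropCurvatureIntro} already writes $\delta W^\pm = W^\pm(\nabla\log|\Phi^\pm|,\cdot,\cdot,\cdot)$ with $\Phi^\pm := e^{\pm 3z/2}(\mathcal{L}^\pm(F)-1)\sqrt{C}$, and it also writes $W^\pm$ as $-C^{-1}(\mathcal{L}^\pm(F)-1)$ times a universal algebraic tensor. Either $\mathcal{L}^\pm(F)\equiv 1$ (so $W^\pm\equiv 0$ and $\Phi^\pm\equiv 0$ is trivially constant), or $W^\pm$ is generically nonzero and $\delta W^\pm=0$ forces $\nabla\log|\Phi^\pm|=0$, i.e.\ $\Phi^\pm$ is a nonzero constant. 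In either case the condition $\delta W^\pm=0$ is equivalent to $\Phi^\pm$ being constant.

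For the K\"ahler specialization $C=C_0 e^{-z}$, direct substitution gives
\begin{equation*}
\Phi^+ \;=\; \sqrt{C_0}\, e^{z}\,(\mathcal{L}^+(F)-1), \qquad \Phi^- \;=\; \sqrt{C_0}\, e^{-2z}\,(\mathcal{L}^-(F)-1),
\end{equation*}
so the $\delta W^\pm=0$ conditions reduce to the inhomogeneous linear ODEs $\mathcal{L}^+(F)-1 = c_+ e^{-z}$ and $\mathcal{L}^-(F)-1 = c_- e^{2z}$. The eigenvalue identity $\mathcal{L}^\pm(e^{\lambda z}) = (1\mp\tfrac{\lambda}{2})(1\mp\lambda)e^{\lambda z}$ yields $\ker\mathcal{L}^+ = \langle e^z,e^{2z}\rangle$ and $\ker\mathcal{L}^- = \langle e^{-z},e^{-2z}\rangle$, while $\mathcal{L}^+(e^{-z})=3e^{-z}$ and $\mathcal{L}^-(e^{2z})=6e^{2z}$ supply particular solutions. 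Adding each particular solution to the kernel produces the two stated forms of $F$. The scalar curvatures then fall out of the K\"ahler formula $s = -(8/C)(\mathcal{L}^+(F)-1)$ from Proposition \ref{PropCurvatureIntro}, and comparing the $\delta W^-=0$ family with (\ref{EqnFExtrPre}) shows it is exactly the $C_3=0$ subfamily of the extremal family, hence extremal. The only mildly subtle point is the two-case dichotomy in the $\delta W^\pm=0$ step, but both cases collapse to the same clean condition on $\Phi^\pm$, so nothing is conceptually difficult once Proposition \ref{PropCurvatureIntro} is in hand.
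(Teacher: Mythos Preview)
Your proposal is correct and follows essentially the same approach as the paper: both derive the scalar-curvature relation and the $\delta W^\pm=0$ characterization directly from Proposition~\ref{PropCurvatureIntro} (equivalently Propositions~\ref{PropRicci} and~\ref{PropDelWpm}), then specialize to $C=C_0e^{-z}$ and solve the resulting linear ODEs, invoking the extremal classification (\ref{EqnFExtrPre}) for the final claim. Your explicit treatment of the degenerate case $\mathcal{L}^\pm(F)\equiv1$ and the eigenvalue analysis of $\mathcal{L}^\pm$ spell out details the paper leaves implicit, but the route is the same.
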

	
	See Section \ref{SubsecCanonicalTable} for a table of the $U(2)$-invariant canonical metrics.
	
	\begin{theorem}
		In the $U(2)$-invariant case, the space of solutions to the $B^t$-flat equations is 7-dimensional.
		Up to homothety these constitute a 5-parameter family of metrics and the CSC $B^t$-flat metrics constitute a 4-parameter family of metrics.
		When $t\ne0,\infty$, there exist CSC $B^t$-flat metrics that are not conformal to any extremal metric.
	\end{theorem}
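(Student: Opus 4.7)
The plan is to derive the Euler-Lagrange equations of $B^t$ explicitly in the $U(2)$-ansatz, parameter-count the resulting ODE system in $(F,C)$, then analyze the CSC subcase and exhibit solutions whose $F$ fails to lie in the extremal family (\ref{EqnFExtrPre}).

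First, I would compute the variational derivative of $\int s^2\, dV_g$ on a 4-manifold, which is (up to overall sign) $T_s = 2\nabla^2 s - 2(\Delta s)\,g - 2s\,\mathrm{Ric} + \tfrac12 s^2 g$. Combined with the Bach tensor from Proposition \ref{PropCurvatureIntro}, the $B^t$-flat equation reads $\mathrm{Bach} + t\,T_s^{\mathrm{tf}} = 0$, where $(\cdot)^{\mathrm{tf}}$ is the trace-free part. Since every $U(2)$-invariant trace-free symmetric 2-tensor lies in the 2-dimensional span of $(\sigma^0)^2-(\sigma^1)^2$ and $(\sigma^0)^2+(\sigma^1)^2-(\sigma^2)^2-(\sigma^3)^2$, this EL is equivalent to two scalar ODEs in $(F,C)$. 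By Proposition \ref{PropCurvatureIntro} the Bach contribution involves four $z$-derivatives of $F$ (and none of $C$), while $T_s$ involves four derivatives of both $F$ and $C$; the two equations together form a naively 8th-order system. A contracted-Bianchi-type first integral, arising from the diffeomorphism invariance of $B^t$, reduces the effective dimension of the solution space from 8 to 7. Quotienting by the 2-dimensional homothety group ($z$-translation and metric scaling) then gives the 5-parameter moduli space.

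For the CSC subcase, setting $s = s_0$ kills the $\nabla^2 s$ and $\Delta s$ terms in $T_s$, so $T_s^{\mathrm{tf}} = -2s_0\,\cRic$ and the equation collapses to
\[
\mathrm{Bach} - 2 s_0 t\,\cRic \;=\; 0,
\]
paired with the second-order CSC constraint $s = s_0$. The constraint drops the dimension by one from 7 to 6, yielding a 4-parameter family up to homothety.

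For the final claim, the $(\sigma^0)^2-(\sigma^1)^2$ component of the CSC $B^t$-flat equation reads, from Proposition \ref{PropCurvatureIntro},
\[
\frac{16F}{3C^2}\bigl(\mathcal{L}^-(\mathcal{L}^+(F))-1\bigr) \;=\; 8 s_0 t\, F\, C^{-1/2}\Bigl(\partial_z^2 C^{-1/2} - \tfrac14 C^{-1/2}\Bigr).
\]
A metric conformal to an extremal K\"ahler one has $F$ satisfying $\mathcal{L}^+(\mathcal{L}^-(F))=1$, which makes the left-hand side vanish. For $s_0 t \neq 0$ and $F \not\equiv 0$ this forces $\partial_z^2 C^{-1/2} = \tfrac14 C^{-1/2}$, restricting $C$ to the 2-parameter family $C = (\alpha e^{z/2}+\beta e^{-z/2})^{-2}$; substituting this into the remaining component equation and the CSC constraint imposes further algebraic relations, so the ``conformally-extremal CSC $B^t$-flat'' subfamily has strictly fewer than 4 parameters. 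Since the full CSC $B^t$-flat family is 4-parameter, it must contain metrics whose $F$ is not of the form (\ref{EqnFExtrPre}), and such metrics are not conformal to any extremal metric. The main obstacle is identifying the correct first integral in the parameter count and executing the dimensional-inequality argument for the conformally-extremal subfamily, both of which rely on extended symbolic manipulation of the curvature formulas of Proposition \ref{PropCurvatureIntro}.
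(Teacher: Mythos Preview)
Your approach is essentially that of the paper: derive the $B^t$-flat equations in the ansatz, identify the diffeomorphism-invariance first integral (the paper names it $\mathcal{T}$ and proves $\partial_z\mathcal{T}$ is a combination of the other equations), parameter-count $8\to7\to5$, impose CSC to drop one more, and for the last claim show that the conformally-extremal CSC subfamily is lower-dimensional. Two points to tighten. First, the full $B^t$-flat system has a trace part $\Delta s=0$ in addition to the two trace-free components; the paper handles this by treating $s$ as a third unknown alongside $(F,C)$, and the $8$ in your count really comes from orders $4+2+2$ in $(F,C,s)$, not from two equations in $(F,C)$ alone. Second, your final step (``further algebraic relations, so strictly fewer than 4 parameters'') is where the actual work lies: the paper substitutes the extremal $F$ and the forced $C=e^{-z}/(C_5+C_6e^{-z})^2$ into the remaining equation $\mathcal{T}=0$ and into the CSC relation, and finds that for $s_0\ne0$ the metric is forced to be \emph{Einstein} (so $2$-parameter up to homothety), while $s_0=0$ gives ZSC Bach-flat ($3$-parameter up to homothety). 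Either way the conformally-extremal CSC $B^t$-flat family is at most $3$-dimensional, strictly below the full $4$-parameter CSC family.
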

	
	The $8^{th}$ order system for the $B^t$-flat metrics is complicated, but appears explicitly in Lemma \ref{LemmaUnReduced} below.
	In Section \ref{SubsecModTaubs} we discuss the ambiK\"ahler transform and create extremal metrics on $\mathbb{C}^2$, $\mathbb{C}^2\setminus\{(0,0)\}$ and $O(\pm1)$ conformal to the classic Taub-NUT and Taub-bolt metrics.

	\section{Properties of the Ansatz} \label{SecAnsatzProperties}
	
	The metric (\ref{EqnMetrInZ}), complex structures $J^{\pm}$, and $(1,1)$ forms $\omega^\pm$ are
	\begin{equation}
		\begin{aligned}
			g
			&\;=\;C\Big(\frac{1}{4F}dz^2+F(\eta^1)^2+(\eta^2)^2+(\eta^3)^2\Big) \\
			J^\pm{}
			&\;=\;\mp2F\frac{\partial}{\partial{}z}\otimes\eta^1
			\pm\frac{1}{2F}e_1\otimes{}dz
			-e_2\otimes\eta^3
			+e_3\otimes\eta^2 \\
			\omega^\pm{}
			&\;=\;g(J^\pm\cdot,\,\cdot)
			\;=\;\pm\frac12Cdz\wedge\eta^1\,+\,C\eta^2\wedge\eta^3.
		\end{aligned} \label{EqnMetsCxsKahls}
	\end{equation}
	We make three computations in this section.
	In Section \ref{SubSecCxStructs} we show the left-invariant complex structures $J_f$ are always integrable, and establish the K\"ahler condition for some \textit{right}-invariant complex structures as well.
	In Section \ref{SubsecCurvQuants} we compute the curvature tensors up through the Bach tensor.
	In Section \ref{SubsecTopology} we examine the topology and asymptotics which the $U(2)$ ansatz may produce, determining when manifold ends might be ALE, ALF, cusp-like, Einstein-like, or have curvature singularities, and we characterize the nut-like and bolt-like topology changes.

	\subsection{The complex structures} \label{SubSecCxStructs}
	
	Here we check the integrability of the left-invariant almost complex structures $J_f$, then study certain metric-compatible \textit{right}-invariant structures.
	\begin{lemma} \label{LemmaIntegrabilityF}
		Given any $f=f(z)\ne0$, the complex structure $J_f$ is integrable.
	\end{lemma}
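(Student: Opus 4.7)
\emph{Proof proposal.} The natural tool is the Newlander--Nirenberg theorem: $J_f$ is integrable if and only if the $(0,2)$-part of $d\alpha$ vanishes for every $(1,0)$-form $\alpha$, equivalently, the ideal generated by the $(1,0)$-forms is closed under exterior differentiation.

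My first step is to transfer $J_f$ to the coframe $\{dz,\eta^1,\eta^2,\eta^3\}$. Reading off from the definition gives
\[
J_f^*(dz)=-2f\,\eta^1,\quad J_f^*(\eta^1)=\tfrac{1}{2f}dz,\quad J_f^*(\eta^2)=-\eta^3,\quad J_f^*(\eta^3)=\eta^2,
\]
which simultaneously confirms $J_f^2=-I$ and exposes the $+i$-eigenforms
\[
\alpha_1 \;=\; dz+2if\,\eta^1, \qquad \alpha_2 \;=\; \eta^2+i\,\eta^3
\]
as a basis for $\Omega^{1,0}$.

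Next I would compute $d\alpha_1$ and $d\alpha_2$ and check each for a $(0,2)$-component. Since $f$ depends only on $z$,
\[
d\alpha_1 \;=\; 2if_z\,dz\wedge\eta^1 \,+\, 2if\,d\eta^1,
\]
and using $d\eta^1\propto \eta^2\wedge\eta^3$, the right side is a combination of $dz\wedge\eta^1$ and $\eta^2\wedge\eta^3$, both purely of type $(1,1)$: in the complex basis $dz\wedge\eta^1\propto \alpha_1\wedge\bar\alpha_1$ and $\eta^2\wedge\eta^3\propto \alpha_2\wedge\bar\alpha_2$. So $d\alpha_1$ carries no $\bar\alpha\wedge\bar\alpha$ component. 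For $\alpha_2$, the cyclic identities $d\eta^2\propto \eta^3\wedge\eta^1$ and $d\eta^3\propto \eta^1\wedge\eta^2$ yield $d\alpha_2 \propto \eta^1\wedge\alpha_2$; since $\eta^1$ is a real linear combination of $\alpha_1$ and $\bar\alpha_1$, this wedge splits into an $\alpha_1\wedge\alpha_2$ piece (type $(2,0)$) and an $\bar\alpha_1\wedge\alpha_2$ piece (type $(1,1)$), with no $(0,2)$ part. Newlander--Nirenberg then gives integrability.

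No step presents a real obstacle. The only structural input that matters is the assumption $f=f(z)$: this makes $df\wedge\eta^1 = f_z\,dz\wedge\eta^1$ lie in the $(1,1)$ part, whereas cross-dependence of $f$ on the $\mathbb{S}^3$ directions would introduce $\eta^j\wedge\eta^1$ terms that could contribute to the $(0,2)$-part of $d\alpha_1$. Computationally, the only required discipline is to keep the bookkeeping in the complex basis $\{\alpha_1,\alpha_2,\bar\alpha_1,\bar\alpha_2\}$ rather than reverting to the real coframe, so that the $(p,q)$-decomposition remains manifest.
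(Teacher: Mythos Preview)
Your proof is correct and is essentially the same argument as the paper's: both verify the Newlander--Nirenberg differential-ideal criterion by computing $d$ on a basis of type-$(1,0)$ (you) or type-$(0,1)$ (the paper) forms and observing that only $(2,0)$ and $(1,1)$ components appear. The paper's version is marginally slicker in that it writes $d(\eta^2-\sqrt{-1}\eta^3)=2\sqrt{-1}\,\eta^1\wedge(\eta^2-\sqrt{-1}\eta^3)$ directly, so the ideal-closure is visible without rewriting $\eta^1$ in the complex basis; otherwise the two are complex conjugates of each other.
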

	\begin{proof}
		The splitting $\bigwedge{}^1_{\mathbb{C}}=\bigwedge{}^{1,0}\oplus\bigwedge{}^{0,1}$ into $\pm\sqrt{-1}$ eigenspaces of $J_f$ gives
		\begin{equation}
			\bigwedge{}^{0,1}
			\;=\;\textit{span}{}_{\mathbb{C}}\left\{
			\frac{1}{2f}dz-\sqrt{-1}\eta^1,\;\;
			\eta^2-\sqrt{-1}\eta^3
			\right\}.
		\end{equation}
		On bases we compute
		\begin{equation}
			\begin{aligned}
				&d\left(\frac{1}{2f}dz-\sqrt{-1}\eta^1\right)
				=-2\sqrt{-1}\eta^2\wedge\eta^3
				=2\eta^2\wedge\left(\eta^2-\sqrt{-1}\eta^3\right), \\
				&d\left(\eta^2-\sqrt{-1}\eta^3\right)
				=2\eta^1\wedge\eta^3+2\sqrt{-1}\eta^1\wedge\eta^2
				=2\sqrt{-1}\eta^1\wedge\left(\eta^2-\sqrt{-1}\eta^3\right).
			\end{aligned}
		\end{equation}
		Therefore $d\bigwedge{}^{0,1}\subset\bigwedge{}^1\wedge\bigwedge{}^{0,1}=\bigwedge^{1,1}\oplus\bigwedge^{0,2}$ and we conclude that $J_f$ is integrable.
	\end{proof}
	
	\begin{lemma} \label{LemmaClosedJ}
		The complex structures $J^\pm$ are metric compatible.
		Their $(1,1)$ forms $\omega^\pm=g(J^\pm\cdot,\cdot)$ are closed if and only if $C=C_0e^{\mp{}z}$, respectively.
	\end{lemma}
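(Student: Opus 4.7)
The lemma separates into two independent claims: metric compatibility of $J^\pm$ and the characterization of when $\omega^\pm$ is closed. For metric compatibility, I would work in the $g$-orthogonal frame $\{\partial_z, e_1, e_2, e_3\}$, whose squared norms read off (\ref{EqnMetsCxsKahls}) are $C/(4F)$, $CF$, $C$, $C$. Reading $J^\pm$ from (\ref{EqnMetsCxsKahls}) on this frame gives $J^\pm \partial_z = \pm e_1/(2F)$, $J^\pm e_1 = \mp 2F\,\partial_z$, $J^\pm e_2 = e_3$, $J^\pm e_3 = -e_2$. Verifying $g(J^\pm X, J^\pm Y) = g(X,Y)$ on the six ordered basis pairs is then immediate: on the $\{e_2, e_3\}$-block $J^\pm$ is a quarter-turn of an orthonormal pair (up to the common factor $\sqrt{C}$), while on the $\{\partial_z, e_1\}$-block the reciprocal factors $\mp 2F$, $\pm 1/(2F)$ are arranged so that the differing squared norms $C/(4F)$ and $CF$ are exchanged correctly.

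For closedness I would compute $d\omega^\pm$ directly from (\ref{EqnMetsCxsKahls}). Since $C=C(z)$ forces $dC\wedge dz = 0$, the differentiated $C$ drops out of the $dz\wedge\eta^1$ piece, leaving
\begin{equation*}
d\omega^\pm \;=\; \mp\tfrac12 C\,dz\wedge d\eta^1 \;+\; C_z\,dz\wedge\eta^2\wedge\eta^3 \;+\; C\,d\eta^2\wedge\eta^3 \;-\; C\,\eta^2\wedge d\eta^3.
\end{equation*}
The structure equations give $d\eta^2 \propto \eta^1\wedge\eta^3$ and $d\eta^3 \propto \eta^1\wedge\eta^2$, so the last two terms self-annihilate after wedging with $\eta^3$ and $\eta^2$; and $d\eta^1\propto \eta^2\wedge\eta^3$, so the first term contributes a multiple of $C\,dz\wedge\eta^2\wedge\eta^3$. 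Collecting coefficients recovers $d\omega^\pm = (\pm C + C_z)\,dz\wedge\eta^2\wedge\eta^3$, as already quoted in the Introduction. Setting the right-hand side to zero yields the first-order ODE $C_z = \mp C$, whose solutions are $C = C_0 e^{\mp z}$.

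The computation is elementary; the only likely source of error is keeping sign conventions consistent among the $\pm$ in $\omega^\pm$ and $J^\pm$, the orientation of the frame $\{\eta^1,\eta^2,\eta^3\}$, and the sign in the Maurer--Cartan equations $d\eta^i = -\epsilon^i{}_{jk}\eta^j\wedge\eta^k$. Cross-checking against the final formula $d\omega^\pm = (\pm C + C_z)\,dz\wedge\eta^2\wedge\eta^3$ already quoted in the Introduction is the fastest way to pin these signs down.
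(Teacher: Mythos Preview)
Your proof is correct and follows essentially the same approach as the paper: the paper declares metric compatibility an elementary computation (which it omits) and for closedness simply cites the formula $d\omega^\pm=(\pm C+C_z)\,dz\wedge\eta^2\wedge\eta^3$ already derived in the Introduction from the Maurer--Cartan equations. You have merely spelled out both steps in more detail; one minor wording point is that the terms $C\,d\eta^2\wedge\eta^3$ and $-C\,\eta^2\wedge d\eta^3$ each vanish individually (because $\eta^i\wedge\eta^i=0$) rather than cancelling against each other.
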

	\begin{proof}
		Checking compatibility with the metric is an elementary computation (which we omit).
		From (\ref{EqnTwoKahlers}), $d\omega^\pm=0$ if and only if $C=C_0e^{\mp{}z}.$
	\end{proof}
	
	To create right-invariant complex structures and relate them to the metric (which is left-invariant) we require background coordinates.
	From the usual ``Euler coordinates'' on $\mathbb{S}^3$ comes polar coordinates on $\mathbb{R}^4\approx\mathbb{C}^2$ given by
	\begin{equation}
		(r,\psi,\theta,\varphi)\;\longmapsto\;
		\left(r\cos(\theta/2)e^{-\frac{i}{2}(\psi+\varphi)},\;
		r\sin(\theta/2)e^{-\frac{i}{2}(\psi-\varphi)}\right).
		\label{EqnCoordDefs}
	\end{equation}
	The coordinates $(\psi,\theta,\varphi)$, known historically as \textit{procession}, \textit{nutation}, and \textit{rotation}, have ranges $|\psi\pm\varphi|<2\pi$ and $\theta\in[0,\pi]$.
	The transitions between the coordinate coframe and left-invariant coframing $dz,\eta^1,\eta^2,\eta^3$ are
	\begin{equation}
		\small
		\begin{array}{ll}
			\eta^0=dz\;=\;\frac{\sqrt{F}}{2\sqrt{C}}dr & e_0=\frac{\partial}{dz}\;=\;\frac{\sqrt{F}}{2\sqrt{C}}\frac{\partial}{\partial{}r} \\
			\eta^1=\frac12(d\psi+\cos\theta\,d\varphi) & e_1=2\frac{\partial}{\partial\psi} \\
			\eta^2=\frac12(\sin\psi\,d\theta-\cos\psi\sin\theta\,d\varphi) & e_2=2\left(\cos\psi\cot\theta\frac{\partial}{\partial\psi}+\sin\psi\frac{\partial}{\partial\theta}-\cos\psi\csc\theta\frac{\partial}{\partial\varphi}\right)\\
			\eta^3=\frac12\left(\cos\psi\,d\theta+\sin\psi\sin\theta\,d\varphi\right) & e_3=2\left(
			\sin\psi\cot\theta\frac{\partial}{\partial\psi}+\cos\psi\frac{\partial}{\partial\theta}+\sin\psi\csc\theta\frac{\partial}{\partial\varphi}\right).
		\end{array}
		\label{EqnFrTransLeft}
	\end{equation}
	To create the right-invariant frames we apply quaterionic conjugation $T(z,w)=(\bar{z},-w)$ to $\mathbb{C}^2$, which changes the parameterization to
	\begin{equation}
		(r,\psi,\theta,\varphi)\;\longmapsto\;
		\left(r\cos(\theta/2)e^{\frac{i}{2}(\varphi+\psi)},\;
		-r\sin(\theta/2)e^{\frac{i}{2}(\varphi-\psi)}\right).
		\label{EqnRCoordDefs}
	\end{equation}
	In coordinates $T(r,\psi,\theta,\varphi)=(r,-\varphi,-\theta,-\psi)$.
	The left-invariant forms $\eta^i$ pull back to right-invariant forms $\bar\eta^i=T^*(\eta^i)$, and their dual vector fields are $T_*(\bar{e}_i)=e_i$.
	Under this pullback,
	\begin{equation}
		\small
		\begin{array}{ll}
			\bar\eta^0=dz\;=\;\frac{\sqrt{F}}{2\sqrt{C}}dr & \bar{e}_0=\frac{\partial}{dz}\;=\;\frac{\sqrt{F}}{2\sqrt{C}}\frac{\partial}{\partial{}r} \\
			\bar\eta^1=-\frac12(d\varphi+\cos\theta\,d\psi) & \bar{e}_1=-2\frac{\partial}{\partial\varphi} \\
			\bar\eta^2=\frac12(\sin\varphi\,d\theta-\cos\varphi\sin\theta\,d\psi) & \bar{e}_2=2\left(\cos\psi\csc\theta\frac{\partial}{\partial\varphi}+\sin\varphi\frac{\partial}{\partial\theta}-\cos\varphi\csc\theta\frac{\partial}{\partial\psi}\right)\\
			\bar\eta^3=-\frac{1}{2}\left(\cos\varphi\,d\theta+\sin\varphi\sin\theta\,d\psi\right) & \bar{e}_3=-2\left(
			\sin\varphi\cot\theta\frac{\partial}{\partial\varphi}+\cos\varphi\frac{\partial}{\partial\theta}+\sin\varphi\csc\theta\frac{\partial}{\partial\psi}\right).
		\end{array}
		\label{EqnFrTransRight}
	\end{equation}
	In the $\{\eta^i\}$, $\{\bar\eta^i\}$ bases, the map $T^*:\bigwedge^1\rightarrow\bigwedge^1$ giving $\bar\eta^i=T^*(\eta^i)$ is the matrix
	\begin{equation}
		\small
		T^*=\left(\begin{array}{cccc}
			1 & 0 & 0 & 0 \\
			0 & -\cos\theta & \cos\psi\sin\theta & -\sin\psi\sin\theta \\
			0 & \hspace{-0.05in}-\sin\theta\cos\varphi & -\cos\psi\cos\theta\cos\varphi +\sin\psi\sin\varphi & \sin\psi\cos\theta\cos\varphi+\cos\psi\sin\varphi\hspace{-0.05in} \\
			0 & \hspace{-0.05in}-\sin\theta\sin\varphi & -\cos\psi\cos\theta\sin\varphi-\sin\psi\cos\varphi & \sin\psi\cos\theta\sin\varphi-\cos\psi\cos\varphi\hspace{-0.05in}
		\end{array}\right). \label{EqnRightChoB}
	\end{equation}
	One may check directly that $T^*\in{}SO(4)$.
	Let $\sigma^i$ be the unit length forms
	\begin{equation}
		\sigma^0=\sqrt{\frac{C}{4F}}dz, \quad
		\sigma^1=\sqrt{CF}\eta^1, \quad
		\sigma^2=\sqrt{C}\eta^2, \quad
		\sigma^3=\sqrt{C}\eta^3 \label{EqnUnitForms}
	\end{equation}
	and let $\{f_0,f_1,f_2,f_3\}$ be the corresponding left-invariant frame, so $f_i=\frac{1}{\vert{}e_i\vert}e_i$.
	The left-invariant structures $J^\pm$, from above, can be expressed
	\begin{equation}
		\begin{aligned}
			&J^\pm\;=\;\mp{}f_0\otimes{}\sigma^1\pm{}f_1\otimes{}\sigma^0-f_2\otimes{}\sigma^3+f_3\otimes{}\sigma^2.
		\end{aligned}
	\end{equation}
	Under $T$ these are conjugate to the \textit{right}-invariant complex structures we call $I^-=T_*\circ{}J^+\circ{}T_*$ and $I^+=T_*\circ{}J^-\circ{}T_*$.
	Because $I^\mp$ are isomorphic to $J^\pm$ under a diffeomorphism on $M^4$ (the antipodal map on the $\mathbb{S}^3$ factor), $I^+$ and $I^-$ are integrable.
	We summarize this in the following lemma.
	\begin{lemma} \label{LemmaRighInvStructs}
		The structures $I^\pm$ are integrable, right-invariant, and $g$-compatible.
		The structures $J^+,I^+$ produce a common orientation, with corresponding $(1,1)$-forms $\omega^+,\omega^+_I\in\bigwedge^+$.
		Similarly $J^-,I^-$ produce a common orientation, and $\omega^-,\omega^-_I\in\bigwedge^-$.
		\qed
	\end{lemma}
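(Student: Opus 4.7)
My plan is to verify the four assertions in turn --- integrability, right-invariance, $g$-compatibility, and the orientation/self-duality claim --- exploiting the defining relation $I^\pm = T_* \circ J^\mp \circ T_*$ and that $T$ is an involutive diffeomorphism on $M^4$.

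The easy cases come first. Integrability of $I^\mp$ is inherited from $J^\pm = J_{\pm F}$ (integrable by Lemma \ref{LemmaIntegrabilityF}) since pushforward by a diffeomorphism preserves the vanishing of the Nijenhuis tensor. Right-invariance is read off from (\ref{EqnFrTransLeft}) and (\ref{EqnFrTransRight}): these give $T^*\eta^i = \bar\eta^i$ and $T_* e_i = \bar e_i$ at corresponding points, so conjugation by $T_*$ converts any left-invariant combination of simple tensors $e_i \otimes \eta^j$ into the analogous right-invariant combination $\bar e_i \otimes \bar\eta^j$.

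For the orientation claim, I would first observe that $T$ is orientation-reversing on $M^4$ --- this may be verified either from the determinant of the $3\times 3$ lower block of (\ref{EqnRightChoB}), or directly from $T(x_1,y_1,x_2,y_2) = (x_1, -y_1, -x_2, -y_2)$ having Jacobian determinant $-1$ on $\mathbb{R}^4$. Since $J^+$ and $J^-$ induce opposite orientations on $M$, conjugation by an orientation-reversing diffeomorphism flips the induced orientation of the complex structure, so $I^+ = T_* J^- T_*$ shares its orientation with $J^+$, and $I^- = T_* J^+ T_*$ shares its orientation with $J^-$. Once $g$-compatibility is established, the $(1,1)$-forms $\omega^\pm_I$ land in $\bigwedge^\pm$ as claimed, since the $(1,1)$-form of any $g$-compatible almost complex structure is self-dual in the induced orientation.

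The main technical step is $g$-compatibility of $I^\pm$. The subtlety here is that $T$ is not a $g$-isometry in general --- a direct computation gives $T^*g - g = (B-C)\bigl((\bar\eta^1)^2 - (\eta^1)^2\bigr)$, nonzero whenever $B \neq C$ --- so $g$-orthogonality does not transfer through the conjugation for free. I would proceed by a direct point-by-point verification in the $g$-orthonormal frame $\{f_0, f_1, f_2, f_3\}$: using the pushforwards $T_* e_i = \bar e_i$, write out the resulting endomorphism for $I^\pm$ at each point and check that it is orthogonal. A potentially cleaner route, if the algebra cooperates, is to compute $\omega^\pm_I = g(I^\pm \cdot, \cdot)$ directly in the right-invariant frame and verify simultaneously that it is antisymmetric (equivalent to $I^\pm$ being $g$-skew-adjoint, hence $g$-compatible) and self-dual of norm $\sqrt{2}$; this would place $I^\pm$ as a unit element of the twistor sphere inside $\bigwedge^\pm$, settling compatibility and self-duality of $\omega^\pm_I$ in one stroke.
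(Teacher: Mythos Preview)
Your outline is sound on integrability, right-invariance, and orientation, and is in fact more detailed than the paper, which offers no proof beyond the preceding discussion and a \qed. The paper's text explicitly justifies only integrability (via the diffeomorphism $T$) and never addresses $g$-compatibility, so you are right to flag that as the nontrivial step and to note that $T$ is not a $g$-isometry.

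However, you should be warned that your proposed direct verification of $g$-compatibility will not succeed for a general $U(2)$-invariant metric. For example, $I^-\bar e_1 = -2F\,\partial_z$ has $g$-norm squared $CF$, whereas $\bar e_1 = -2\partial_\varphi$ has $g$-norm squared $C(F\cos^2\theta+\sin^2\theta)$; these agree only when $F\equiv 1$ or $\sin\theta=0$. So the structures $I^\pm=T_*\circ J^\mp\circ T_*$, taken literally, are \emph{not} $g$-orthogonal for arbitrary $F$, and the lemma as stated is too strong. The paper's subsequent uses of $I^\pm$ (Propositions~\ref{PropIntegrability} and~\ref{EqnTwoRFEMetrics}) concern only the hyperK\"ahler metrics, where the condition does hold. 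Your careful approach is exactly what exposes this gap; the issue lies in the paper's statement (or its informal definition of $I^\pm$), not in your strategy.
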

	
	The complex structures $J^+$, $J^-$ produce a very flexible array of possible K\"ahler metrics, as $F$ may be chosen freely and only $C$ is constrained.
	By contrast, the K\"ahler condition on the $\omega^\pm_I$ is far more restrictive.
	This is because the left-action of $SU(2)$ fixes $g$ but permutes $I^\pm$ among an $\mathbb{S}^2$ worth of complex structures, which in turn means that $d\omega_I^\pm=0$ forces $\omega_I^\pm$ to be not just K\"ahler but a K\"ahler representative in a hyperK\"ahler structure.
	In particular the K\"ahler condition forces $\Ric=0$.
	\begin{proposition} \label{PropIntegrability}
		Letting $\omega_I^-=g(I^-\cdot,\cdot)$, then $d\omega_I^-=0$ if and only if
		\begin{equation}
			F\;=\;\left(1+C_1e^{z}\right)^2
			\quad\text{and}\quad
			C\;=\;
			\frac{C_0e^{z}}{\left(1+C_1e^{z}\right)^2}. \label{EqnHyperKCoefs}
		\end{equation}
		In this case the metric $g$ is Ricci-flat.
		Replacing $z$ by $-z$ (\ref{EqnHyperKCoefs}), the same is true for $I^+$.
	\end{proposition}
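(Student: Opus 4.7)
The plan has two parts: deriving the explicit form of $F$ and $C$ by direct computation of $d\omega_I^-=0$, and confirming Ricci-flatness via a symmetry argument.

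For the explicit form, I would use the formula $I^- = T_* \circ J^+ \circ T_*$ together with the $SO(4)$-matrix $T^*$ of (\ref{EqnRightChoB}) and the $g$-compatibility from Lemma \ref{LemmaRighInvStructs} to write $\omega_I^-=g(I^-\cdot,\cdot)$ as an explicit 2-form in the frame $\{dz,\eta^1,\eta^2,\eta^3\}$. Its coefficients will be products of radial quantities (built from $F$ and $C$) with trigonometric expressions in the Euler coordinates $(\psi,\theta,\varphi)$ arising from the entries of $T^*$. Applying $d$ via $d\eta^i=-\epsilon^i{}_{jk}\eta^j\wedge\eta^k$ and the coordinate derivatives of the angular factors then yields $d\omega_I^-$ as an explicit 3-form. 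The condition $d\omega_I^-=0$ must hold identically in $(\psi,\theta,\varphi)$, so the coefficients of independent angular basis 3-forms must vanish separately, producing ODEs in $F$ and $C$ alone. After simplification these integrate to $F=(1+C_1 e^z)^2$ and $C=C_0 e^z/(1+C_1 e^z)^2$, modulo the ansatz's freedoms to translate $z$ and rescale the overall metric.

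For Ricci-flatness, the cleanest route is the hyperK\"ahler observation indicated in the paragraph preceding the proposition. The left $SU(2)$-action consists of isometries of $g$ and acts transitively on the $S^2$ of right-invariant $g$-compatible complex structures (via the adjoint action on the unit sphere in $\mathfrak{su}(2)$). Pullback by an isometry commutes with $d$, so closedness of $\omega_I^-$ propagates to the whole orbit, yielding an $S^2$-family of $g$-compatible closed K\"ahler forms and hence a hyperK\"ahler structure on $g$; every hyperK\"ahler 4-manifold is Ricci-flat. As an independent sanity check, one may substitute (\ref{EqnHyperKCoefs}) into the Einstein characterization (\ref{EqnEinstPre}): the matching parameters (with superscripts $E$ to distinguish from those of the current proposition) $C_1^E=C_2^E=0$, $C_3^E=2C_1$, $C_4^E=2C_1^2$, $C_5^E=C_1/\sqrt{C_0}$, $C_6^E=1/\sqrt{C_0}$ satisfy both Einstein constraints and give scalar curvature zero.

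The main obstacle is the explicit computation: the interaction of the left-invariant metric with the right-invariant complex structure $I^-$ forces both radial and angular dependence in every fixed frame, making the expansion of $\omega_I^-$ and of $d\omega_I^-$ an intricate bookkeeping exercise. Once that is in hand, the separation by angular basis 3-forms and integration of the resulting ODEs should be routine, and the companion claim for $I^+$ follows from the evident $z\mapsto -z$ symmetry of the ansatz.
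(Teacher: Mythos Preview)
Your proposal is correct and follows essentially the same approach as the paper: compute $\omega_I^-$ explicitly via the matrix $T^*$, differentiate, separate the resulting equation by its independent angular factors to extract a decoupled ODE system for $F$ and $C$, and then invoke the hyperK\"ahler implication (or the Einstein characterization) for Ricci-flatness. The paper's only cosmetic difference is that it records $*d\omega_I^-$ rather than $d\omega_I^-$ before reading off the ODEs, and it states the decoupled system $\partial_z F^{1/2}=F^{1/2}-1$, $\partial_z\log C=-1+2F^{-1/2}$ explicitly before integrating.
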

	\begin{proof}
		We may compute $d\omega_I^-$ explicitly using the matrices for $T^*$ in (\ref{EqnRightChoB}) and its inverse-transpose $T_*$.
		The computation is tedious but completely elementary, and works out to be
		\begin{equation}
			\begin{aligned}
				*d\omega_1
				&\;=\;
				\frac{2}{\sqrt{C}}\Bigg(
				\cos\theta\left((-2+F^{\frac12})+F^{\frac12}\frac{\partial}{\partial{}z}\log\,C\right)
				\eta^1 \\
				&\quad\quad\quad
				-F^{-\frac12}\sin\theta\cos\psi		\left(2F^{\frac12}-2F\frac{\partial}{\partial{}z}\log\,C-\frac{\partial}{\partial{}z}F\right)\eta^2 \\
				&\quad\quad\quad
				-F^{-\frac12}\sin\theta\sin\psi
				\left(2F^{\frac12}-2F\frac{\partial}{\partial{}z}\log\,C-\frac{\partial}{\partial{}z}F\right)\eta^3\Bigg).
			\end{aligned}
		\end{equation}
		Setting this to zero gives the partially decoupled system
		\begin{equation}
			\begin{aligned}
				&\frac{\partial}{\partial{}z}F^{\frac12}\;=\;\left(-1+F^{\frac12}\right),
				\quad
				\frac{\partial}{\partial{}z}\log\,C\;=\;\left(-1+2F^{-\frac12}\right)
			\end{aligned}
		\end{equation}
		which has general solution $F=\left(1+C_1e^{z}\right)^2$, $C=\frac{C_0e^{z}}{\left(1+C_1e^{z}\right)^2}$.
		Ricci-flatness follows from the general fact (see \cite{Besse}) that any hyperK\"ahler metric is Ricci flat, or, more explicitly, from Proposition \ref{PropEinstCondition} below.
	\end{proof}
	
	Proposition \ref{PropIntegrability} gives a two parameter family of solutions.
	Therefore up to homothety we have, not a 2-parameter family, but exactly two metrics.
	\begin{proposition} \label{EqnTwoRFEMetrics}
		Up to homothety, there are exactly two metrics $g$ of the form (\ref{EqnMetrInZ}) for which $I^-$ is a K\"ahler structure.
		The first is
		\begin{equation}
			F\;=\;\left(1-e^z\right)^2 \quad and \quad
			C\;=\;\frac{e^z}{(1-e^z)^2}
		\end{equation}
		which on $z\in(0,\infty]$ is the classic Taub-NUT metric.
		It has an ALF end at $z=0$ and a nut at $z=+\infty$.
		The second is
		\begin{equation}
			F\;=\;\left(1+e^z\right)^2 \quad and \quad
			C\;=\;\frac{e^z}{(1+e^z)^2}. \label{EqnSuperTaubNUT}
		\end{equation}
		This has a nut at $z=-\infty$ and curvature singularity at $z=+\infty$.
	\end{proposition}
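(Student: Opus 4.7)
The plan is to start from Proposition \ref{PropIntegrability}, which furnishes the two-parameter family
\[F(z) = (1+C_1 e^z)^2, \qquad C(z) = \frac{C_0 e^z}{(1+C_1 e^z)^2},\]
and then quotient by the two-dimensional homothety group. That group is generated by translations $z \mapsto z + z_0$ and by overall rescalings $g \mapsto \lambda^2 g$; a direct substitution shows these act on the parameters by $(C_0, C_1) \mapsto (C_0 e^{z_0}, C_1 e^{z_0})$ and $(C_0, C_1) \mapsto (\lambda^2 C_0, C_1)$, respectively.

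Assuming $C_1 \neq 0$, I would first use a translation to normalize $|C_1| = 1$ (take $z_0 = -\log|C_1|$), then an overall rescaling of $g$ to absorb the resulting $C_0$ into an unimportant metric factor. The only invariant that survives is $\mathrm{sign}(C_1)$, producing exactly the two representatives
\[F = (1 \pm e^z)^2, \qquad C = \frac{e^z}{(1 \pm e^z)^2}\]
appearing in the statement. The degenerate case $C_1 = 0$ collapses to flat $\mathbb{R}^4$ (since then $F \equiv 1$ and $C = C_0 e^z$) and is tacitly excluded from the count.

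To identify the $C_1 = -1$ solution with the classical Taub-NUT, I would change variables from $z$ back to a standard radial coordinate $r$ via $dz = (2\sqrt{AB}/C)\,dr$, and match the resulting $A, B, C$ to the standard Taub-NUT presentation (with $V^{-1}$ coefficient on a collapsing Hopf circle and $V$ coefficient on a Euclidean $\mathbb{R}^3$ factor). The asymptotic/topological structure of each solution then follows by inspecting the limits of $B = CF$ and $C$ at the endpoints of $z$. For $C_1 = -1$ one computes $B = e^z$, so $B \to 1$ and $C \to \infty$ as $z \to 0^+$, matching the ALF criterion (asymptotic circle over flat $\mathbb{R}^3$), while $B \to \infty$, $C \to 0$ as $z \to +\infty$ signals a smooth $\mathbb{S}^3$ collapse, i.e.\ a nut. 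For $C_1 = +1$ one has $F \to 1$ and $C \to 0$ as $z \to -\infty$, again giving a smooth nut, whereas at $z \to +\infty$ the formula $W^\pm = -C^{-1}(\mathcal{L}^\pm(F) - 1)(\cdots)$ from Proposition \ref{PropCurvatureIntro} is unbounded (since $C \to 0$ while the factor $\mathcal{L}^\pm(F) - 1$ does not decay fast enough), confirming a curvature singularity.

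The main obstacle is the explicit change-of-variables matching the $C_1 = -1$ metric with the textbook Taub-NUT form, including pinning down the mass parameter and the size of the asymptotic circle; the moduli count and the qualitative end analysis are short algebraic/limit computations once the reduction is in place.
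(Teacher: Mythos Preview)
Your approach matches the paper's. The paper's argument is just the one sentence preceding the statement (``Proposition \ref{PropIntegrability} gives a two parameter family of solutions. Therefore up to homothety we have\dots exactly two metrics''), together with forward references for the Taub-NUT identification (Appendix \ref{SecClassic}), the nut/ALF structure (\S\ref{SubsecTopology}), and the curvature singularity (the explicit computation $|W^+|^2=384(-1+e^z)^6$ from (\ref{EqnWeylNorm})). You spell out the homothety action on $(C_0,C_1)$ and the flat degeneration at $C_1=0$ more carefully than the paper does; the moduli count is correct.

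One slip in your end analysis: for $C_1=-1$ you compute $B=CF=e^z$ and then assert that ``$B\to\infty$, $C\to0$ as $z\to+\infty$ signals a smooth $\mathbb{S}^3$ collapse, i.e.\ a nut.'' A nut requires the \emph{entire} $\mathbb{S}^3$ to shrink, so both $C$ and $CF$ must tend to zero; $CF=e^z\to\infty$ is the opposite, and indeed $|W^-|^2=384(e^z-1)^6\to\infty$ there. The genuine nut for this $(F,C)$ sits at $z\to-\infty$, where $F\to1$ and $C,CF\to0$. (The paper's own stated domain $(0,\infty]$ in Proposition \ref{EqnTwoRFEMetrics} is inconsistent with the $e^{-z}$ version it uses in \S\ref{SubsecModTaubs} and Appendix \ref{SecClassic}, so the confusion is not entirely yours.)
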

	For information about the Taub-NUT metric see Section \ref{SubsecModTaubs}.
	For an analysis of the nut-like topological change see \S\ref{SubSubsecBoltsNuts} and for ALF ends see \S\ref{SubSubSecALEALFAndCusp}.
	To verify the claim that (\ref{EqnSuperTaubNUT}) has a curvature singularity as $z\rightarrow+\infty$, one may use (\ref{EqnWeylNorm}) below to find $\vert{}W^+\vert^2=384(-1+e^z)^6$.

	\subsection{Curvature quantities} \label{SubsecCurvQuants}
	
	A useful computational tool comes from placing the metric (\ref{EqnMetsCxsKahls}) into LeBrun ansatz form \cite{LeB91}.
	Referring to the polar coordinates of (\ref{EqnCoordDefs}), from $(r,\varphi,\theta,\psi)$ we change to $(Z,\tau,x,y)$ where $x=\log\tan\frac{\theta}{2}$, $y=\varphi$, $\tau=\psi$, and $Z$ solves $dZ=\frac14Cdz$.
	Then $(\eta^2)^2+(\eta^3)^2=\frac14(d\theta^2+\sin^2\theta\,d\varphi^2)=\frac{1}{4\cosh^2x}(dx^2+dy^2)$ and
	\begin{equation}
		g\;=\;
		\frac{C}{4\cosh^2x}(dx^2+dy^2)\;+\;\frac{FC}{4}\left(d\tau-\tanh(x)dy\right)^2+\frac{4}{FC}dZ^2. \label{EqnLeBrunForm}
	\end{equation}
	Written this way, the metric (\ref{EqnLeBrunForm}) is precisely in the form of Proposition 1 of \cite{LeB91}---the LeBrun ansatz---where $w=\frac{4}{FC}$ and $e^u=\frac{FC^2}{16\cosh^2x}$.
	The complex structures in these coordinates are
	\begin{equation}
		J^\pm(dZ)=\mp2FC\eta^1, \quad
		J^\pm(dx)=-dy \label{EqnJStructsLeBrun}
	\end{equation}
	and we record the useful fact that $\eta^2\wedge\eta^3=\frac{1}{4\cosh^2(x)}dx\wedge{}dy$.
	
	\begin{proposition}[Ricci Curvature in the K\"ahler case] \label{PropRicForm}
		If $g$ is K\"ahler with respect to $J^+$, its Ricci form $\rho=Ric(J\cdot,\cdot)$ and scalar curvature are
		\begin{eqnarray}
			\rho
			&=&
			-\frac{2}{C}\left(\mathcal{L}^+(F)-1\right)\omega^+
			-
			\frac{2}{C}\left[
			\left(-\frac12\frac{\partial}{\partial{}z}+1\right)
			\left(\frac{\partial}{\partial{}z}+1\right)F
			-1\right]\omega^-, \\ \label{EqnRhoSeparation}
			s
			&=&-\frac{8}{C}\left(\mathcal{L}^+(F)-1\right). \label{EqnScalCurvComp}
		\end{eqnarray}
	\end{proposition}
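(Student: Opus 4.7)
\textbf{Proof plan for Proposition \ref{PropRicForm}.}

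My first step is to derive the scalar curvature formula by specializing the general formula from Proposition \ref{PropCurvatureIntro} to the K\"ahler case $C=C_0e^{-z}$. With this choice, $C^{1/2}=C_0^{1/2}e^{-z/2}$ gives $\partial_z C^{1/2}=-\tfrac12 C^{1/2}$, so the term $24C^{-3/2}\partial_z(F\,\partial_zC^{1/2})$ reduces to $C^{-1}(-12F_z+6F)$. Combining with the other term and regrouping yields $s=-\tfrac{4}{C}(F_{zz}-3F_z+2F-2)$. A direct expansion of $\mathcal{L}^+(F)=(-\tfrac12\partial_z+1)(-\partial_z+1)F=\tfrac12F_{zz}-\tfrac32F_z+F$ shows that this equals $-\tfrac{8}{C}(\mathcal{L}^+(F)-1)$, giving (\ref{EqnScalCurvComp}).

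For the Ricci form, I will use the standard decomposition for K\"ahler 4-manifolds $\rho = \tfrac{s}{4}\omega^+ + \tilde\rho_0$, where the self-dual piece is fixed by $s$ and $\tilde\rho_0=\cRic(J^+\cdot,\cdot)$ is the primitive (anti-self-dual) piece. The self-dual coefficient is immediate from step 1: $\tfrac{s}{4}=-\tfrac{2}{C}(\mathcal{L}^+(F)-1)$, giving the $\omega^+$ term in the proposition. To handle $\tilde\rho_0$, I first observe that under the K\"ahler condition the coefficient $a=4FC^{-1/2}(\partial_z^2C^{-1/2}-\tfrac14C^{-1/2})$ of the $((\sigma^0)^2-(\sigma^1)^2)$ summand in (\ref{EqnTfRicPre}) vanishes identically, because $C^{-1/2}=C_0^{-1/2}e^{z/2}$ satisfies $\partial_z^2C^{-1/2}=\tfrac14 C^{-1/2}$. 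Thus $\cRic$ is a multiple, say $b$, of the $J^+$-invariant tensor $(\sigma^0)^2+(\sigma^1)^2-(\sigma^2)^2-(\sigma^3)^2$, which is necessary anyway since Ricci must be $J^+$-invariant when $g$ is K\"ahler.

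From the eigenvalue pattern of $b$ on the $J^+$-invariant splitting $\operatorname{span}(f_0,f_1)\oplus\operatorname{span}(f_2,f_3)$, a short calculation using $J^+f_0=f_1$ and $J^+f_2=f_3$ gives $\tilde\rho_0=b(\sigma^0\wedge\sigma^1-\sigma^2\wedge\sigma^3)=-b\,\omega^-$, where in the final step I used $\omega^-=-\sigma^0\wedge\sigma^1+\sigma^2\wedge\sigma^3$ from the orthonormal frame. Finally, I will compute $b$ by substituting $C=C_0e^{-z}$ into the second line of (\ref{EqnTfRicPre}): using $\partial_z C^{-1/2}=\tfrac12 C^{-1/2}$ one finds $b=\tfrac{2}{C}(-\tfrac12F_{zz}+\tfrac12F_z+F-1)$, and expansion of $(-\tfrac12\partial_z+1)(\partial_z+1)F=-\tfrac12F_{zz}+\tfrac12F_z+F$ identifies this with $\tfrac{2}{C}\bigl[(-\tfrac12\partial_z+1)(\partial_z+1)F-1\bigr]$, yielding the $\omega^-$ term.

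The only subtle point (and the main thing to verify carefully) is the orientation and sign bookkeeping in identifying $\tilde\rho_0$ with a multiple of $\omega^-$; the rest is substitution and algebraic rearrangement, both building directly on the general formulas of Proposition \ref{PropCurvatureIntro}. Alternatively, one can cross-check by rewriting $g$ in LeBrun ansatz form (\ref{EqnLeBrunForm}) and applying LeBrun's Ricci-form identity in the K\"ahler setting, which produces the same two coefficients.
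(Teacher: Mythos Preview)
Your computations are correct and the sign/orientation bookkeeping checks out, but your route is genuinely different from the paper's, and in the paper's logical order it is circular. The paper proves Proposition~\ref{PropRicForm} \emph{first}, by placing $g$ in the LeBrun ansatz form (\ref{EqnLeBrunForm}) and computing $\rho=-i\partial\bar\partial u=\tfrac12 d(J\,du)$ directly with $u=\log(FC^2)-\log(16\cosh^2 x)$; only \emph{afterwards} does it conformally change from the K\"ahler metric to obtain the general scalar curvature and trace-free Ricci formulas (Proposition~\ref{PropRicci}, which is the content you are citing from Proposition~\ref{PropCurvatureIntro}). So by invoking (\ref{EqnTfRicPre}) and the general $s$ to deduce the K\"ahler case, you are assuming exactly what the paper uses the K\"ahler case to establish.

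That said, your method is a perfectly clean argument \emph{provided} the general formulas are obtained independently (e.g.\ by a direct moving-frame computation of $\Ric$ for (\ref{EqnMetrInZ}) that does not pass through the K\"ahler case). What you gain is conceptual: the decomposition $\rho=\tfrac{s}{4}\omega^{+}+\cRic(J^{+}\cdot,\cdot)$ makes transparent why the $\omega^{+}$ coefficient is governed by $\mathcal{L}^{+}$ and why the $\omega^{-}$ coefficient comes from the $J^{+}$-invariant piece of $\cRic$. What the paper's approach gains is self-containment and logical economy: the LeBrun formula gives $\rho$ in one stroke with no prior curvature computation needed, and the general case then follows by conformal change. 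Your closing remark about cross-checking via the LeBrun ansatz is in fact the paper's primary argument, not a secondary one.
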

	\begin{proof}
		Setting $C=C_0e^{-z}$ we follow the computation in \cite{LeB91}.
		From that paper, the Ricci form is $\rho=-i\partial\bar\partial{}u=\frac12d(Jdu)$ where in our case $u=\log(FC^2)-\log(16\text{cosh}^2(x))$, as we found in (\ref{EqnLeBrunForm}).
		Using coordinates $(z,\tau,x,y)$ (specifically using $z$, not $Z$ from (\ref{EqnLeBrunForm})), we have $J(dz)=-2F\eta^1$ and $J(dx)=-dy$ from (\ref{EqnJStructs}) and (\ref{EqnJStructsLeBrun}).
		Using also $dx\wedge{}dy=4\cosh^2(x)\eta^2\wedge\eta^3$ and $d\eta^1=-2\eta^2\wedge\eta^3$,
		\begin{equation}
			\begin{aligned}
				&u\;=\;\log{}F-2z+2\log{}C_0-2\log(4\cosh\,x) \\
				&du\;=\;(F_zF^{-1}-2)dz\,-\,2\tanh(x)dx \\
				&Jdu\;=\;(-2F_z+4F)\eta^1\,+\,2\tanh(x)dy \\
				&dJdu=(-2F_{zz}+4F_z)dz\wedge\eta^1+(-4F_z-8F+8)\eta^2\wedge\eta^3
			\end{aligned} \label{EqnRicFormComp}
		\end{equation}
		From (\ref{EqnMetsCxsKahls}), $dz\wedge\eta^1=C^{-1}(\omega^+-\omega^-)$ and $\eta^2\wedge\eta^3=\frac12C^{-1}(\omega^++\omega^-)$.
		Therefore
		\begin{equation}
			\footnotesize
			\rho
			=
			\frac{2}{C}\left(-\frac12F_{zz}+\frac32F_z-F+1\right)\omega^+
			+\frac{2}{C}\left(\frac12F_{zz}-\frac12F_z-F+1\right)\omega^-
		\end{equation}
		as claimed.
		Scalar curvature for any K\"ahler metric is $s=2*(\omega^+\wedge\rho)$, so (\ref{EqnRhoSeparation}) along with the facts $\omega^+\wedge\omega^-=0$ and $*(\omega^+\wedge\omega^+)=2$ gives (\ref{EqnScalCurvComp}).
	\end{proof}
	
	\begin{proposition}[Ricci curvature, general case] \label{PropRicci}
		Scalar curvature is
		\begin{equation}
			s\;=\;-4C^{-1}\left(\frac{\partial^2F}{\partial{z}^2}+\frac12F-2\right)
			-24C^{-\frac32}\frac{\partial}{\partial{}z}\left(F\frac{\partial}{\partial{}z}C^{\frac12}\right). \label{EqnScalGeneral}
		\end{equation}
		Using the unit frames $\sigma^i$ of (\ref{EqnUnitForms}) the trace-free Ricci curvature is
		\begin{equation}
			\begin{aligned}
				\cRic
				&\;=\;4FC^{-\frac12}\left(\frac{\partial^2}{\partial{}z^2}C^{-\frac12}-\frac14C^{-\frac12}\right)\cdot\left((\sigma^0)^2-(\sigma^1)^2\right) \\
				&\quad+2\left(
				C^{-\frac12}\frac{\partial}{\partial{}z}\left(F\frac{\partial}{\partial{}z}C^{-\frac12}\right)
				-C^{-1}\left(\frac12\frac{\partial^2F}{\partial{}z^2}-\frac34F+1\right)
				\right)
				\cdot\left((\sigma^0)^2+(\sigma^1)^2-(\sigma^2)^2-(\sigma^3)^2\right).
			\end{aligned} \label{EqnTfRic}
		\end{equation}
	\end{proposition}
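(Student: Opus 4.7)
The plan is to compute directly via Cartan's structure equations in the orthonormal coframe $\{\sigma^0,\sigma^1,\sigma^2,\sigma^3\}$ of (\ref{EqnUnitForms}), extracting only the contractions needed to assemble $\cRic$ and $s$ rather than building the full Riemann tensor. It should be convenient to introduce $h = C^{-1/2}$ at the outset, since the target formulas are naturally expressed in $h$, $h'$, $h''$ and $F$, so the final reassembly step can match the displayed form directly.

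The first step is to compute $d\sigma^i$. Using the Maurer--Cartan relations $d\eta^i = -\epsilon^i{}_{jk}\eta^j\wedge\eta^k$ together with the basis identities $dz\wedge\eta^1 = \tfrac{2}{C}\sigma^0\wedge\sigma^1$, $dz\wedge\eta^i = \tfrac{2\sqrt{F}}{C}\sigma^0\wedge\sigma^i$ for $i\in\{2,3\}$, $\eta^2\wedge\eta^3 = C^{-1}\sigma^2\wedge\sigma^3$, and $\eta^1\wedge\eta^i = (C\sqrt{F})^{-1}\sigma^1\wedge\sigma^i$, each $d\sigma^i$ emerges as a linear combination of $\sigma^0\wedge\sigma^i$ and $\sigma^j\wedge\sigma^k$ with coefficients depending only on $z$. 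The second step is to solve the first structure equations $d\sigma^i + \omega^i{}_j\wedge\sigma^j = 0$ with $\omega^i{}_j = -\omega^j{}_i$. The $U(2)$ symmetry forces each connection 1-form $\omega^i{}_j$ to be a single $\sigma^k$ multiplied by a function of $z$ alone, so the six independent connection forms can be read off immediately from the first step.

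The third step applies the second structure equations $\Omega^i{}_j = d\omega^i{}_j + \omega^i{}_k\wedge\omega^k{}_j$ and contracts to produce the Ricci tensor. By the same $U(2)$ symmetry, $\cRic$ must take the two-term algebraic form displayed in (\ref{EqnTfRic}); so only the scalar coefficients of $(\sigma^0)^2-(\sigma^1)^2$ and $(\sigma^0)^2+(\sigma^1)^2-(\sigma^2)^2-(\sigma^3)^2$ need to be extracted, together with $s$. I expect the main obstacle to be purely the bookkeeping needed to regroup the raw expressions — which come out as combinations of $F_z/(C\sqrt{F})$, $C_z/C^{3/2}$ and their derivatives — into the compact forms $\partial_z^2 h - \tfrac14 h$ and $\partial_z(F\,\partial_z h)$ appearing in the statement. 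Working in the variable $h = C^{-1/2}$ from the beginning should perform most of this regrouping automatically.

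As a final consistency check, specializing $C = C_0 e^{-z}$ in (\ref{EqnScalGeneral}) should collapse the scalar curvature expression to $-8C^{-1}(\mathcal{L}^+(F)-1)$, matching Proposition \ref{PropRicForm}; similarly, the trace-free part should reduce to the expression encoded in the K\"ahler case Ricci form. This provides a built-in confirmation that the constants and signs produced by the structure equation calculation are correct before declaring the proof complete.
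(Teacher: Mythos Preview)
Your proposal is correct but takes a genuinely different route from the paper. You proceed by a direct Cartan moving-frame computation in the orthonormal coframe $\{\sigma^i\}$, reading off the connection forms and then the Ricci components; this is self-contained and does not rely on any prior curvature computation. The paper instead leverages the K\"ahler case already established in Proposition~\ref{PropRicForm}: it starts from the metric with $C=e^{-z}$, where $\cRic$ and $s$ are known, and then applies the standard conformal-change formulas $\tilde{s}=U^{-2}(s-6U^{-1}\triangle_gU)$ and $\cRic_{\tilde g}=\cRic_g+2U(\nabla^2_gU^{-1}-\tfrac14(\triangle_gU^{-1})g)$ with $U=e^{z/2}C^{1/2}$. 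Your approach is more elementary and independent of Proposition~\ref{PropRicForm}, at the cost of heavier bookkeeping; the paper's approach is shorter given what has already been done and, more to the point, foregrounds the conformal relationship between the general and K\"ahler metrics that underlies the ambiK\"ahler theme of the paper. Amusingly, your final consistency check (specializing to $C=C_0e^{-z}$) is exactly the reverse of the paper's logic, which starts there and conformally changes outward.
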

	\begin{proof}
		We use the conformal change formulas from \cite{Besse}.
		The scalar curvature (\ref{EqnScalGeneral}) follows from (\ref{EqnScalCurvComp}) along with the formula $\tilde{s}=U^{-2}(s-6U^{-1}\triangle_g{}U)$ when $\tilde{g}=U^{2}g$.
		In the K\"ahler metric where $C=e^{-z}$, the Laplacian $\triangle_g$ acting on any $U=U(z)$ is $\triangle_g{}U=4e^{2z}\frac{\partial}{\partial{}z}\left(e^{-z}F\frac{\partial{U}}{\partial{}z}\right)$.
		To obtain (\ref{EqnScalGeneral}), use $U=e^{\frac12z}C^{\frac12}$.
		
		To compute $\cRic$, again we start with the K\"ahler case; (\ref{EqnRhoSeparation}) gives
		\begin{equation}
			\cRic{}_g
			\;=\;2e^z\left(\frac12F_{zz}-\frac12F_z-F+1\right)\left(
			-(\sigma^0)^2-(\sigma^1)^2+(\sigma^2)^2+(\sigma^3)^2
			\right) \label{EqnTfRicRecorded}
		\end{equation}
		The conformal change formula for the trace-free Ricci is $\cRic_{\tilde{g}}=\cRic_g+2U(\nabla^2_gU^{-1}-\frac14(\triangle_gU^{-1})g)$.
		Then
		\begin{equation}
			\begin{aligned}
				2U\Big(\nabla^2_gU^{-1}-\frac14(\triangle_gU^{-1})g\Big)
				=
				&-4UF(e^z(U^{-1})_z)_z\left(-(\sigma^0)^2+(\sigma^1)^2\right) \\
				&-2U(e^zF\,(U^{-1})_z)_z\big(-(\sigma^0)^2-(\sigma^1)^2+(\sigma^2)^2+(\sigma^3)^2\big)
			\end{aligned} \label{EqnConfAddTfRic}
		\end{equation}
		so with $U=e^{\frac12z}C^{\frac12}$ we add (\ref{EqnConfAddTfRic}) to (\ref{EqnTfRicRecorded}) to give (\ref{EqnTfRic}).
	\end{proof}
	
	\begin{proposition} \label{PropWeylForm}
		The metric (\ref{EqnMetsCxsKahls}) has Weyl curvatures
		\begin{equation}
			\begin{aligned}
				&W^\pm
				\;=\;-C^{-1}\left(\mathcal{L}^\pm(F)-1\right)\left(\omega^\pm\otimes\omega^\pm-\frac23Id_{\bigwedge^\pm}\right).
			\end{aligned} \label{EqnWeyls}
		\end{equation}
	\end{proposition}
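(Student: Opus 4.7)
The plan is to handle the Kähler case first, where the result is algebraic, and then to extend to the general case by conformal invariance. By Lemma \ref{LemmaClosedJ}, $g$ is Kähler with respect to $J^+$ precisely when $C = C_0 e^{-z}$. Moreover the substitution $z\mapsto -z$ preserves the form of the ansatz (\ref{EqnMetsCxsKahls}) while exchanging $J^\pm$, $\omega^\pm$, and $\mathcal{L}^\pm$, so it will be enough to establish the $W^+$ formula; the $W^-$ formula then follows by symmetry.

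In the Kähler case $C = C_0 e^{-z}$, I would invoke the well-known algebraic fact that on any 4-dimensional Kähler manifold with Kähler form normalized by $|\omega|^2 = 2$, the self-dual Weyl tensor has the form
\[
W^+ \;=\; \frac{s}{8}\left(\omega\otimes\omega - \frac{2}{3}\, Id_{\bigwedge^+}\right),
\]
coming from the eigenvalues $s/6$ on $\omega$ and $-s/12$ on $\omega^\perp\subset\bigwedge^+$. Substituting the scalar curvature $s = -\frac{8}{C}(\mathcal{L}^+(F)-1)$ supplied by Proposition \ref{PropRicForm} delivers the claimed identity when $C = C_0 e^{-z}$.

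For a general $U(2)$-invariant metric I would write $g = U^2 g_K$, where $g_K$ is the Kähler metric obtained by replacing $C$ with $C_0 e^{-z}$; then $U^2 = Ce^z/C_0$. Under this conformal change, $\bigwedge^+$ is unchanged as a subspace of $\bigwedge^2 T^*M$; the Kähler form scales as $\omega^+_g = U^2 \omega^+_{g_K}$, while the induced inner product on 2-forms scales by $U^{-4}$ relative to the $g_K$-inner product. These opposing scalings cancel in the rank-one endomorphism $\omega^+\otimes\omega^+:\alpha\mapsto\langle\omega^+,\alpha\rangle\omega^+$, which is therefore conformally invariant as an endomorphism of $\bigwedge^+$; $Id_{\bigwedge^+}$ is trivially so. Meanwhile the conformal invariance of the $(1,3)$-Weyl tensor translates to $W^+_g = U^{-2} W^+_{g_K}$ when viewed as an endomorphism of $\bigwedge^+$. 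Combining with the Kähler case and using $U^{-2}/(C_0 e^{-z}) = 1/C$ yields the formula in general.

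The only real obstacle is keeping the conformal scalings straight: one must verify that the scaling of $\omega$ (which enters twice) and the inverse scaling of the 2-form inner product (which enters once) cancel exactly so that the right-hand side is conformally invariant, modulo the explicit factor $1/C$ that carries all the metric dependence. Every other ingredient---the Kähler eigenvalue decomposition of $W^+$, the scalar curvature formula from Proposition \ref{PropRicForm}, and the conformal covariance of the Weyl tensor---is standard, and no direct Christoffel or curvature computation is required.
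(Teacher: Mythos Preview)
Your proposal is correct and follows essentially the same route as the paper: invoke Derdzinski's formula $W^+=\frac{s}{12}\bigl(\frac{3}{2}\omega\otimes\omega-Id_{\bigwedge^+}\bigr)$ in the K\"ahler case $C=C_0e^{-z}$, plug in the scalar curvature from Proposition~\ref{PropRicForm}, and then conformally change to arbitrary $C$; the $W^-$ case follows by the $z\mapsto -z$ symmetry. Your more explicit tracking of how $\omega^+\otimes\omega^+$ and $Id_{\bigwedge^+}$ behave under conformal scaling is a welcome elaboration, but the argument is the same.
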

	\begin{proof}
		We use Derdzinski's Theorem (\cite{Derd}, section 3, proposition 2) to find $W^+$ in the K\"ahler case, then conformally change to the arbitrary case.
		By Derdzinski's Theorem $W^+=\frac{s}{12}\left(\frac32\omega\otimes\omega-Id_{\bigwedge^+}\right)$ where $\omega$ is a K\"ahler form.
		When $C=e^{-z}$, $\omega^+$ is K\"ahler and Proposition \ref{PropRicForm} gives
		\begin{equation}
			W^+=-\frac23\,e^z\,(\mathcal{L}^+(F)-1)\left(\frac32\omega^+\otimes\omega^+-Id_{\bigwedge^+}\right). \label{EqnWPlusKahler}
		\end{equation}
		Conformally changing from $C=e^{-z}$ to any $C=C(z)$ gives (\ref{EqnWeyls}).
		Computing $W^-$ is the same, after setting $C=e^{z}$ so $\omega^-$ rather than $\omega^+$ is a K\"ahler form.
	\end{proof}
	\begin{proposition} \label{PropDelWpm}
		The metric (\ref{EqnMetsCxsKahls}) has $\delta{}W^+$ and $\delta{}W^-$ given by
		\begin{equation}
			\begin{aligned}
				&\delta{}W^\pm
				\;=\;W^\pm\left(\nabla\log\left\vert{}e^{\pm\frac32z}(\mathcal{L}^\pm(F)-1)\sqrt{C}\right\vert,\;\cdot\,,\;\cdot\,,\;\cdot\;\right).
			\end{aligned}
		\end{equation}
	\end{proposition}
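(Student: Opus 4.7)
The plan is to first establish the formula in the distinguished K\"ahler conformal gauge, where the computation reduces to differentiating a single scalar, and then to transport the result to general $C$ using the conformal transformation law for $\delta W^\pm$ in dimension~$4$.

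I would first work in the gauge $C = C_0 e^{-z}$, in which Lemma~\ref{LemmaClosedJ} gives $d\omega^+ = 0$, so $\omega^+$ is K\"ahler and hence parallel. The algebraic tensor $T := \omega^+\otimes\omega^+ - \tfrac{2}{3}Id_{\bigwedge^+}$ is then parallel as well, and Proposition~\ref{PropWeylForm} writes $W^+ = \lambda\,T$ with $\lambda = -C^{-1}(\mathcal{L}^+(F) - 1)$. The product rule gives $\nabla W^+ = d\lambda \otimes T$, and taking the divergence yields $\delta W^+ = W^+(\nabla\log|\lambda|,\,\cdot\,,\,\cdot\,,\,\cdot)$. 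Since $e^{3z/2}\sqrt{C} = \sqrt{C_0}\,e^{z}$ in this gauge, the two functions $\log|\lambda|$ and $\log|e^{3z/2}(\mathcal{L}^+(F)-1)\sqrt{C}|$ differ by an additive constant, and the stated identity holds when $C = C_0 e^{-z}$.

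For general $C$, I would pass to the K\"ahler representative $g_K$ via $g = U^2 g_K$ with $U^2 = Ce^{z}/C_0$, so that $2\log U = z + \log C + \mathrm{const}$. Because the $(1,3)$ Weyl tensor is conformally invariant, the standard conformal-change law for the divergence of a $(0,4)$ curvature-type tensor in dimension~$4$ contributes a correction term $W^+(3\,\nabla\log U,\,\cdot\,,\,\cdot\,,\,\cdot)$ when passing from $g_K$ to $g$. Combining with the K\"ahler calculation,
\begin{equation*}
\delta W^+ \;=\; W^+\bigl(\nabla\log|\lambda| + 3\,\nabla\log U,\;\cdot\,,\,\cdot\,,\,\cdot\bigr),
\end{equation*}
and the sum of logarithmic differentials telescopes via $3\,d\log U = \tfrac{3}{2}\,d(z+\log C)$ to $d\log|e^{3z/2}(\mathcal{L}^+(F)-1)\sqrt{C}|$, as claimed. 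The formula for $\delta W^-$ follows by the symmetry $z\mapsto -z$ interchanging $J^\pm$, equivalently by working in the opposite K\"ahler gauge $C = C_0 e^{+z}$.

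The main technical obstacle is verifying the conformal-change formula for $\delta W^+$ in dimension~$4$ and, in particular, pinning down the coefficient~$3$ in front of $\nabla\log U$. Writing $\tilde g = U^2 g$ and $u = \log U$, the Christoffel difference is $(\tilde\Gamma - \Gamma)^k_{ij} = \delta^k_i u_{,j} + \delta^k_j u_{,i} - g_{ij}u^{,k}$; applied to each of the four index positions of $W_{abc}{}^d$ and traced with $g^{ae}g_{df}$, the algebraic Weyl identities (first-pair antisymmetry and the trace-freeness $g^{ae}W_{aec}{}^d = 0$) collapse the correction down to $+3\,W^+(\nabla u,\,\cdot\,,\,\cdot\,,\,\cdot)$, which is exactly what is required.
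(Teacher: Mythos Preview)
Your overall strategy is exactly the one the paper uses: compute $\delta W^+$ in the K\"ahler gauge where $\omega^+$ is parallel, then move to general $C$ via the conformal transformation law for $\delta W^\pm$. The K\"ahler step is fine. The conformal step, however, contains a genuine error: the coefficient in front of $\nabla\log U$ is $1$, not $3$.

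Carrying out the computation you outline, with $\tilde g=e^{2u}g$ and Christoffel difference $\Delta\Gamma^k_{ij}=\delta^k_iu_{,j}+\delta^k_ju_{,i}-g_{ij}u^{,k}$, the four index corrections together with the $2u_{,m}$ coming from $\tilde W_{ijkl}=e^{2u}W_{ijkl}$ and the Bianchi identity $W_{kjpl}+W_{ljkp}=W_{pjkl}$ collapse to
\[
(\tilde\delta\tilde W)_{jkl}\;=\;(\delta W)_{jkl}\;+\;(n-3)\,u^{,i}W_{ijkl}\,,
\]
so in dimension $4$ the correction is $W^+(\nabla\log U,\cdot,\cdot,\cdot)$, not $W^+(3\nabla\log U,\cdot,\cdot,\cdot)$. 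This is precisely Derdzinski's formula (equation~(19) of \cite{Derd}) as quoted in the paper. With your coefficient $3$, the telescoping you describe does not in fact land on the stated expression: one gets
\[
d\log|\lambda_K|+3\,d\log U
\;=\;d\Bigl(\tfrac52 z+\log|\mathcal{L}^+(F)-1|+\tfrac32\log C\Bigr)
\;=\;d\log\bigl|e^{5z/2}(\mathcal{L}^+(F)-1)\,C^{3/2}\bigr|,
\]
not $d\log|e^{3z/2}(\mathcal{L}^+(F)-1)\sqrt{C}|$. Replacing $3$ by $1$ gives $\tfrac32 z+\log|\mathcal{L}^+(F)-1|+\tfrac12\log C$, which is the correct answer. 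So the argument is salvageable, but as written the key constant is wrong and the final identity does not follow.
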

	\begin{proof}
		We begin again by conformally changing the metric so it is K\"ahler.
		By Lemma (\ref{PropIntegrability}) the metric $\tilde{g}=e^{-z}C^{-1}g$ is K\"ahler and the form $\widetilde{\omega}=\tilde{g}(J^+\cdot,\cdot)$ is closed.
		Then $\tilde\delta\widetilde{\omega}=-*d\widetilde{\omega}=0$ so also $\tilde\delta(\widetilde{\omega}\otimes\widetilde{\omega})=0$, and $\tilde\delta(Id_{\bigwedge^+})=0$ because $Id_{\bigwedge^+}$ is covariant-constant.
		Therefore (\ref{EqnWeyls}) gives
		\begin{equation}
			\begin{aligned}
				\tilde\delta\,\widetilde{W}^+(\cdot,\cdot,\cdot)
				&\;=\;\tilde\delta\left(-e^{z}(\mathcal{L}^+(F)-1)\left(\widetilde{\omega}\otimes\widetilde{\omega}-\frac23Id_{\bigwedge^+}\right)\right)(\cdot,\cdot,\cdot) \\
				&\;=\;-\left(\widetilde{\omega}\otimes\widetilde{\omega}-\frac23Id_{\bigwedge^+}\right)\left(\widetilde{\nabla}(e^{z}(\mathcal{L}^+(F)-1)),\;\cdot\,,\;\cdot\,,\;\cdot\,\right) \\
				&\;=\;\widetilde{W}^+\left(\widetilde{\nabla}\log\left\vert{}e^{z}(\mathcal{L}^+(F)-1)\right\vert,\;\cdot\,,\;\cdot\,,\;\cdot\,\right) \\
				&\;=\;W^+\left(\nabla\log\left\vert{}e^{z}(\mathcal{L}^+(F)-1)\right\vert,\;\cdot\,,\;\cdot\,,\;\cdot\,\right).
			\end{aligned} \label{EqnDelWpKahler}
		\end{equation}
		Derdzinski's conformal change formula, equation (19) of \cite{Derd}, is
		\begin{equation}
			\tilde\delta\,\widetilde{W}^+
			\;=\;
			\delta{}W^+-\frac12W^+\left(\nabla\log\left(e^{z}C\right),\;\cdot\,,\;\cdot\,,\;\cdot\,\right) 	\label{EqnDelWp}
		\end{equation}
		so changing the metric back with conformal factor $e^{z}C$, (\ref{EqnDelWpKahler}) and (\ref{EqnDelWp}) give
		\begin{equation}
			\delta{}W^+=W^+\left(\nabla\log\left\vert{}e^{\frac32z}(\mathcal{L}^+(F)-1)\sqrt{C}\right\vert,\;\cdot\,,\;\cdot\,,\;\cdot\,\right).
		\end{equation}
		The argument for $\delta{}W^-$ is entirely the same, after conformally changing so that $\tilde\omega^-$ not $\tilde\omega$ is closed.
	\end{proof}
	
	There are two basic ways to compute the Bach tensor.
	We could use direct computation using perhaps (24) of \cite{Derd} or in the K\"ahler case that $Bach(J\cdot,\cdot)$ is a multiple of $(s\rho+2\sqrt{-1}\partial\bar\partial{}s)_0$; see (39) of \cite{Derd}, (21) of \cite{ACG03}, or Lemma 6 of \cite{CLW}.
	Alternatively we could compute the variation of $\int|W|^2dVol_4$ directly.
	We choose to do the latter, because it helps elucidate the nature of the quadratic functional $\mathcal{B}(F,F)$: it comes from varying the diffeomorphism gauge, and its constancy along solutions of $\mathcal{L}^+\circ\mathcal{L}^-(F)-1=0$ is an expression of the second Bianchi identity.
	
	From Proposition \ref{PropWeylForm} we easily compute $\vert{}W^\pm\vert^2$ and $\vert{}W^\pm\vert^2dVol$.
	These are
	\begin{equation}
		\begin{aligned}
			&\vert{}W^\pm\vert^2=\frac{32}{3C^2}\left(\mathcal{L}^\pm(F)-1\right)^2, \quad \text{and} \\
			&\vert{}W^\pm\vert^2dVol
			=\frac{16}{3}\left(\mathcal{L}^\pm(F)-1\right)^2dz\wedge\eta^1\wedge\eta^2\wedge\eta^3.
		\end{aligned} \label{EqnWeylNorm}
	\end{equation}
	With this in hand, we can compute the Bach tensor by carrying out variations of the metric and observing the changes in $\int\vert{}W^+\vert^2dVol$.
	Only two independent variations are possible while maintaining $U(2)$ symmetry and the conformal gauge (maintaining the conformal gauge amounts to using only trace-free variations in the metric).
	Briefly, the reason this is true is that the metric as expressed in the $r$-coordinate (\ref{EqnMetOrig}) has three independent parameters $A$, $B$, and $C$ that may vary, but maintaining the conformal gauge creates a relation among these so there are really only two parameters.
	
	The first of our two variations will be to vary $F=B/C$ via
	\begin{equation}
		F\mapsto{}F+sf \label{EqnVarFirst}
	\end{equation}
	while keeping $C$ unchanged so the conformal gauge is preserved.
	Below, we'll see that stabilizing this variation is equivalent to the linear equation $\mathcal{L}^+\circ\mathcal{L}^-(F)-1=0$.
	
	For the second of our variations, we create a diffeomorphism flow that alters the coordinate $z$, while fixing both $F$ and the conformal gauge.
	Stabilizing this variation leads to the non-linear equation $\mathcal{B}(F,F)=0$.
	Such a diffeomorphism flow can be expressed
	\begin{equation}
		L_{\frac{d}{dt}}dz\;=\;\alpha{}dz, \quad\quad
		L_{\frac{d}{dt}}\frac{\partial}{\partial{}z}\;=\;-\alpha\frac{\partial}{\partial{}z} \label{EqnVarZ}
	\end{equation}
	where time $t$ is the variation parameter, $\alpha=\alpha(z)$ is a function, and $L_{\frac{d}{dt}}$ is the Lie derivative along the flow.
	For the variation in $z$ to occur in a compact region requires both that $\alpha$ have compact support and $\int_{-\infty}^\infty\alpha{}dz=0$, in which case $\alpha$ has an antiderivative with compact support.
	We define $\mathcal{A}=\mathcal{A}(z)$ to be the unique function so that
	\begin{equation}
		\text{$\mathcal{A}'(z)\;=\;\alpha(z),$ and $\mathcal{A}$ has compast support.}
	\end{equation}
	From (\ref{EqnVarZ}) we have the variational field $\frac{d}{dt}=\mathcal{A}\frac{\partial}{\partial{}z}$.
	The function $F$ changes with $z$, so to force $F$ to remain constant along the diffeomorphism flow we must vary $F$ \textit{explicitly} with time.
	Its total derivative is
	\begin{equation}
		\begin{aligned}
			0\;=\;\frac{dF}{dt}
			\;=\;
			\frac{\partial{}F}{\partial{}t}+\frac{dz}{dt}\frac{\partial{}F}{\partial{}z}
			\;=\;\frac{\partial{}F}{\partial{}t}+\mathcal{A}\frac{\partial{}F}{\partial{}z},
		\end{aligned} \label{EqnsExplFCh}
	\end{equation}
	so we vary $F$ explicitly according to the transport equation $F_t-\mathcal{A}F_z=0$.
	We must also vary $C$ explicitly in order to maintain the conformal gauge.
	Letting $\{\sigma^0,\sigma^1,\sigma^2,\sigma^3\}$ be the orthonormal frame of (\ref{EqnUnitForms}), then of course
	\begin{equation}
		g\;=\;(\sigma^0)^2+(\sigma^1)^2+(\sigma^2)^2+(\sigma^3)^2
	\end{equation}
	and $(\sigma^0)^2=\frac{C}{4F}(dz)^2$, $(\sigma^1)^2=CF(\eta^1)^2$, $(\sigma^2)^2=C(\eta^2)^2$, $(\sigma^2)^2=C(\eta^2)^2$.
	Using $\frac{dF}{dt}=0$ and $L_{\frac{d}{dt}}(dz)^2=2\alpha(dz)^2$, we obtain
	\begin{equation}
		\begin{aligned}
			L_{\frac{d}{dt}}g
			&\;=\;
			\left(2\alpha+\frac{d}{dt}\log\,C\right)(\sigma^0)^2
			+\left(\frac{d}{dt}\log\,C\right)(\sigma^1)^2 \\
			&\quad\quad
			+\left(\frac{d}{dt}\log\,C\right)(\sigma^2)^2
			+\left(\frac{d}{dt}\log\,C\right)(\sigma^3)^2.
		\end{aligned} \label{EqnSecondVarTrFree}
	\end{equation}
	Maintaining the conformal gauge is equivalent to requiring the right-hand side of (\ref{EqnSecondVarTrFree}) be trace-free.
	Thus we vary $C$ by requiring $\frac{d}{dt}\log{}C=-\frac12\alpha$ along the flow, which is the same as evolving $C$ explicitly by $(\log{}C)_t+\mathcal{A}\cdot(\log{}C)_z=-\frac12\alpha$.
	Thus $C$ obeys a transport equation with a source.
	
	\begin{theorem}[The Bach Tensor] \label{ThmBachTensor}
		The Bach tensor of (\ref{EqnMetsCxsKahls}) is
		\begin{equation}
			\begin{aligned}
				Bach
				&\;=\;
				\frac{16}{3C^2}\cdot{}F\cdot\big(\mathcal{L}^-(\mathcal{L}^+(F))-1\big)\cdot\Big(-2(\sigma^1)^2\,+\,(\sigma^2)^2\,+\,(\sigma^3)^2
				\Big) \\
				&\quad\quad
				+\frac{8}{3C^2}\cdot\mathcal{B}(F,F)\cdot\Big(
				-(\sigma^0)^2-(\sigma^1)^2
				+(\sigma^2)^2+(\sigma^3)^2
				\Big).
			\end{aligned} \label{EqnBachDetermination}
		\end{equation}
	\end{theorem}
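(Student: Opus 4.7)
The plan is to use the variational characterization of the Bach tensor: in dimension four, the conformal invariance of $\int_M |W|^2\, dV$ gives $\delta_h \int |W|^2\, dV = -4\int \langle Bach, h\rangle\, dV$ for every trace-free compactly supported metric perturbation $h$. The symmetries of the ansatz force $Bach = a(z)\,T_1 + b(z)\,T_2$, where $T_1 = -2(\sigma^1)^2 + (\sigma^2)^2 + (\sigma^3)^2$ and $T_2 = -(\sigma^0)^2 - (\sigma^1)^2 + (\sigma^2)^2 + (\sigma^3)^2$ are the only diagonal trace-free $U(2)$-invariant symmetric $2$-tensors (with equal $(\sigma^2)^2$ and $(\sigma^3)^2$ weights). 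It therefore suffices to evaluate $\delta \int |W|^2\, dV$ against the two independent trace-free variations set up in (\ref{EqnVarFirst}) and (\ref{EqnVarZ}).

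For the first variation, $h_f = (f/F)(-(\sigma^0)^2 + (\sigma^1)^2)$ satisfies $\langle T_1, h_f\rangle = -2f/F$ and $\langle T_2, h_f\rangle = 0$, so only $a$ is probed. Since (\ref{EqnWeylNorm}) shows that $(|W^+|^2 + |W^-|^2)\, dV$ is $C$-independent in our coordinates, one can differentiate directly and integrate by parts using the adjoint relation $(\mathcal{L}^\pm)^* = \mathcal{L}^\mp$ (valid because the operators have constant coefficients and are exchanged by $\partial_z \mapsto -\partial_z$), together with $\mathcal{L}^+\mathcal{L}^- = \mathcal{L}^-\mathcal{L}^+$ and $\mathcal{L}^\pm(1) = 1$. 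The result is $\delta_f \int |W|^2\, dV = \tfrac{64}{3}\int (\mathcal{L}^-\mathcal{L}^+ F - 1)\, f\, dz\wedge\eta^1\wedge\eta^2\wedge\eta^3$, and matching against $-4\int \langle Bach, h_f\rangle\, dV$ (using $dV = \tfrac{1}{2}C^2\, dz\wedge\eta^1\wedge\eta^2\wedge\eta^3$) yields $a = \tfrac{16F}{3C^2}(\mathcal{L}^-\mathcal{L}^+ F - 1)$.

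For the second variation, the induced trace-free perturbation (\ref{EqnSecondVarTrFree}) is $h_\mathcal{A} = \tfrac{3\alpha}{2}(\sigma^0)^2 - \tfrac{\alpha}{2}((\sigma^1)^2 + (\sigma^2)^2 + (\sigma^3)^2)$, which isolates $b$ via $\langle T_1, h_\mathcal{A}\rangle = 0$ and $\langle T_2, h_\mathcal{A}\rangle = -2\alpha$. Since $\int |W|^2\, dV$ is invariant under the diffeomorphism flow, only the explicit variations contribute; and since the Weyl density is $C$-independent, only $\partial_t F = -\mathcal{A}F_z$ matters. A direct substitution yields $\delta_\mathcal{A}\int |W|^2\, dV = -\tfrac{64}{3}\int (\mathcal{L}^-\mathcal{L}^+ F - 1)\, F_z\,\mathcal{A}\, dz\wedge\eta^1\wedge\eta^2\wedge\eta^3$. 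The central algebraic identity, encoding the \emph{first-integral} property mentioned in the introduction, is $\partial_z \mathcal{B}(F,F) = 2(\mathcal{L}^-\mathcal{L}^+ F - 1)\, F_z$: setting $u = \mathcal{L}^+ F - 1$ so $\mathcal{B} = (2F - u - 2)u + F_z u_z$, direct differentiation together with $2u = F_{zz} - 3F_z + 2F - 2$ simplifies $\mathcal{B}_z$ to $F_z(u_{zz} + 3u_z + 2u) = 2F_z\, \mathcal{L}^- u$. One integration by parts then converts the above into $\tfrac{32}{3}\int \mathcal{B}(F,F)\,\alpha\, dz\wedge\eta^1\wedge\eta^2\wedge\eta^3$, and matching against $-4\int \langle Bach, h_\mathcal{A}\rangle\, dV$ gives $b = \tfrac{8}{3C^2}\mathcal{B}(F,F)$.

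The main obstacle is that the integration by parts in the second step requires $\mathcal{A}$ of compact support, forcing $\int\alpha\, dz = 0$, so the variational argument only determines $\mathcal{B}(F,F)$ up to an additive constant. This is pinned down by evaluating at $F \equiv 1$: there $\mathcal{L}^\pm F - 1 = 0$ forces $W^\pm = 0$ and hence $Bach = 0$, while the algebraic formula (\ref{EqnBOperatorDef}) gives $\mathcal{B}(1,1) = 0$, so no correction is needed. Combining the two coefficients produces (\ref{EqnBachDetermination}).
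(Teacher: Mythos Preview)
Your approach is essentially the paper's: compute the variation of the Weyl energy against the two trace-free $U(2)$-invariant perturbations $h_f$ and $h_\alpha$ and read off the two Bach coefficients. Your choice of $|W|^2$ with the factor $-4$ (versus the paper's $|W^+|^2$ with $-2$) and your basis $T_1,T_2$ (versus the paper's orthogonal $I_1,I_2$) are cosmetic. The one genuinely different move is your treatment of the second variation: you split the total variation into a diffeomorphism part (which contributes nothing by invariance) plus the explicit $F$-variation $f=-\mathcal{A}F_z$, then invoke the first-integral identity $\partial_z\mathcal{B}=2F_z(\mathcal{L}^-\mathcal{L}^+F-1)$ and integrate by parts. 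This is elegant, and avoids the paper's direct manipulation of $\frac{d}{dt}\mathcal{L}^+(F)$ under the coordinate flow.

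There is, however, a real gap in your resolution of the ``additive constant'' issue. Your shortcut requires $\mathcal{A}$ (not just $\alpha$) to be compactly supported, hence $\int\alpha\,dz=0$, and this leaves $bC^2-\tfrac{8}{3}\mathcal{B}(F,F)$ equal to some constant $K$. You then evaluate at $F\equiv 1$ to conclude $K=0$. But $K$ is a priori a constant \emph{in $z$}, not a universal constant: it could depend on the particular $F$. Checking $F\equiv 1$ only shows $K=0$ for that one metric. To make your argument complete you would need to observe (for instance via the conformal weight of $Bach$ and the naturality of curvature invariants) that $bC^2$ is a local differential polynomial in $F$ alone, whence $K$---being such a polynomial that is constant in $z$ for every $F$---must be an absolute constant; \emph{then} $F\equiv 1$ pins it down. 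The paper sidesteps this entirely: its direct computation of $\frac{d}{dt}\int|W^+|^2\,dVol$ (expanding $\frac{d}{dt}\mathcal{L}^+(F)$ using $L_{d/dt}\partial_z=-\alpha\partial_z$ and $dF/dt=0$) and the subsequent integration by parts require only $\alpha$, not $\mathcal{A}$, to have compact support, so the identity holds for all compactly supported $\alpha$ and $b$ is determined with no ambiguity.
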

	\begin{proof}
		We compute the Bach tensor by directly computing the variation of $\int\vert{}W^+\vert^2$.
		First, we use the variation $F\mapsto{}F+sf$ of (\ref{EqnVarFirst}) to compute
		\begin{equation}
			\frac{d}{ds}\int\vert{}W^+\vert^2dVol
			\;=\;\frac{32}{3}\int_{\mathbb{S}^3}\left[\int\mathcal{L}^+(f)(\mathcal{L}^+(F)-1)dz\right]\eta^1\wedge\eta^2\wedge\eta^3.
		\end{equation}
		When integrating against $z$, $\mathcal{L}^-$ is the $L^2$-adjoint of $\mathcal{L}^+$ so therefore
		\begin{equation}
			\frac{d}{ds}\int\vert{}W^+\vert^2dVol
			\;=\;\frac{32}{3}\int{}f\cdot\left(\mathcal{L}^-\left(\mathcal{L}^+(F)\right)-1\right)\,C^{-2}dVol. \label{EqnWNSSVar}
		\end{equation}
		
		Next we use the variation from (\ref{EqnVarZ}).
		Note that $L_{\frac{d}{dt}}dz=\alpha{}dz$ implies $L_{\frac{d}{dt}}dz\wedge\eta^1\wedge\eta^2\wedge\eta^3=\alpha{}dz\wedge\eta^1\wedge\eta^2\wedge\eta^3$.
		Therefore
		\begin{equation}
			\small
			\begin{aligned}
				&
				\frac{d}{dt}\int\vert{}W^+\vert^2dVol
				\;=\;\frac{d}{dt}\frac{16}{3}\int\left(\mathcal{L}^+(F)-1\right)^2\,dz\wedge\eta^1\wedge\eta^2\wedge\eta^3 \\
				&\quad=\frac{16}{3}\int\left[
				2\frac{d}{dt}\left(\mathcal{L}^+(F)\right)\left(\mathcal{L}^+(F)-1\right)
				+\alpha\left(\mathcal{L}^+(F)-1\right)^2
				\right]dz\wedge\eta^1\wedge\eta^2\wedge\eta^3.
			\end{aligned}
		\end{equation}
		Computing $\frac{d}{dt}\mathcal{L}^+(F)$ using $L_{\frac{d}{dt}}\frac{\partial}{\partial{}z}=-\alpha\frac{\partial}{\partial{}z}$ and $\frac{dF}{dt}=0$, we obtain
		\begin{equation}
			\small
			\begin{aligned}
				2\frac{d}{dt}\mathcal{L}^+(F)
				&\;=\;2\frac{d}{dt}\left[\left(-\frac12\frac{\partial}{\partial{}z}+1\right)\left(\left(-\frac{\partial}{\partial{}z}+1\right)F\right)\right] \\
				&\;=\;\alpha\frac{\partial}{\partial{}z}\left(\left(-\frac{\partial}{\partial{}z}+1\right)F\right)
				-2\left(-\frac12\frac{\partial}{\partial{}z}+1\right)\left(\left(-\alpha\frac{\partial}{\partial{}z}\right)F\right) \\
				&\;=\;-\alpha\left(2F_{zz}-3F_z\right)
				-\frac{\partial\alpha}{\partial{z}}F_z.
			\end{aligned}
		\end{equation}
		The variational integral $\frac{d}{dt}\int\vert{}W^+\vert^2dVol$ is therefore
		\begin{equation}
			\small
			\begin{aligned}
				&\frac{d}{dt}\frac{16}{3}\int\left(\mathcal{L}^+(F)-1\right)^2\,dz\wedge\eta^1\wedge\eta^2\wedge\eta^3 \\
				&\hspace{0.25in}=\frac{16}{3}\int\Big(-\alpha\left(2F_{zz}-3F_z\right)
				+\alpha(\mathcal{L}^+(F)-1) \\
				&\hspace{0.875in}
				-F_z\frac{\partial\alpha}{\partial{}z}
				\Big)
				\left(\mathcal{L}^+(F)-1\right)\,dz\wedge\eta^1\wedge\eta^2\wedge\eta^3.
			\end{aligned}
		\end{equation}
		Applying integration by parts to remove the $\frac{\partial\alpha}{\partial{z}}$ factor gives
		\begin{equation}
			\small
			\begin{aligned}
				\frac{d}{dt}\int\vert{}W^+\vert^2\,dVol
				\;=\;\frac{16}{3}\int\alpha\,\mathcal{B}(F,F)\,C^{-2}dVol
			\end{aligned} \label{EqnVarBComp}
		\end{equation}
		where $\mathcal{B}(F,F)$ is the quadratic third order operator of (\ref{EqnBOperatorDef}).
		
		Next we compute the variation of the metric itself.
		Using $(\sigma^0)^2=\frac{C}{4F}(dz)^2$ and $(\sigma^1)^2=CF(\eta^1)^2$ we have $L_{\frac{d}{ds}}g=-\frac{f}{F}(\sigma^0)^2+\frac{f}{F}(\sigma^1)^2$.
		This and (\ref{EqnSecondVarTrFree}) give
		\begin{equation}
			\begin{aligned}
				&L_{\frac{d}{ds}}g
				\;=\;-\frac{f}{F}(\sigma^0)^2
				+\frac{f}{F}(\sigma^1)^2, \\
				&L_{\frac{d}{dt}}g
				\;=\;
				\alpha\frac{3}{2}(\sigma^0)^2
				-\alpha\frac{1}{2}(\sigma^1)^2
				-\alpha\frac{1}{2}(\sigma^2)^2
				-\alpha\frac{1}{2}(\sigma^3)^2.
			\end{aligned} \label{EqnMetrVar}
		\end{equation}
		
		We have now computed $\delta{}g$ and $\delta\int\vert{}W^+\vert^2dVol$.
		The Bach tensor is (implicitly) defined by
		\begin{equation}
			\delta\int\vert{}W^+\vert^2dVol
			\;=\;\int\left<\delta{}g,\,-2Bach\right>\,dVol \label{EqnBachDef}
		\end{equation}
		where we used the fact that the Bach tensor, as it is commonly expressed $Bach_{ij}=W_{sijt}{}^{,st}+\frac12W_{sijt}\Ric^{st}$, is \textit{negative four times}\footnote{Compare (4.77) of \cite{Besse} and ($67^{\prime\prime\prime}$) of \cite{Bach21}.} the $L^2$ gradient of $g\mapsto\int\vert{}W\vert^2dVol$, so negative two times the gradient of $g\mapsto\int\vert{}W^+\vert^2dVol$.
		
		To compute $Bach$, we choose an orthogonal basis for the subspace of the $U(2)$-invariant, trace-free $(0,2)$-forms in which $\delta{}g$ resides:
		\begin{equation}
			I_1=(\sigma^0)^2-(\sigma^1)^2, \qquad
			I_2=(\sigma^0)^2+(\sigma^1)^2-(\sigma^2)^2-(\sigma^3)^2.
		\end{equation}
		These are orthogonal, with $\vert{}I_1\vert^2=2$ and $\vert{}I_2\vert^2=4$.
		We write $Bach=B_1\cdot{}I_1+B_2\cdot{}I_2$	where $B_1$, $B_2$ are functions of $z$.
		The two metric variations (\ref{EqnMetrVar}) are
		\begin{equation}
			\delta_sg\;=\;-\frac{f}{F}\cdot{}I_1
			\qquad\text{and}\qquad
			\delta_tg\;=\;\alpha\cdot{}I_1+\frac{\alpha}{2}\cdot{}I_2.
		\end{equation}
		Substituting each variation into (\ref{EqnBachDef}) gives
		\begin{equation}
			\small
			\begin{aligned}
				&\int{}f\frac{32}{3C^2}\left(\mathcal{L}^-(\mathcal{L}^+(F))-1\right)
				=\int\left<-\frac{f}{F}I_1,\,-B_1I_1-B_2I_2\right>
				=2\int\frac{f}{F}B_1 \\
				&\int\alpha\frac{16}{3C^2}\mathcal{B}(F,F)
				=\int\left<\alpha{}I_1+\frac{\alpha}{2}I_2,\,-B_1I_1-B_2I_2\right>
				=-2\int\alpha\left(B_1+B_2\right)
			\end{aligned}
		\end{equation}
		which must hold for all $f$, $\alpha$.
		Therefore $B_1=\frac{16}{3}F\left(\mathcal{L}^-(\mathcal{L}^+(F))-1\right)$ and $B_2=-\frac83\mathcal{B}(F,F)-\frac{16}{3}F\left(\mathcal{L}^-(\mathcal{L}^+(F))-1\right)$.
		We conclude, as claimed in (\ref{EqnBachDetermination}), that
		\begin{equation}
			\begin{aligned}
				Bach
				&\;=\;
				\frac{16}{3C^2}F\left(\mathcal{L}^-(\mathcal{L}^+(F))-1\right)\left(I_1-I_2\right)
				\;+\;
				\frac{8}{3C^2}\mathcal{B}(F,F)(-I_2).
			\end{aligned}
		\end{equation}
	\end{proof}
	Compare (\ref{EqnBachDetermination}) with (3.3) of \cite{Ot14}; after substituting $C=1$, $F=f^2$ and $dz=2fdt$, these are the same.
	Recalling that $\mathcal{B}$ is a constant of the motion, our expression makes evident that Bach-flat metrics are in fact solutions of a \textit{linear} equation.

	\subsection{Topology: ``nuts,'' ``bolts,'' and asymptotics} \label{SubsecTopology}
	
	Much of our focus is on local analysis in this paper, but here we briefly discuss some global aspects of $U(2)$-invariant metrics.
	Ostensibly the metric (\ref{EqnMetsCxsKahls}) is well defined on $\mathbb{R}\times\mathbb{S}^3$ but topology changes occur if $F$ or $C$ attain $0$ somewhere.
	If $F$ reaches zero, the metric most naturally lives on a quotient $I\times(\mathbb{S}^3/\Gamma)/\sim$ where $\Gamma$ is some discrete subgroup of $SU(2)$, and $\sim$ identifies some 3-sphere to a 2-sphere, via the Hopf map.
	If $F$ or $C$ become infinite there might be a complete (or incomplete) manifold end.
	After reviewing ``nut'' and ``bolt'' topology changes, we discuss ALE, ALF, and asymptotically cusp-like ends.
	
	\subsubsection{Bolts and Nuts} \label{SubSubsecBoltsNuts}
	
	\noindent\begin{figure}[ht]
		\vspace{-0.3in}
		\hspace{.5in}
		\begin{minipage}[c]{0.45\textwidth}
			\caption{
				\it A compact manifold with two bolts, one of positive and one of negative self-intersection.
				At a bolt, the Hopf fiber pinches off to zero while the base $\mathbb{S}^2$ remains finitely-sized.
				\label{FigTwoBolts}
			}
		\end{minipage} \hspace{0.25in}
		\begin{minipage}[c]{0.2\textwidth}
			\includegraphics[scale=0.35]{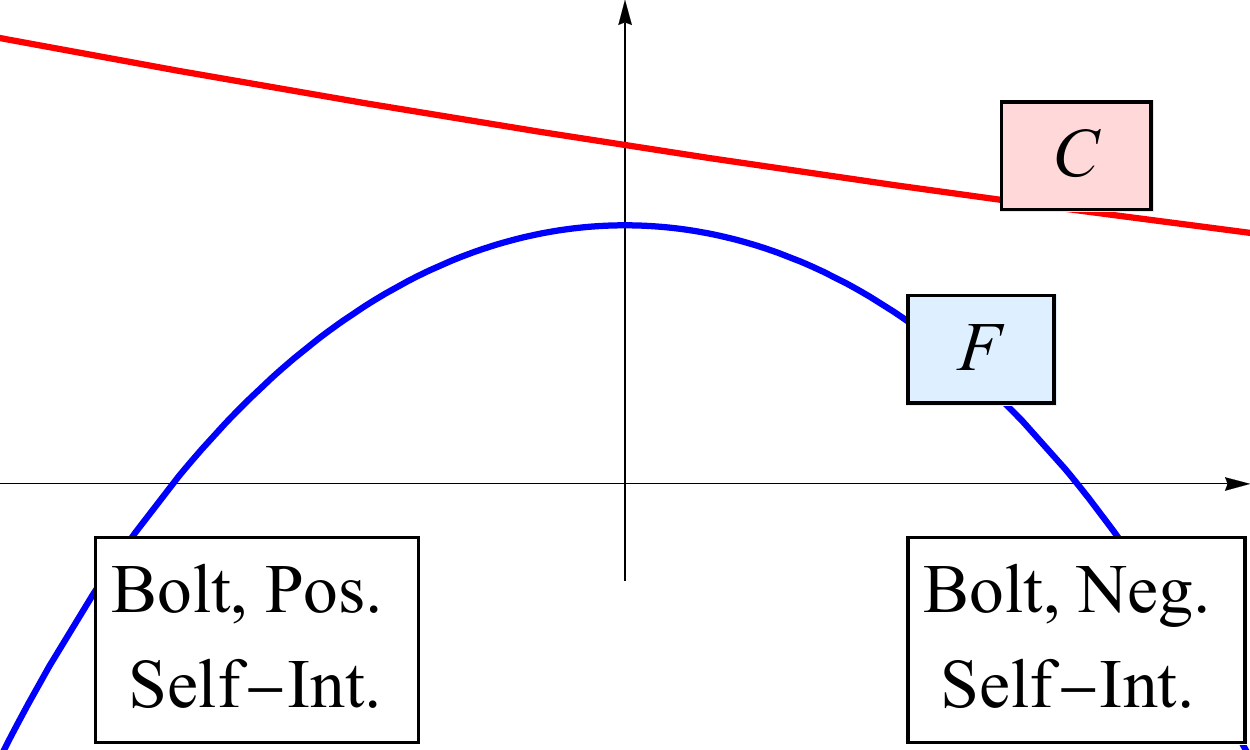}
		\end{minipage} \hfill
	\end{figure}

	The first kind of topology change occurs when the Hopf fiber collapses but the conformal factor remains non-zero, meaning $F$ but not $C$ reaches zero.
	When $F(z_0)=0$,  the locus $z=z_0$ is not a 3-sphere but a 2-sphere, colloquially known as a ``bolt" \cite{GH79} (see also  \cite{Misner63}, \cite{EH78},\cite{LeB88}). 
	
	Recalling the coordinates of Section \ref{SubSecCxStructs}, transversals to the bolt are 2-dimensional submanifolds locally given by $\theta=const$, $\varphi=const$, and the metric is smooth at the bolt provided it is smooth on such transversals.
	The inherited metric on the transversal is $\hat{g}_2=\frac{1}{4F}dz^2\,+\,\frac{F}{4}d\psi^2$, which we write $\hat{g}_2 = dr^2 + (\sqrt{F} d(\frac{1}{2} \psi))^2$ by solving $dr=\frac{1}{\sqrt{4F}}dz$ with $r = 0$ at $z = z_0$. If $\sqrt{F} = k r + O(r^2)$, where $k \neq 0$, then $\psi$ will only have range $(-\frac{2\pi}{k}, \frac{2\pi}{k})$ as we approach the bolt and the metric $\hat{g}_2$ will be conical at $r=0$ with cone angle $2\pi\vert{}k\vert$ (smooth if $k = \pm 1$).
	If $k \in \mathbb{Z}\setminus\{0\}$ however, we can obtain a smooth metric on the quotient $I\times\mathbb{S}^3/\Gamma$ where $\Gamma$ is a cyclic subgroup of order $\vert{}k\vert$ of the Hopf action.
	From $\sqrt{F}=kr+O(r^2)$ we have $k=\frac{d\sqrt{F}}{dr}$, and because $\frac{d}{dr}=2\sqrt{F}\frac{d}{dz}$, $k=\frac{dF}{dz}$.
	We summarize this in the following Proposition.
	
	\begin{proposition}[The ``bolting condition''] \label{LemmaBolting}
		Let $z=z_0$ be a zero of $F(z)$ and not a zero of $C$.
		Assuming
		\begin{equation}
			\left.\frac{dF}{dz}\right\vert_{z=z_0}\;=\;k
		\end{equation}
		where $k\in\mathbb{R}\setminus\{0\}$ then we may identify the locus $\{z=z_0\}$ with a $2$-sphere (a ``bolt'').
		Assuming $k\in\mathbb{Z}\setminus\{0\}$, then taking the $\vert{}k\vert$-to-1 quotient of the $\mathbb{S}^3$ factor, the metric is smooth near $\{z=z_0\}$ and the ``bolt'' is a 2-sphere of self intersection number $k$.
	\end{proposition}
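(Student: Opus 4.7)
The plan is to reduce the smoothness question at $z=z_0$ to the 2-dimensional transversal analysis already sketched in the paragraph above the proposition, and then to read off the self-intersection number of the bolt from the associated normal $S^1$-bundle.

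First, because $C(z_0)\neq 0$ the only direction collapsing at $z_0$ is $\eta^1$, while $\eta^2$ and $\eta^3$ remain of bounded nonzero norm. Fixing $(\theta_0,\varphi_0)$ with $\theta_0\in(0,\pi)$ and restricting to the transversal $\{\theta=\theta_0,\varphi=\varphi_0\}$, the induced metric is, up to the positive constant factor $C(z_0)$,
\begin{equation*}
\hat g_2\;=\;\tfrac{1}{4F}\,dz^2+\tfrac{F}{4}\,d\psi^2.
\end{equation*}
I would then introduce the geodesic radial coordinate $r$ via $dr=\tfrac{dz}{2\sqrt{F}}$, $r(z_0)=0$, so that $\hat g_2=dr^2+\tfrac{F}{4}d\psi^2$. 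Using $\tfrac{dz}{dr}=2\sqrt F$, the chain rule gives $\tfrac{d\sqrt F}{dr}=\tfrac{dF}{dz}$, so writing $\sqrt F(r)=kr+O(r^2)$ one identifies $k=\left.\tfrac{dF}{dz}\right|_{z_0}$ (up to an orientation sign).

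Second, near $r=0$ the transversal metric is asymptotically $dr^2+r^2(\tfrac{k}{2}d\psi)^2$. Since $\psi$ has natural period $4\pi$ by (\ref{EqnCoordDefs}), the angular coordinate $\tilde\psi=\tfrac{k}{2}\psi$ has total period $2\pi|k|$; hence the flat cone model is smooth exactly when $|k|=1$. For general integer $|k|\geq 1$, quotienting the $S^3$-factor by the cyclic subgroup $\mathbb{Z}/|k|\mathbb{Z}$ generated by $\psi\mapsto\psi+\tfrac{4\pi}{|k|}$ (a subgroup of the Hopf $S^1$, which is central in $U(2)$ so it commutes with the ansatz) reduces the period of $\tilde\psi$ to $2\pi$ and removes the singularity. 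Because this modification acts only along the Hopf fibers, transversality of the $\eta^2,\eta^3$ directions lifts the 2D smoothness to smoothness of the full 4D metric on $I\times(S^3/(\mathbb{Z}/|k|\mathbb{Z}))/\!\sim$.

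Third, I would identify the bolt topologically and compute its self-intersection. The collapsing orbit at $z_0$ is precisely the Hopf $S^1$ generated by $e_1=2\partial_\psi$, so the locus $\{z=z_0\}$ is the base of the Hopf fibration $S^3/S^1=\mathbb{CP}^1\cong S^2$ (after the $\mathbb{Z}/|k|\mathbb{Z}$-quotient, still a 2-sphere). For the self-intersection, I would observe that the normal disk bundle of this $S^2$ is the bundle of 2-disks swept out by $(r,\psi)$ over each point of the base; the monodromy of $\psi\in[0,\tfrac{4\pi}{|k|})$ around a generic Hopf fiber of the neighboring $S^3$-slices is rotation by $2\pi\cdot\mathrm{sgn}(k)$. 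This identifies the normal bundle with $\mathcal{O}(k)\to\mathbb{CP}^1$, whose self-intersection is $k$.

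The hard part is the orientation bookkeeping. The absolute value $|k|$ comes out cleanly from the cone-smoothness analysis, but showing that the \emph{signed} integer $k=F'(z_0)$ produces the signed self-intersection (rather than $|k|$ or $-k$) requires a careful comparison of the chosen orientations of $r$, of the $\psi$-circle, and of the ambient 4-manifold. Everything else is routine: the transversal reduction, the flat-cone-model computation, and the standard identification of $\mathcal{O}(k)$ via its Euler number.
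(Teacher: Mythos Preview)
Your argument is correct and matches the paper's own proof almost line-for-line: the paper also restricts to the $(\theta,\varphi)$-transversal, introduces the same radial coordinate $r$ via $dr=\tfrac{dz}{2\sqrt{F}}$, identifies $k=\tfrac{d\sqrt{F}}{dr}=\tfrac{dF}{dz}$ by the chain rule, and reads off the cone angle $2\pi|k|$ to determine the required $\mathbb{Z}/|k|$ quotient. Your treatment of the self-intersection number via the normal $\mathcal{O}(k)$-bundle is actually more detailed than the paper's, which simply asserts it; your caveat about the orientation bookkeeping is apt, as the paper leaves that implicit as well.
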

	
	It is possible that two bolts occur, one at $z_0$ and one at $z_1$ where $z_0<z_1$, as in Figure \ref{FigTwoBolts}.
	We certainly must have $\frac{dF}{dz}\ge0$ at $z_0$ and $\frac{dF}{dz}\le0$ at $z_1$ so the bolts, if they are both smooth after resolution, must have self-intersection numbers $k$ and $-k$ where $k\in\mathbb{Z}\setminus\{0\}$.
	With either complex structure $J^+$, $J^-$ this is the same ``odd'' Hirzebruch surface $\Sigma_{2k-1}$; see \cite{KM71}.\\
	
	\noindent\begin{figure}[ht]
		\vspace{-0.3in}
		\hspace{0.75in}
		\begin{minipage}[l]{0.45\textwidth}
			\caption{
				\it Depiction of a ``nut.''
				With $F\approx1$, as $z\rightarrow+\infty$ the 3-sphere is asymptotically round, not squashed.
				Then the conformal factor $e^{-z}$ collapses $z=\infty$ to a point at a finite distance away.
				\label{FigNutEnd}
			} 
		\end{minipage} 
		\hspace{0.125in}
		\begin{minipage}[c]{0.2\textwidth}
			\includegraphics[scale=0.35]{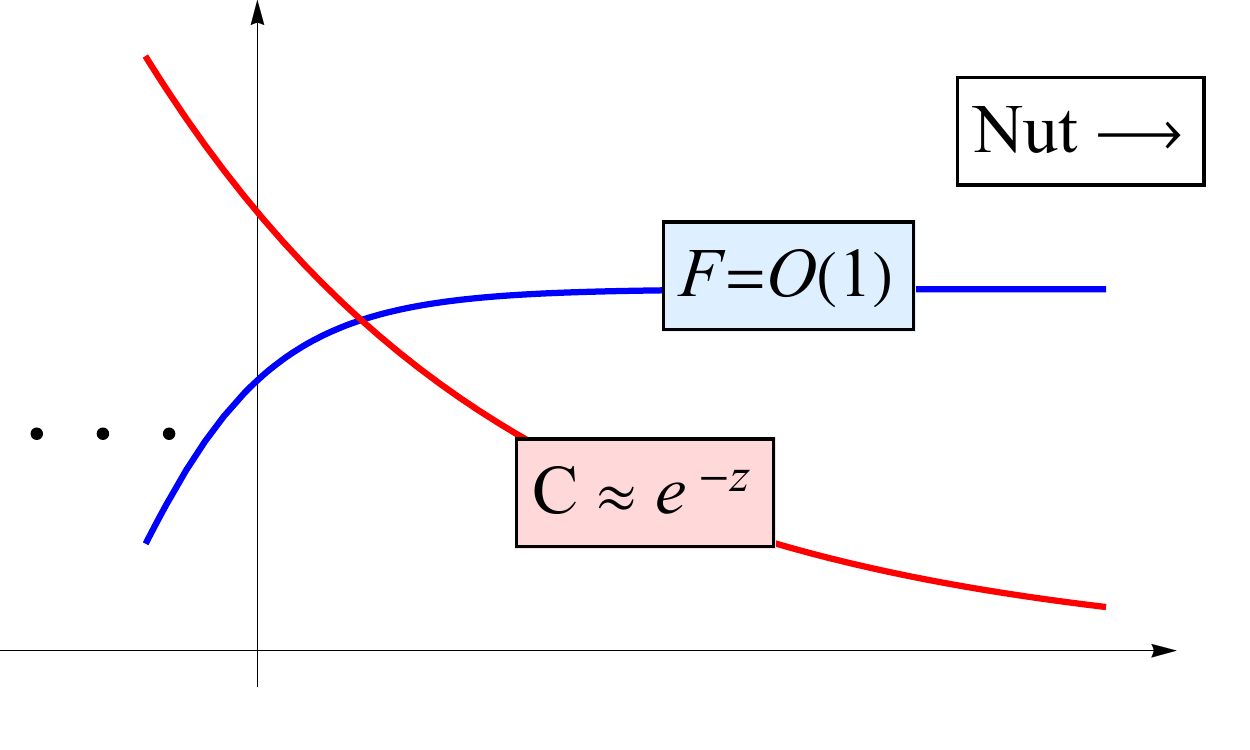}
		\end{minipage}
	\end{figure}
	
	The second topological change, called a ``nut'', occurs when the entire 3-sphere contracts to a point, and the nearby topology is that of a ball in $\mathbb{R}^4$.
	This occurs when $C$ becomes zero but $F$ remains finite.
	When $\omega$ is K\"ahler and $C=C_0e^{-z}$, a nut may occur at $z=+\infty$; this is depicted in Figure \ref{FigNutEnd}.
	When $\omega^-$ is K\"ahler and $C=C_0e^{-z}$ a nut may occur at $z=-\infty$.
	In the non-K\"ahler case, conceivably a nut could form if $C(z)=0$ at some finite $z$, but this produces curvature singularities.
	By considering the change of coordinates $dr=\sqrt{C/4F}\,dz$ near $z=\infty$ and examining the asymptotics of $C$ and $F$, the following proposition is straightforward.
	\begin{proposition}[The Nut condition at $z=\infty$]
		Assume $C=O(e^{-z})$ and $F=1+O(e^{-z})$ as $z\rightarrow\infty$.
		Adding a point at $z=\infty$, this point is a finite distance away and has a neighborhood with bounded curvature and the topology of a ball.
	\end{proposition}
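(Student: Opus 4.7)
The plan is to introduce the geodesic radial coordinate $r$ defined by $dr = \sqrt{C/(4F)}\,dz$, with normalization $r \to 0$ as $z \to \infty$, and then read off all three claims (finite distance, ball topology, bounded curvature) from the asymptotic behavior of the metric in these coordinates.

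First I would verify that $z = \infty$ lies at finite distance. Under the hypotheses $C = O(e^{-z})$ and $F = 1 + O(e^{-z})$, we have $\sqrt{C/(4F)} = O(e^{-z/2})$, whose integral from any finite $z_0$ out to $\infty$ converges; more precisely, writing $C = c_0 e^{-z}(1 + O(e^{-z}))$ for some constant $c_0 > 0$, we obtain $r = \sqrt{c_0}\,e^{-z/2}(1 + O(e^{-z}))$. In particular $r$ is a smooth positive function of $e^{-z/2}$ near $z = \infty$, vanishing there, so the range of $r$ is a finite half-open interval $(0, r_0]$ on which $r \mapsto z$ is a diffeomorphism.

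Next I would rewrite the metric (\ref{EqnMetrInZ}) in $r$-coordinates as
\begin{equation}
g = dr^2 + C(z(r))F(z(r))\,(\eta^1)^2 + C(z(r))\bigl((\eta^2)^2 + (\eta^3)^2\bigr).
\end{equation}
Since $C = c_0 e^{-z}(1 + O(e^{-z}))$ and $e^{-z} = (r^2/c_0)(1 + O(r^2))$, one gets $C = r^2 + O(r^4)$; and $F = 1 + O(e^{-z}) = 1 + O(r^2)$. Thus to leading order
\begin{equation}
g = dr^2 + r^2\bigl((\eta^1)^2 + (\eta^2)^2 + (\eta^3)^2\bigr) + O(r^4),
\end{equation}
which is the flat Euclidean metric on $\mathbb{R}^4$ in polar coordinates around the origin, up to higher-order terms. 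Adding the point $r = 0$ and passing to the standard Cartesian chart (\ref{EqnCoordDefs}) therefore exhibits a neighborhood diffeomorphic to a ball in $\mathbb{R}^4$; this handles the topology assertion.

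For bounded curvature it suffices to check that the coefficient functions $CF$ and $C$, viewed as functions of $r$, are $C^2$ at $r = 0$ with values and derivatives matching those of the flat model $r^2$. This follows from the fact that $C$ and $F$ have asymptotic expansions in the smooth variable $e^{-z} = (r^2/c_0)(1 + O(r^2))$, so $C = r^2(1 + a_2 r^2 + \cdots)$ and $CF = r^2(1 + b_2 r^2 + \cdots)$ with even powers of $r$, which is precisely the condition needed for the metric to extend smoothly (at least $C^2$) across $r = 0$ in Cartesian coordinates. Once this extension is in place, the components of the Riemann tensor are continuous functions on a compact neighborhood of the nut and are therefore bounded. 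The only real subtlety, and thus the main obstacle, is handling polar coordinate degeneracy at $r = 0$: one must check that the relevant coefficients occur as even analytic functions of $r$ so that the metric pulls back smoothly under the polar-to-Cartesian map, rather than merely being continuous — but the hypothesis that the $O(e^{-z})$ error admits a full asymptotic expansion in $e^{-z}$ (implicit in the context of $U(2)$-invariant solutions of the ODE systems studied) delivers exactly this.
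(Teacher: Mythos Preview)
Your proposal is correct and follows exactly the approach the paper indicates: the paper does not give a detailed proof of this proposition but simply remarks that ``by considering the change of coordinates $dr=\sqrt{C/4F}\,dz$ near $z=\infty$ and examining the asymptotics of $C$ and $F$, the following proposition is straightforward.'' Your argument fleshes out precisely this hint, and your honest caveat about needing an expansion in $e^{-z}$ (rather than a bare $O(e^{-z})$ bound) for the smoothness/curvature step is well taken---in the paper's setting the relevant $F$ and $C$ are explicit analytic functions of $e^{\pm z}$, so this is implicitly assumed.
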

	

	\subsubsection{ALE, ALF, and cusp-like ends} \label{SubSubSecALEALFAndCusp}
	
	\noindent\begin{figure}[ht]
		\hspace{0.75in}
		\begin{minipage}[c]{0.425\textwidth}
			\caption{
				\it Depiction of a K\"ahler ALE end.
				As $z\rightarrow-\infty$, because $F\approx1$ the $\mathbb{S}^2$-factor is approximately round.
				The conformal factor grows like $e^{z}$, and the metric is asymptotically locally Euclidean (ALE).
				\label{FigALEEnd}
			}
		\end{minipage} \hspace{0.1in}
		\begin{minipage}[c]{0.2\textwidth}
			\includegraphics[scale=0.4]{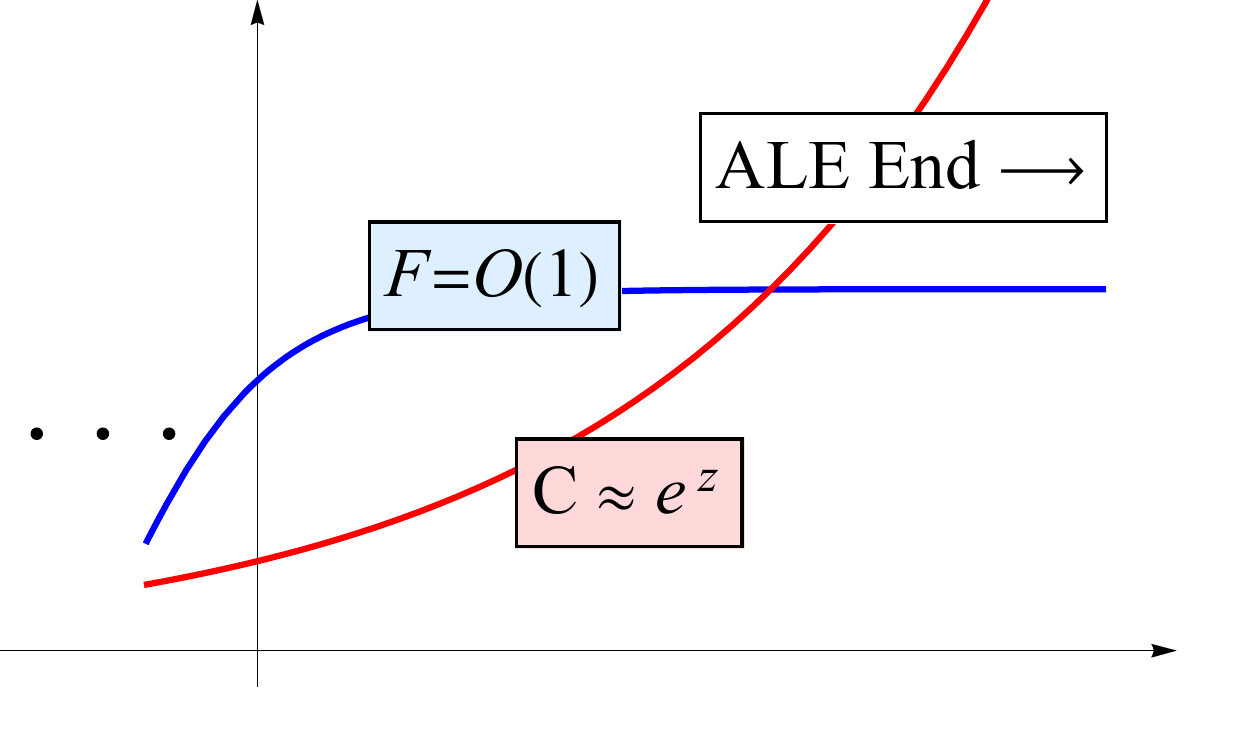}
		\end{minipage} \hfill
	\end{figure}
	
	If $g$ is K\"ahler with respect to $J^+$ so $C=C_0e^{-z}$, an ALE end can occur as $z\rightarrow-\infty$, as depicted in Figure \ref{FigALEEnd}.
	If instead $g$ is K\"ahler with respect to $J^-$ then replacing $z$ by $-z$ Figure \ref{FigALEEnd} is flipped and an ALE end occurs as $z\rightarrow\infty$.
	\begin{proposition}
		Assume $g$ is K\"ahler with respect to $J^+$, so $C=e^{-z}$.
		If $F=1+O(z^{-2})$ as $z\rightarrow-\infty$, the metric is ALE with better-than-quadratically decaying curvature.
	\end{proposition}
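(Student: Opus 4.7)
\emph{Plan.} First I would introduce the radial coordinate $r=e^{-z/2}=\sqrt{C}$. Inverting, $z=-2\log r$ and $dz=-2r^{-1}dr$, so the K\"ahler metric (\ref{EqnMetrInZ}) becomes
\begin{equation}
g \;=\; F^{-1}dr^2 \;+\; r^2\bigl(F(\eta^1)^2+(\eta^2)^2+(\eta^3)^2\bigr).
\end{equation}
Since $z\to-\infty$ corresponds to $r\to\infty$, and $F=1+O(z^{-2})=1+O((\log r)^{-2})\to1$, this metric limits on each $r$-level $\mathbb{S}^3/\Gamma$ (where $\Gamma\subset SU(2)$ is whichever discrete identification the topology of the end imposes) to the flat Euclidean metric $dr^2+r^2((\eta^1)^2+(\eta^2)^2+(\eta^3)^2)$ on the cone $(\mathbb{R}^4\setminus K)/\Gamma$. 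This directly exhibits the ALE asymptotic model and the diffeomorphism of the end with a Euclidean annulus quotient.

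Second I would estimate the curvature from Proposition~\ref{PropCurvatureIntro}. Writing $h=F-1$ and using $\mathcal{L}^\pm(1)=1$, we have $\mathcal{L}^\pm(F)-1=\mathcal{L}^\pm(h)$, a fixed constant-coefficient combination of $h$, $h'$, $h''$. Under the natural reading that $h=O(z^{-2})$ carries the matching derivative bounds (automatic for smooth $F$ admitting an asymptotic expansion in $1/z$ at $z=-\infty$, as is the case for all ODE-derived families considered in this paper), this yields $\mathcal{L}^\pm(F)-1=O(z^{-2})$ and hence
\begin{equation}
|W^\pm| \;=\; O\bigl(C^{-1}z^{-2}\bigr) \;=\; O\bigl(e^{z}z^{-2}\bigr) \;=\; O\bigl(r^{-2}(\log r)^{-2}\bigr).
\end{equation}
The scalar curvature formula of Proposition~\ref{PropRicForm} and the Ricci formula of Proposition~\ref{PropRicci} (specialized to $C=e^{-z}$) produce the same size bound $O(r^{-2}(\log r)^{-2})$ for $s$ and for each frame-component of $\cRic$. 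Assembling, $|\Riem|=o(r^{-2})$, which is the promised better-than-quadratic decay.

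The main obstacle is regularity: the bald hypothesis $F-1=O(z^{-2})$ does not by itself imply $F'=O(z^{-3})$ or $F''=O(z^{-4})$, yet the curvature depends on $F''$. I would address this either by strengthening the hypothesis to $C^2$-decay, $F-1=O_2(z^{-2})$, or (more natural in context) by noting that in every application $F$ is smooth and solves a linear ODE of the type listed in Proposition~\ref{IntroPropExtrBachFlat}, forcing an asymptotic expansion that gives derivative control for free. A secondary point worth flagging is that the metric approaches flat only logarithmically, so ``ALE'' here is meant in the loose sense that the asymptotic model is $\mathbb{R}^4/\Gamma$ with flat metric and the full curvature tensor decays strictly faster than the critical rate $r^{-2}$, rather than in the classical polynomial-decay sense.
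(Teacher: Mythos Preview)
Your proposal is correct and follows essentially the same route as the paper: introduce a radial coordinate $r\sim e^{-z/2}$ to exhibit the flat asymptotic model, then read off $|\Riem|=O(e^{z}z^{-2})=o(r^{-2})$ from the explicit curvature formulas of Propositions~\ref{PropRicForm} and~\ref{PropWeylForm}. The paper uses the arc-length parameter rather than your simpler $r=e^{-z/2}$, and it silently assumes the derivative control $F_z,F_{zz}=O(z^{-2})$ that you rightly flag as an implicit hypothesis.
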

	\begin{proof}
		Letting $r$ be the distance function that solves $dr=\frac12\sqrt{C/F}dz=\frac12e^{-\frac12z}(1+O(z^{-2}))dz$, by assumption we have $r=e^{-\frac12z}+O(z^{-1})$.
		Then $C=e^{-z}=r^2+O(r^{-4})$, so the metric is  $g\approx{}dr^2+(r^2+O(r^{-4}))d\sigma_{\mathbb{S}^3}$ as $r\rightarrow\infty$, so it is ALE.
		To check how quickly curvature decays, from Proposition \ref{PropRicForm}
		\begin{equation}
			\begin{aligned}
				\rho
				&\;=\;-2C^{-1}\left(\frac{1}{2}F_{zz}-\frac32F_z+F-1\right)\omega
				+2C^{-1}\left(\frac{1}{2}F_{zz}-\frac12F_{z}-F+1\right)\omega^-
			\end{aligned}
		\end{equation}
		so asymptotically $\rho\approx{}e^{z}O(z^{-2})\omega+e^{z}O(z^{-2})\omega^-=o(r^{-2})$.
		The expressions for $\vert{}W^+\vert$, $\vert{}W^-\vert$ in (\ref{EqnWeylNorm}) give the same decay rates, so we conclude the Riemann tensor decays quadratically: $\vert{}\Riem\vert=o(r^{-2})$.
	\end{proof}

	
	\begin{figure}[ht]
		\includegraphics[scale=0.35]{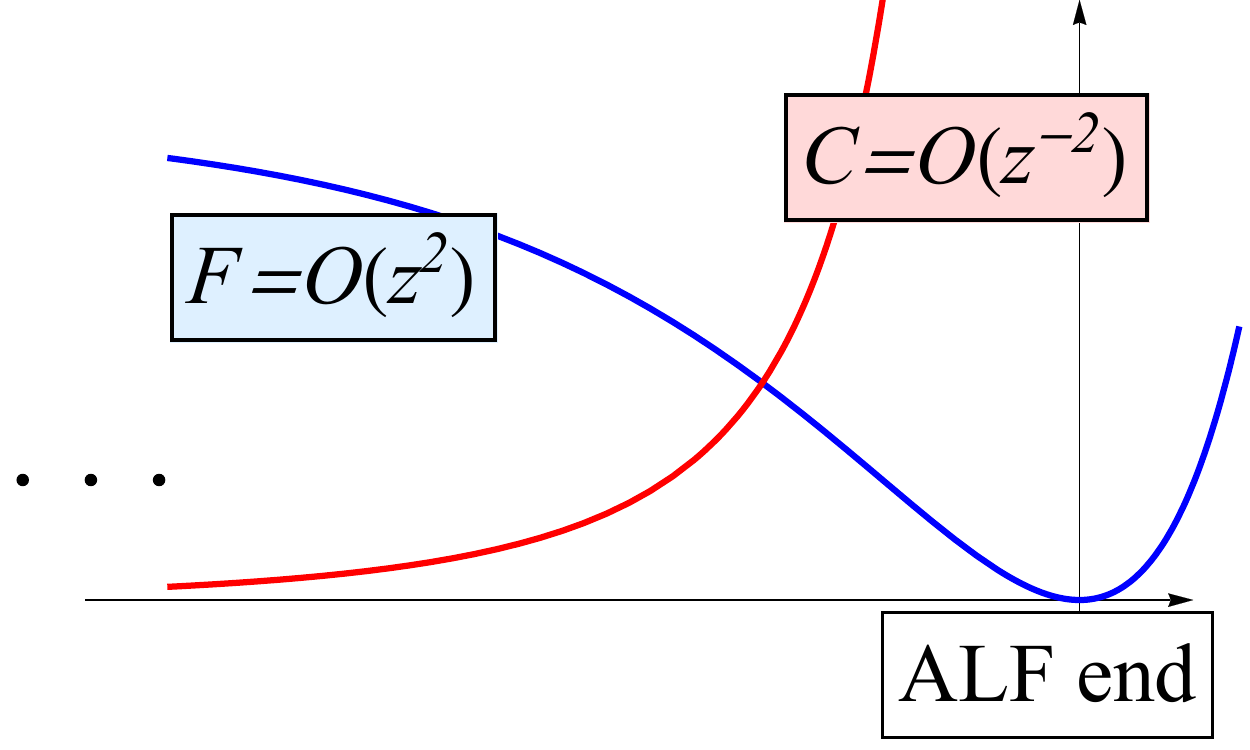}
		\hspace{0.5in}
		\includegraphics[scale=0.35]{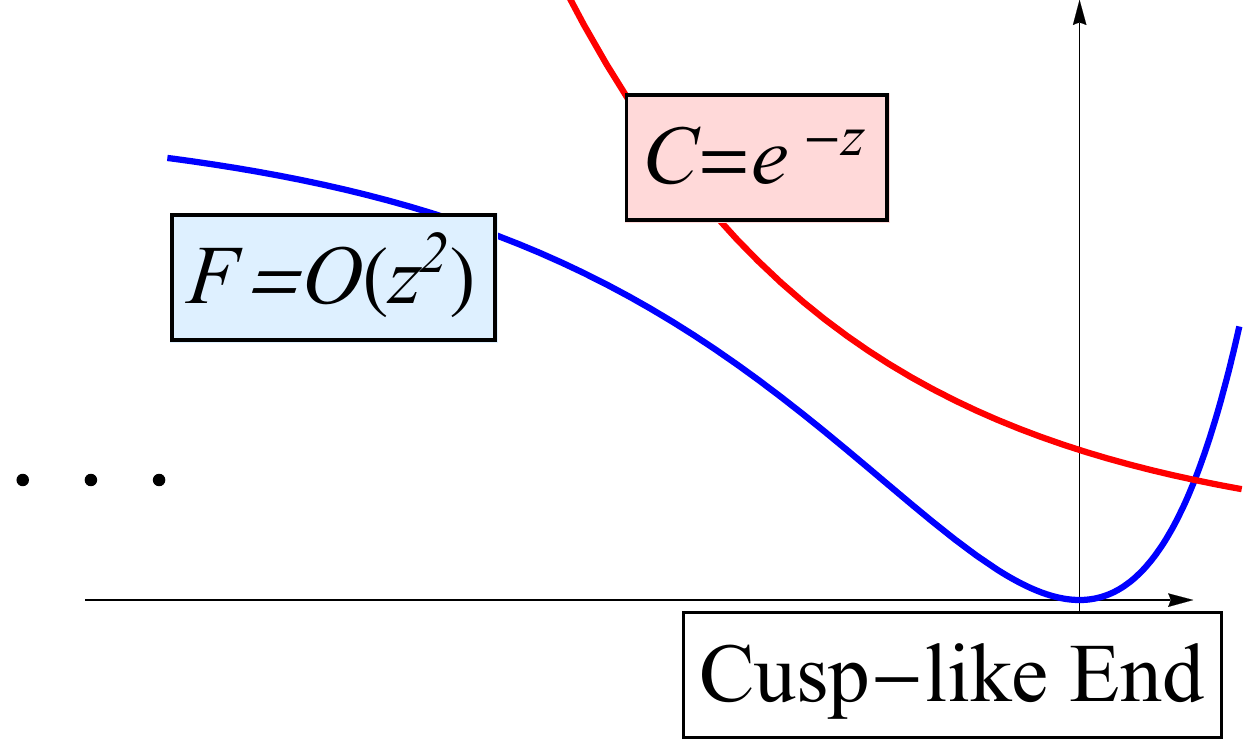}
		\caption{\it An ALF end and a cusp-like end.}
		\label{FigALFandCusp}
	\end{figure}
	
	These are the two kinds of complete ends that can possibly occur at finite coordinate values in the K\"ahler setting.
	The ALF end is familiar from classical examples: these have cubic volume growth, cubic curvature decay, and $\mathbb{R}^3$ tangent cone at infinity.
	See for example \cite{Haw77}, \cite{EGH80}, \cite{CK}, \cite{Etesi}.
	By a ``cusp-like'' end, we mean an end that locally resembles a Riemannian product of a tractrix of revolution (sometimes called a pseudosphere) with a sphere.
	Toward infinity the scalar and Weyl curvatures decrease rapidly, whereas the Ricci curvature approaches a constant bilinear form of signature $(-,-,+,+)$.
	These two kinds of ends are conformal to each other: we will have $C=\frac{e^{z}}{(1-e^{z})^2}$ in the ALF case and $C=e^{-z}$ or $C=e^{z}$ in the cusp-like case.
	$F$ in both cases has a second-order zero at $z=0$.
	See Figure \ref{FigALFandCusp}.
	
	\begin{proposition}
		Assume $F=z^2+O(z^3)$ near $z=0$.
		
		If $C$ remains finite then the manifold forms a complete, cusp-like end near $z=0$.
		Asymptotically the Hopf fiber shrinks to zero and the metric has the local geometry of the product of a psuedosphere times a sphere.
		
		If $C=O(z^{-2})$ then the the metric forms an ALF end near $z=0$.
	\end{proposition}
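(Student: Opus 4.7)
The plan is to convert to a distance-type coordinate and read off the asymptotics of the metric block by block. Define $r$ by $dr=\tfrac12\sqrt{C/F}\,dz$, so that the metric in (\ref{EqnMetrInZ}) becomes
\begin{equation*}
g\;=\;dr^{2}+CF(\eta^{1})^{2}+C\bigl((\eta^{2})^{2}+(\eta^{3})^{2}\bigr),
\end{equation*}
with the understanding that $C=C(z(r))$, $F=F(z(r))$. Under $F=z^{2}+O(z^{3})$, $\sqrt{F}=|z|+O(z^{2})$, so the integrand in $dr$ blows up like $\sqrt{C}/(2|z|)$ near $z=0$; the two cases then turn on whether $\sqrt{C}$ is $O(1)$ or $O(|z|^{-1})$.

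For the cusp-like case, assume $C\to C_{0}>0$. Then $r=\tfrac12\sqrt{C_{0}}\log z+O(z)$, so $z=0$ is at infinite distance and as $r\to-\infty$ we have $z=e^{2r/\sqrt{C_{0}}}(1+o(1))$. The $(r,\eta^{1})$-block is
\begin{equation*}
dr^{2}+CF(\eta^{1})^{2}\;=\;dr^{2}+C_{0}e^{4r/\sqrt{C_{0}}}(\eta^{1})^{2}(1+o(1)),
\end{equation*}
which is exactly the pseudosphere (tractrix-of-revolution) metric on $\mathbb{R}\times S^{1}$. The transverse $(\eta^{2})^{2}+(\eta^{3})^{2}$ block is conformally round on $S^{2}$ (recall from Section \ref{SubSecCxStructs} that $(\eta^{2})^{2}+(\eta^{3})^{2}=\tfrac14(d\theta^{2}+\sin^{2}\theta\,d\varphi^{2})$) and carries the constant conformal factor $C_{0}$ plus $o(1)$. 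Thus $g$ is asymptotic to the Riemannian product of a pseudosphere and a round $S^{2}$, and the Hopf fiber length $\sqrt{CF}=\sqrt{C_{0}}\,|z|$ shrinks to zero. To confirm the claim about the curvatures, substitute $F=z^{2}+O(z^{3})$, $C=C_{0}+O(z)$ into the scalar and Weyl formulas of Propositions \ref{PropRicci} and \ref{PropWeylForm}: each contains a leading $C^{-1}$ or $C^{-3/2}$ that is bounded, times a factor $\mathcal{L}^{\pm}(F)-1$ or $F_{zz}+\tfrac12F-2$ whose leading term is a finite constant, giving bounded scalar and Weyl curvature (and in fact the Weyl trace-free and scalar tensors decay once the $O(z^{3})$ corrections are accounted for), while the trace-free Ricci from (\ref{EqnTfRicPre}) has a dominant $-C^{-1}(-\tfrac34F+1)\cdot(I_{2})$ term approaching the constant bilinear form on $I_{2}=(\sigma^{0})^{2}+(\sigma^{1})^{2}-(\sigma^{2})^{2}-(\sigma^{3})^{2}$ of signature $(-,-,+,+)$, matching the product geometry.

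For the ALF case, assume $C=c\,z^{-2}+O(z^{-1})$. Then $dr=\tfrac12\sqrt{c}\,z^{-2}dz+O(z^{-1})$, so $r=-\tfrac12\sqrt{c}/z+O(\log z)$; again $z=0$ is infinitely far, with $z\sim -\sqrt{c}/(2r)$ as $r\to+\infty$. In this asymptotic regime
\begin{equation*}
CF(\eta^{1})^{2}\;=\;c(1+o(1))(\eta^{1})^{2},\qquad C\bigl((\eta^{2})^{2}+(\eta^{3})^{2}\bigr)\;\sim\;\tfrac{4r^{2}}{1}(\eta^{2})^{2}+(\eta^{3})^{2}\cdot\text{const},
\end{equation*}
so the $\eta^{1}$-fiber has bounded length while the transverse $S^{2}$ swells like $r^{2}$. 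Using again $(\eta^{2})^{2}+(\eta^{3})^{2}=\tfrac14 g_{S^{2}}$, the metric is asymptotic to $dr^{2}+r^{2}g_{S^{2}}+c(\eta^{1})^{2}$, i.e.\ a circle bundle over $\mathbb{R}^{3}\setminus\{0\}$ with bounded-length fiber---precisely an ALF end. Curvature decay can be read off Propositions \ref{PropRicci} and \ref{PropWeylForm}: the overall factor $C^{-1}\sim z^{2}\sim r^{-2}$ appears together with bounded coefficients, and pushing the expansion of $F$ and $C$ to the next order shows the leading $r^{-2}$-terms cancel, leaving $|\Riem|=O(r^{-3})$.

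The only genuinely delicate step is the identification of the $(r,\eta^{1})$-block in the cusp case with a pseudosphere (and the verification that the Ricci signature claim really holds up after $F$ is expanded past leading order); both are straightforward substitutions, but keeping track of which terms contribute at leading order---versus which are swallowed by the $o(1)$ remainders coming from the $O(z^{3})$ error in $F$ and from the subleading behavior of $C$---is where one must be careful. Everything else is substitution into formulas already established earlier in this section.
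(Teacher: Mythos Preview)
Your argument is correct and mirrors the paper's: pass to the arc-length coordinate $r$, read off the asymptotic metric block by block, and then check the curvature claims via the formulas of Proposition~\ref{PropCurvatureIntro} (the paper does the Ricci step in the K\"ahler gauge using $\rho$ and then invokes conformal change, whereas you use the general $\cRic$ formula directly, but this is the same computation). Two small imprecisions worth tightening: in the cusp case $\mathcal{L}^\pm(F)-1$ and $F_{zz}+\tfrac12F-2$ are already $O(z)$ from $F=z^2$ alone (the decay does not wait for the $O(z^3)$ corrections), and the dominant $I_2$-coefficient of $\cRic$ also receives $\tfrac12F_{zz}\to1$, so the limit is $-4/C_0$ rather than $-2/C_0$---but neither affects your conclusions.
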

	\begin{proof}
		The distance function $r$ satisfies $dr=\frac12\sqrt{\frac{C}{F}}dz$ so in the cusp-like case, where $C$ remains finite, then $\sqrt{F}=O(z)$ gives $r\approx\frac12\log(-z)$ near $0$ and indeed the distance to $0$ is infinite so the metric is complete.
		From $\omega\wedge\omega=-C^2dz\wedge\eta^1\wedge\eta^2\wedge\eta^3$, we see the volume is finite.
		Checking the tensors $W^\pm$, with $F=z^2+O(z^3)$ we find that $\mathcal{L}^\pm(F)-1\;=\;O(z)$ and so $\vert{}W^\pm\vert\searrow0$ as $z\rightarrow0$.
		In the K\"ahler case $\rho$ is a multiple of $\omega$ added to a multiple of $\omega^-$.
		The multiple on $\omega$ is also $O(z)$, but the multiple on $\omega^-$, by (\ref{EqnRhoSeparation}), approaches $4C^{-1}$.
		This justifies the assertion that, in the K\"ahler case, the local geometry approaches a $+1$ times a $-1$ curvature surface.
		In the non-K\"ahler case, the usual conformal change formulas for Ricci curvature shows this remains true.
		
		Next we verify that when $C=z^{-2}+O(1)$ near $z=0$, the metric has an ALF end.
		Then $dr=\frac12\sqrt{\frac{C}{F}}dz=\left(\frac12z^{-2}+O(1)\right)dz$ so $r=z^{-1}+O(z)$ near $z=0$.
		To compute volume, we use $C^{\frac32}=O(r^3)$ and $F^{\frac12}=O(z)=O(r^{-1})$, so we have
		\begin{equation}
			dVol
			\;=\;-C^{\frac32}F^{\frac12}dr\wedge{}d\sigma_{\mathbb{S}^3}
			\;\approx\;
			r^2dr\wedge{}d\sigma_{\mathbb{S}^3}. \label{EqnALFVolForm}
		\end{equation}
		Integrating (\ref{EqnALFVolForm}) and noting that $r$ is a distance function, indeed we observe cubic volume growth.
		Next we check curvature decay.
		From (\ref{EqnWeylNorm}) we have $\mathcal{L}^{\pm}(F)-1=O(1)$ so that $\vert{}W^+\vert\approx\frac{32}{3}C^{-2}=O(z^2)=O(r^{-2})$ and similarly for $\vert{}W^-\vert$.
		Inserting $F$, $C$ into the Ricci form $\rho$ from (\ref{EqnRicFormComp}), we see Ricci curvature decays quadratically.
	\end{proof}
	
	We close by remarking that ALE ends and nuts are conformal to each other (by changing between $twoC=e^{-z}$ and $C=e^z$).
	Similarly ALF ends and cusp-like ends are conformal to each other.
	
	\subsubsection{Asymptotically Einsteinian ends, and incomplete ends}
	
	\begin{figure}[ht]
		\centering
		\includegraphics[scale=0.4]{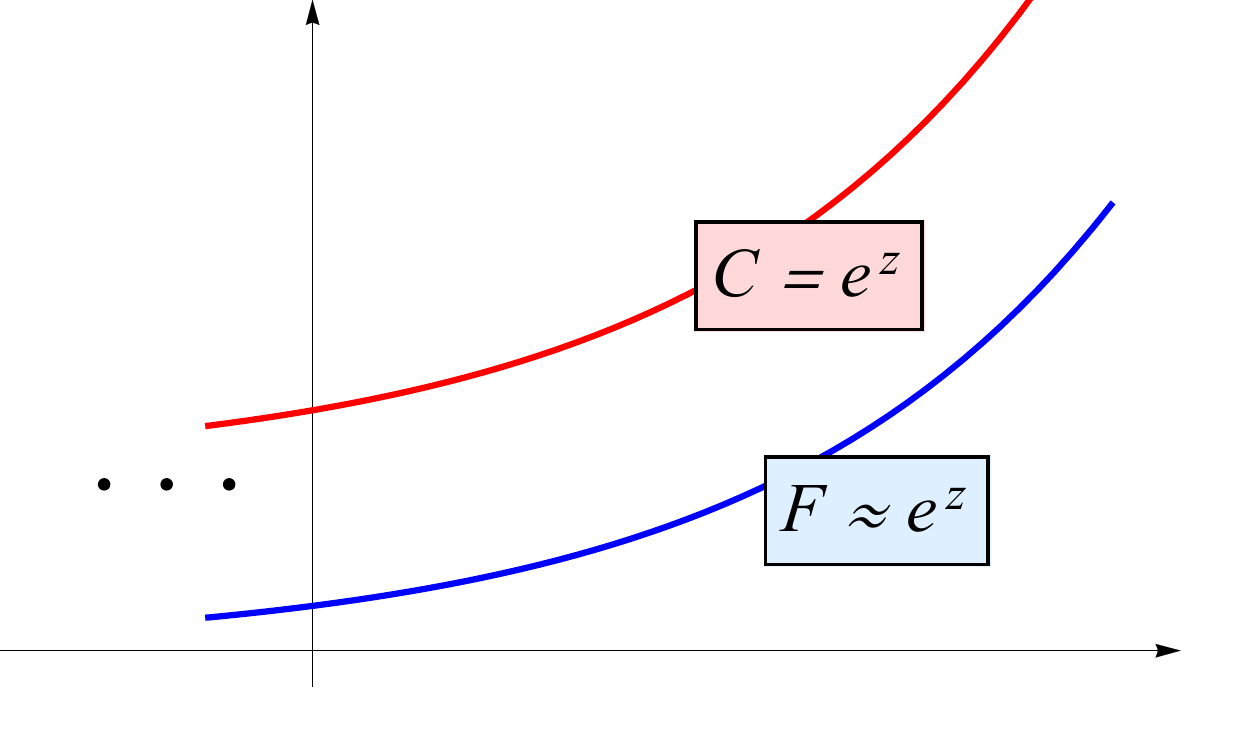}
		\hspace{0.2in}
		\includegraphics[scale=0.4]{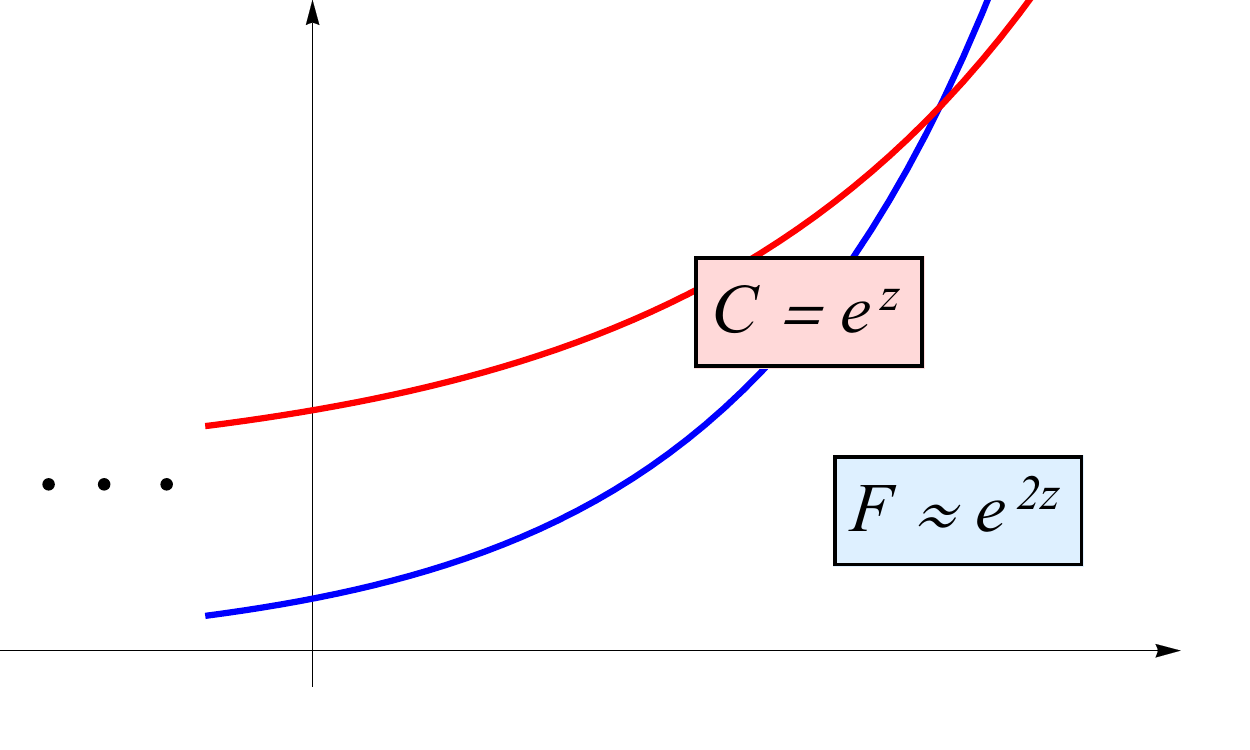}
		\caption{\it Left: an asymptotically Einsteinian end as $z\rightarrow+\infty$.
		Right: an end with a curvature singularity a finite distance away as $z\rightarrow+\infty$.}
		\label{FigEinstAndIncompEnd}
	\end{figure}
	
	Our special metrics all obey $F=1+\frac12C_1e^{-2z}+C_2e^{-z}+C_3e^{z}+\frac12C_4e^{2z}$ and there is the possibility that $F=O(e^{-z})$ or $O(e^{-2z})$ as $z\rightarrow-\infty$.
	In the K\"ahler case $C=C_0e^{-z}$ (or when $J^-$ is the complex structure, $C=C_0e^z$), and $F$ might grow at about the same rate as $C$, or a faster rate.
	When $C=C_0e^{-z}$ and $F=O(e^{-z})$ the end is asymptotically Einsteinian with negative Einstein constant, and when $F=O(e^{-2z})$ the end has a curvature singularity a finite distance away.
	\begin{lemma}
		Assume $g$ is K\"ahler with respect to $J^+$.
		Then as $z\rightarrow\infty$
		\begin{itemize}
			\item if $F=O(e^{-z})$ the metric is asymptotically Einsteinian with negative Einstein constant.
			\item if $F=O(e^{-2z})$ the metric is incomplete and $s\searrow-\infty$ near this end.
		\end{itemize}
		The same holds if $g$ is K\"ahler with respect to $J^-$, after replacing $z$ by $-z$.
	\end{lemma}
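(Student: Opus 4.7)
The plan is to substitute the asymptotic form of $F$ directly into the K\"ahler formulas of Proposition \ref{PropRicForm} and read off the curvature behavior, together with a one-line computation of the distance element $dr=\tfrac12\sqrt{C/F}\,dz$. Throughout, $C=C_0e^{-z}$ with $C_0>0$ (the $J^+$ K\"ahler condition of Lemma \ref{LemmaClosedJ}); the asymptotic regime matching the paragraph preceding the lemma is $z\to -\infty$, where $C\to\infty$ and $F$ grows like $e^{-z}$ or $e^{-2z}$ depending on which coefficients in the ansatz $F=1+\tfrac12C_1e^{-2z}+C_2e^{-z}+C_3e^{z}+\tfrac12C_4e^{2z}$ are nonvanishing. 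The $J^-$ statement then follows by the $z\mapsto -z$ involution under which $J^+\leftrightarrow J^-$ and $\mathcal{L}^+\leftrightarrow \mathcal{L}^-$.

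First I would dispose of the asymptotically Einsteinian case. Assume the leading term is $F\sim\beta e^{-z}$ with $\beta>0$ (so $C_1=0$, $C_2=\beta$). A direct computation using $\mathcal{L}^+=(-\tfrac12\partial_z+1)(-\partial_z+1)$ shows $\mathcal{L}^+(\beta e^{-z})=3\beta e^{-z}$ and, by the same token, $(-\tfrac12\partial_z+1)(\partial_z+1)(\beta e^{-z})=0$; since $\mathcal{L}^+$ and its companion annihilate $e^{z}$ and $e^{2z}$ as well, the subdominant $C_3,C_4$ terms make no contribution to either operator, while the constant $1$ contributes $1$ in each case. Plugging into Proposition \ref{PropRicForm} gives
\begin{equation*}
s\;=\;-\frac{8}{C_0e^{-z}}(3\beta e^{-z}-1)\;\xrightarrow[z\to-\infty]{}\;-\frac{24\beta}{C_0},
\qquad
\rho\;=\;-\frac{6\beta}{C_0}\omega^+ + O(e^{2z})\,\omega^-,
\end{equation*}
so Ric becomes proportional to the K\"ahler form at rate $e^{2z}$, establishing asymptotic K\"ahler--Einstein with negative Einstein constant $-6\beta/C_0$.

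For the incomplete case, assume $F\sim \tfrac12 C_1 e^{-2z}$ with $C_1>0$. The same operator computation now gives $\mathcal{L}^+(\tfrac12 C_1 e^{-2z})=3C_1e^{-2z}$ (the factors $\tfrac12\cdot 4+\tfrac32\cdot 2+\tfrac12\cdot 1$ collect to $3$), so $\mathcal{L}^+(F)-1\sim 3C_1 e^{-2z}$ and
\begin{equation*}
s\;\sim\;-\frac{8}{C_0e^{-z}}\cdot 3C_1e^{-2z}\;=\;-\frac{24 C_1}{C_0}e^{-z}\;\searrow\;-\infty.
\end{equation*}
For incompleteness, $\sqrt{C/F}\sim\sqrt{2C_0/C_1}\,e^{z/2}$, so the distance to $z=-\infty$ is
\begin{equation*}
r(z_0)\;=\;\int_{-\infty}^{z_0}\tfrac12\sqrt{C/F}\,dz\;\sim\;\sqrt{2C_0/C_1}\;e^{z_0/2}\;<\;\infty,
\end{equation*}
placing the curvature blow-up at finite distance. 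The $J^-$ case is immediate from the involution $z\mapsto -z$.

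There is no real obstacle; the content is a careful bookkeeping of which terms in the ansatz (or in any $F$ matching the stated decay rate) survive under $\mathcal{L}^+$. The only subtlety worth flagging is that the stated "$z\to\infty$" appears to be a transcription slip for "$z\to -\infty$"---otherwise, combined with $C=C_0e^{-z}$, the hypothesis $F=O(e^{-2z})$ would force $F\to 0$ against a growing $C$, producing $s\to +\infty$ rather than $-\infty$; interpreting the limit as $z\to -\infty$ (equivalently, passing through the $J^-$ side by $z\mapsto -z$) makes the statement and its proof entirely consistent with the preceding paragraph and with Figure \ref{FigEinstAndIncompEnd}.
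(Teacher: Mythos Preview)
Your approach is essentially the paper's: substitute the asymptotic form of $F$ into the Ricci-form formula of Proposition~\ref{PropRicForm}, read off the limiting Einstein constant in the first case, and in the second case compute $\sqrt{C/F}\sim e^{z/2}$ so the distance to $z=-\infty$ is finite while curvature blows up. You are also right that the lemma's ``$z\to\infty$'' is a slip for $z\to-\infty$; the paper's own proof works at $z\searrow-\infty$.

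Two small slips to clean up. First, the companion operator $(-\tfrac12\partial_z+1)(\partial_z+1)$ does \emph{not} annihilate $e^{z}$ (it returns $e^{z}$), so the $C_3$ term does contribute to the $\omega^-$ coefficient; after division by $C$ this is still $O(e^{2z})\to 0$, so your conclusion survives, but the reason you gave is off. Second, your parenthetical arithmetic ``$\tfrac12\cdot4+\tfrac32\cdot2+\tfrac12\cdot1$'' sums to $5.5$; the correct bookkeeping from $\mathcal{L}^+=\tfrac12\partial_z^2-\tfrac32\partial_z+1$ on $\tfrac12C_1e^{-2z}$ is $\tfrac12\cdot4+\tfrac32\cdot2+1=6$, and $6\cdot\tfrac12C_1=3C_1$ as you correctly state.
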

	\begin{proof}
		Proposition \ref{PropRicForm} states
		\begin{equation}
			\begin{aligned}
				\rho
				&=-\frac{2}{C}\left(\frac12F_{zz}-\frac32F_z+F-1\right)\omega^+
				+\frac{2}{C}\left(\frac12F_{zz}-\frac12F_{z}-F+1\right)\omega^-.
			\end{aligned} \label{EqnBreakupRho}
		\end{equation}
		If $F$ blows up like $C_3e^{-z}$ as $z\searrow-\infty$, then (\ref{EqnBreakupRho}) gives
		\begin{equation}
			\rho\;\approx\;-2C_0^{-1}e^{z}(3C_3e^{-z}-1)\omega^++2C_0^{-1}e^{z}\omega^-
		\end{equation}
		and asymptotically we obtain $\rho\approx-6C_3C_0^{-1}\omega^+$, so $g$ is indeed asymptotically Einstein with negative Einstein constant.
		
		Finally if $F$ blows up like $e^{2z}$ then the distance function satisfies $dr=\frac12\sqrt{C/F}\,dz\approx{}e^{-\frac12z}dz$ and so the distance is finite as $z\searrow-\infty$.
		Using equation (\ref{EqnBreakupRho}) again, we see that $\rho$ blows up like $e^{-z}$, so curvature also blows up at a finite distance.
	\end{proof}

	\section{Special Metrics}
	
	We use the computations of Section \ref{SubsecCurvQuants} to determine what conditions are needed to make a $U(2)$-invariant metric special or canonical.
	
	\subsection{Scalar Curvature}
	
	From (\ref{EqnScalGeneral}) of Proposition \ref{PropRicci}, specifying scalar curvature is equivalent to
	\begin{equation}
		sC^{\frac32}
		+4C^{\frac12}\left(\frac{\partial^2F}{\partial{z}^2}+\frac12F-2\right)
		+24\frac{\partial}{\partial{}z}\left(F\frac{\partial}{\partial{}z}C^{\frac12}\right)\;=\;0. \label{EqnScalSecond}
	\end{equation}
	for given $s=s(z)$.
	This relation is linear in $F$, and degenerates only when $F$ or $C$ reaches zero.
	It is underdetermined.
	Imposing, for example, the K\"ahler condition creates a critically determined equation.

	\subsection{Extremal K\"ahler metrics}
	
	A K\"ahler metric is extremal if the functional $g\mapsto\int{}s^2dVol$ is stable under those perturbations of $g$ that preserve the K\"ahler class.
	From \cite{Cal82} the Euler-Lagrange equations are that the gradient $\nabla{}s$ is a holomorphic vector field, but there are several ways to assess whether (\ref{EqnMetrInZ}) is extremal.
	We use the condition that a K\"ahler metric is extremal if and only if $J\nabla{}s$ is Killing.
	
	\begin{proposition}[The extremal condition] \label{PropExtremal}
		The metric (\ref{EqnMetsCxsKahls}) with complex structure $J^+$ is extremal K\"ahler if and only if $C=C_0e^{-z}$ and $\mathcal{L}^-(\mathcal{L}^+(F))=1$, which is
		\begin{equation}
			F\;=\;1
			+\frac12C_1e^{-2z}
			+C_2e^{-z}
			+C_3e^{z}
			+\frac12C_4e^{2z}. \label{EqnFforExtremal}
		\end{equation}
		Scalar curvature is $s=-\frac{24}{C_0}(C_1e^{-z}+C_2)$, and $g$ is CSC-K if also $C_1=0$.
		
		Likewise, the metric with complex structure $J^-$ is extremal K\"ahler if and only if $C=C_0e^{z}$ and again $\mathcal{L}^-(\mathcal{L}^+(F))=1$.
		It has scalar curvature $s=-\frac{24}{C_0}(C_3+C_4e^{z})$, and the metric is CSC-K when $C_4=0$.
	\end{proposition}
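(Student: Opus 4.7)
The plan is to invoke the Calabi characterization of extremal K\"ahler metrics: a K\"ahler metric is extremal if and only if $J\nabla s$ is a Killing field. First I would use Lemma \ref{LemmaClosedJ} to conclude that $g$ is K\"ahler with respect to $J^+$ exactly when $C=C_0e^{-z}$, since we need $\omega^+$ closed. Under this gauge, Proposition \ref{PropRicForm} gives the explicit formula $s=-\frac{8}{C_0}e^{z}(\mathcal{L}^+(F)-1)$, so $s$ is a function of $z$ alone.

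The second step is to compute $J^+\nabla s$. Since $s=s(z)$, the gradient is $\nabla s = s'(z)g^{zz}\partial_z=\frac{4Fs'(z)}{C}\partial_z$, and applying $J^+$ as in (\ref{EqnJStructs}) with $f=F$ yields $J^+\partial_z=\frac{1}{2F}e_1$. Therefore
\begin{equation}
J^+\nabla s \;=\; \frac{2s'(z)}{C}e_1 \;=\; \frac{2s'(z)e^z}{C_0}e_1.
\end{equation}
The left-invariant vector field $e_1$ is Killing on its own, so writing $J^+\nabla s=h(z)e_1$ we compute $\mathcal{L}_{he_1}g = dh\otimes_s g(e_1,\cdot) = CF\,dh\otimes_s\eta^1$, which vanishes if and only if $dh\equiv 0$. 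Thus the extremal condition reduces to $s'(z)e^z$ being constant, equivalently $s(z)=a+be^{-z}$ for constants $a,b$.

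The third step is to translate this back into an equation for $F$. Substituting $s=a+be^{-z}$ into $s=-\frac{8e^z}{C_0}(\mathcal{L}^+(F)-1)$ gives $\mathcal{L}^+(F)-1=\alpha e^{-z}+\beta e^{-2z}$ for some constants. A direct check shows that both $e^{-z}$ and $e^{-2z}$ lie in the kernel of $\mathcal{L}^-$ (since the roots of $\mathcal{L}^-$ as a characteristic polynomial are $-1$ and $-2$), and $\mathcal{L}^-(1)=1$, so applying $\mathcal{L}^-$ to the displayed relation gives exactly $\mathcal{L}^-(\mathcal{L}^+(F))=1$. Conversely, any $F$ satisfying this equation leads back to $s$ of the asserted form, so the two conditions are equivalent.

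It then remains to solve the linear ODE $\mathcal{L}^-\mathcal{L}^+(F)=1$, whose characteristic roots are $\pm 1,\pm 2$ (one from each linear factor of $\mathcal{L}^+$ and $\mathcal{L}^-$) with constant particular solution $F=1$; this gives the five-parameter family (\ref{EqnFforExtremal}) after relabeling constants. Plugging this $F$ back into $s=-\frac{8e^z}{C_0}(\mathcal{L}^+(F)-1)$ and using $\mathcal{L}^+(e^{-2z})=6e^{-2z}$, $\mathcal{L}^+(e^{-z})=3e^{-z}$, $\mathcal{L}^+(e^z)=\mathcal{L}^+(e^{2z})=0$ produces the stated formula $s=-\frac{24}{C_0}(C_1e^{-z}+C_2)$, from which the CSC condition $C_1=0$ is immediate. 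The $J^-$ statement follows by the $z\mapsto -z$ symmetry that interchanges $\mathcal{L}^\pm$ and $\omega^\pm$. The only point requiring care is the Lie-derivative calculation showing $h(z)e_1$ is Killing iff $h$ is constant; everything else is bookkeeping with the explicit operators.
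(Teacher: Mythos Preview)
Your proof is correct and follows essentially the same route as the paper: both use Calabi's criterion that $J\nabla s$ be Killing, identify $J^+\nabla s$ as a multiple of $e_1=2\partial_\psi$, and reduce to $s=\alpha+\beta e^{-z}$. The only differences are cosmetic: the paper solves the resulting second-order ODE $\mathcal{L}^+(F)-1=-\tfrac{C_0}{8}(\alpha e^{-2z}+\beta e^{-z})$ directly, whereas you apply $\mathcal{L}^-$ to obtain the fourth-order equation $\mathcal{L}^-\mathcal{L}^+(F)=1$ stated in the proposition and then solve that; and your explicit Lie-derivative check that $h(z)e_1$ is Killing iff $h'=0$ is slightly more careful than the paper's one-line assertion.
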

	\begin{proof}
		From (\ref{EqnMetsCxsKahls}) and (\ref{EqnFrTransLeft}), we have $\nabla{}z=4\frac{F}{C}\frac{\partial}{\partial{}z}=\frac{4}{C}J\frac{\partial}{\partial\psi}$.
		Because the coordinate field $\frac{\partial}{\partial\psi}$ is itself a Killing field and because $s=s(z)$ is a function of $z$ alone, for $J\nabla{}s$ to be Killing it must be proportional $\frac{\partial}{\partial\psi}$.
		Thus the extremal condition is
		$
		\nabla{}s
		=-4\alpha{}J\frac{\partial}{\partial\psi}
		=-\alpha{}e^{z}\nabla{}z
		=\nabla\left(\alpha{}e^{-z}\right)
		$
		for any constant $\alpha$.
		Therefore $s=\alpha{}e^{-z}+\beta$ where $\beta$ is another constant.
		Using $s=-8C_0^{-1}e^{z}(\mathcal{L}^+(F)-1)$ from (\ref{EqnScalCurvComp}) we obtain
		\begin{equation}
			-8C_0^{-1}e^{z}\left(\frac{1}{2}\frac{\partial^2F}{\partial{}z^2}
			-\frac32\frac{\partial{}F}{\partial{}z}
			+F-1\right)\;=\;\alpha{}e^{-z}+\beta.
		\end{equation}
		The general solution is (\ref{EqnFforExtremal}), after setting $C_1=-\frac{1}{24}\alpha{}C_0$ and $C_2=-\frac{1}{24}\beta{}C_0$.
		
		For $J^-$ in place of $J^+$, reverse the sign on $z$ in all computations.
	\end{proof}

	\subsection{Ricci curvature and Einstein metrics}
	
	By (\ref{EqnTfRic}), $\cRic=0$ and the metric is Einstein if and only if
	\begin{equation}
		\begin{aligned}
			&\frac{\partial^2}{\partial{}z^2}C^{-\frac12}=\frac14C^{-\frac12}
			\;\;\text{and}\;\;
			C^{\frac12}\frac{\partial}{\partial{}z}\left(F\frac{\partial}{\partial{}z}C^{-\frac12}\right)=\left(\frac12\frac{\partial^2F}{\partial{}z^2}-\frac34F+1\right).
		\end{aligned} \label{EqnsRicFlatSimpler}
	\end{equation}
	This is critically determined and partly decoupled.
	It is $4^{th}$ order in total so we will have a 4-parameter solution space.
	The general solution is
	\begin{equation}
		\begin{aligned}
			&F\;=\;1+\frac12C_1e^{-2z}+C_2e^{-z}+C_3e^z+\frac12C_4e^{2z}, \;\;
			C\;=\;\frac{e^{-z}}{\left(C_5+C_6e^{-z}\right)^2}, \\
			&\text{where}\quad
			C_1C_5-C_2C_6=0,
			\quad \text{and} \quad
			C_3C_5-C_4C_6=0.
		\end{aligned} \label{EqnEinsteinSol}
	\end{equation}
	With six constants and two algebraic relations we have the expected four-parameter family of solutions.
	(We remark that the Einstein metrics of Proposition \ref{PropIntegrability} have this form.)
	The two algebraic relations can be expressed in vector form: the vectors $(C_1,C_2)$, $(C_3,C_4)$, and $(C_5,C_6)$ are proportional to each other.
	Because $(C_1,C_2)$ and $(C_3,C_4)$ are proportional, we have $C_1C_4-C_2C_3=0$ so we recover the fact that Einstein metrics are Bach-flat; see (\ref{EqnThreeParamBachFlat}) below.
	By Lemma \ref{LemmaClosedJ} the metric is K\"ahler when $C_6=0$ (for $J^+$) or $C_5=0$ (for $J^-$).
	
	To be Ricci-flat, $C$ and $F$ require, in addition to (\ref{EqnsRicFlatSimpler}), that $s=0$.
	This third relation appears to make the overall system overdetermined, but it does not, for the reason that $s$ is a first integral for the system (\ref{EqnsRicFlatSimpler}) so only contributes an algebraic relation.
	From (\ref{EqnScalSecond}),
	\begin{equation}
		s\;=\;-24(C_2C_5^2-2C_5C_6+C_3C_6^2). \label{EqnScalarLate}
	\end{equation}
	\begin{proposition}[The Einstein conditions] \label{PropEinstCondition}
		The metric (\ref{EqnMetrInZ}) is Einstein if and only if
		\begin{equation}
			\begin{aligned}
				&F\;=\;1+\frac12C_1e^{-2z}+C_2e^{-z}+C_3e^z+\frac12C_4e^{2z}, \quad
				C\;=\;\frac{e^{-z}}{\left(C_5+C_6e^{-z}\right)^2}, \\
				&C_1C_5-C_2C_6=0, \quad \text{and} \quad C_3C_5-C_4C_6=0.
			\end{aligned} \label{EqnThmEinstFC}
		\end{equation}
		Its scalar curvature is the constant $s=-24(C_2C_5^2-2C_5C_6+C_3C_6^2)$.
		
		Up to homothety, there is a 2-dimensional family of Einstein metrics.
		Up to homothety, there is a 1-dimensional family of Ricci-flat metrics, a 1-dimensional family of KE metrics with respect to $J^+$, and a 1-dimensional family of KE metrics with respect to $J^-$.
		Up to homothety and biholomorphism, there are exactly five Ricci-flat K\"ahler metrics.
	\end{proposition}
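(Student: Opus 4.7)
The plan is to solve the partially decoupled second-order system (\ref{EqnsRicFlatSimpler}) obtained from setting $\cRic = 0$ in (\ref{EqnTfRic}), and then carry out dimension counts with careful attention to homothety and biholomorphism equivalences. The first equation, $(C^{-1/2})'' = \tfrac14 C^{-1/2}$, involves only $C$; its general solution is $C^{-1/2} = C_5 e^{z/2} + C_6 e^{-z/2}$, which rearranges into the stated form $C = e^{-z}/(C_5 + C_6 e^{-z})^2$.

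Next I would substitute this $C$ into the second equation of (\ref{EqnsRicFlatSimpler}). After simplification the equation becomes a linear inhomogeneous ODE for $F$ whose characteristic exponents are exactly those of $\mathcal{L}^- \circ \mathcal{L}^+$, so its general solution is $F = 1 + \tfrac12 C_1 e^{-2z} + C_2 e^{-z} + C_3 e^{z} + \tfrac12 C_4 e^{2z}$, but now the nonlinear coupling with $C$ contributes additional algebraic restrictions. Matching the coefficients of $e^{kz}$ term by term in the equation produces the two relations $C_1 C_5 - C_2 C_6 = 0$ and $C_3 C_5 - C_4 C_6 = 0$. The scalar curvature formula follows by plugging these forms into (\ref{EqnScalSecond}) and collecting constant terms.

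The dimension counts are now bookkeeping. The raw solution has six constants cut by two algebraic relations, giving a four-dimensional solution space. The two-dimensional homothety group (metric rescaling and translation in $z$) reduces this to a 2-parameter family of Einstein metrics up to homothety. The additional algebraic cut $s = 0$ leaves a 1-parameter family of Ricci-flat metrics. For K\"ahler-Einstein with respect to $J^+$, Lemma \ref{LemmaClosedJ} forces $C_6 = 0$, after which the two algebraic relations force $C_1 = C_3 = 0$ (since $C_5 \ne 0$); this yields a 1-parameter family. The $J^-$ case is symmetric, with $C_5 = 0$ and $C_2 = C_4 = 0$.

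The most delicate step is the enumeration of Ricci-flat K\"ahler metrics up to biholomorphism. For $J^+$-K\"ahler combined with Ricci-flat, one has $F = 1 + \tfrac12 C_4 e^{2z}$ and $C = e^{-z}/C_5^2$, since $s = -24 C_2 C_5^2 = 0$ forces $C_2 = 0$; rescaling $g$ and translating $z$ normalize to $C_5 = 1$ and $C_4 \in \{-1, 0, +1\}$, giving the flat metric, the Eguchi-Hanson metric, and (\ref{EqnSuperEguchiHanson}). The $J^-$-K\"ahler case is identified with the $J^+$-case via $z \mapsto -z$. The remaining Ricci-flat K\"ahler metrics come from the right-invariant complex structures $I^\pm$, which are exhausted by Proposition \ref{EqnTwoRFEMetrics}, contributing the Taub-NUT and (\ref{EqnSuperTaubNUT}) in addition to the flat metric. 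The hard part is to show these two lists overlap only in the flat metric, so the final count is $3 + 3 - 1 = 5$; this follows because the K\"ahler conditions $C = C_0 e^{-z}$ (for $J^+$) and $C = C_0 e^{z}/(1 + C_1 e^{z})^2$ (for $I^-$) are algebraically incompatible unless both reduce to the flat case, and the remaining metrics are topologically distinguished (e.g.\ ALE versus ALF ends) so no further biholomorphism collapses the count.
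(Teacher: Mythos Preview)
Your treatment of the first parts---solving the decoupled system (\ref{EqnsRicFlatSimpler}), extracting the two algebraic relations, and the dimension counts for Einstein, Ricci-flat, and KE families---matches the paper's argument essentially verbatim (the paper does this in the paragraphs preceding the proposition).

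The gap is in the enumeration of Ricci-flat K\"ahler metrics. You organize the count by running over the complex structures $J^\pm$ and $I^\pm$ separately, but you never justify the sentence ``the remaining Ricci-flat K\"ahler metrics come from the right-invariant complex structures $I^\pm$.'' The statement to be proved is that there are five Ricci-flat K\"ahler metrics with respect to \emph{any} compatible complex structure, not merely with respect to the four structures explicitly constructed in Section~\ref{SubSecCxStructs}. Nothing in your argument rules out a $U(2)$-invariant Ricci-flat metric that happens to be K\"ahler for some other complex structure (perhaps not even $U(2)$-invariant itself).

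The paper closes this gap with a single stroke: Derdzi\'nski's theorem says a scalar-flat K\"ahler metric---for \emph{any} complex structure whatsoever---is half-conformally flat. This is a purely Riemannian constraint on $g$, so by Proposition~\ref{PropHCFmetrics} it forces either $C_1=C_2=0$ or $C_3=C_4=0$. The paper then classifies all Ricci-flat half-conformally flat metrics in the family (\ref{EqnThmEinstFC}) by elementary casework on $(C_1,C_2,C_5,C_6)$ subject to the two linear relations and the scalar-flat relation, and identifies the complex structure after the fact in each case. Your case-by-case approach over $J^\pm,I^\pm$ recovers the same five metrics, but without Derdzi\'nski's theorem (or some substitute classifying all compatible complex structures) you have not shown the list is exhaustive.
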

	\begin{proof}
		We have proven everything except the final assertion, that exactly five metrics of the form (\ref{EqnMetrInZ}) are Ricci-flat K\"ahler, up to homothety.
		We prove this regardless of the complex structure, whether one of the structures considered here or not.
		A $U(2)$-invariant metric is Einstein if and only if it has the form (\ref{EqnThmEinstFC}).
		By Derdzinski's theorem \cite{Derd}, if a scalar-flat metric is K\"ahler---regardless of the complex structure---then it is half-conformally flat.
		In particular either $C_1=C_2=0$ or $C_3=C_4=0$.
		
		So assume $C_3=C_4=0$; the case $C_1=C_2=0$ is identical under the isomorphism $z\mapsto-z$.
		We have four remaining variables $C_1,C_2,C_5,C_6$ and two relations: $C_1C_5-C_2C_6=0$ from (\ref{EqnEinsteinSol}) and $C_2C_5^2-2C_5C_6=0$ from (\ref{EqnScalarLate}).
		If in addition to $C_3=C_4=0$ we have both $C_1=C_2=0$ then either $C_5=0$ or else $C_6=0$ and in either case we have the flat metric: $F=1$ and $C=C_0e^{\pm{}z}$.
		
		Suppose $C_1=0$ but $C_2\ne0$; then the two relations force $C_5=C_6=0$, an impossibility.
		Suppose $C_1\ne0$ but $C_2=0$; then the relations force $C_6=0$ so
		\begin{equation}
			F\;=\;1+\frac12C_1e^{-2z}, \quad\quad C\;=\;\frac{1}{C_5^{2}}e^{-z}
		\end{equation}
		which is K\"ahler with respect to $J^+$.
		Up to homothety, there are exactly two such metrics: the first of these is given by $F=1-e^{-2z}$, $C=e^{-z}$, which is the Eguchi-Hanson metric (see the table in Appendix \ref{SecClassic}), and the second is given by
		\begin{equation}
			F=1+e^{-2z}, \quad C=e^{-z}
			\label{EqnSuperEguchiHanson}
		\end{equation}
		which is incomplete and has a curvature singularity at $z=-\infty$.
		
		Lastly it is possible that neither $C_1$ nor $C_2$ are zero.
		The two relations now give $\frac{C_6}{C_5}=\frac{C_1}{C_2}$ and $\frac{C_6}{C_5}=\frac{C_2}{2}$, so $C_1=\frac12C_2^2$.
		Therefore the metric is
		\begin{equation}
			F=1+\frac14C_2^2e^{-2z}+C_2e^{-z}=\left(1+\frac12C_2e^{-z}\right)^2, \;\; C=\frac{C_5^2e^{-z}}{\left(1+\frac12C_2e^{-z}\right)^2}. \label{EqnConclTaubNut}
		\end{equation}
		Under the isomorphism $z\mapsto-z$ this is the K\"ahler metric of Proposition \ref{PropIntegrability} which is K\"ahler with respect to the complex structure $I^-$; therefore the metric (\ref{EqnConclTaubNut}) is K\"ahler with respect to the complex structure $I^+$.
		As in Proposition \ref{EqnTwoRFEMetrics}, we obtain exactly two metrics: one where $C_2<0$ (which is isomorphic to the Taub-NUT) and one where $C_2>0$ (which has a curvature singularity).
	\end{proof}

	\subsection{Half-conformally flat, half-harmonic, and Bach-flat metrics}
	
	\begin{proposition}[Half-conformally flat metrics] \label{PropHCFmetrics}
		The metric (\ref{EqnMetrInZ}) is half-conformally flat with $W^\pm=0$ if and only if $\mathcal{L}^\pm(F)-1=0$, meaning
		\begin{equation}
			F\;=\;1+C_3e^{z}+\frac12C_4e^{2z}\quad\text{or}\quad
			F\;=\;1+\frac12C_1e^{-2z}+C_2e^{-z},
		\end{equation}
		respectively.
		Up to homothety, each case constitutes a 1-parameter family of such metrics, each a subspace of the 2-parameter family of Bach-flat metrics.
		
		In the case $g$ is K\"ahler with respect to $J^+$ so $C=C_0e^{-z}$, then $W^+=0$ implies $s=0$, and $W^-=0$ implies $s=-\frac{24}{C_0}(C_1e^{-z}+C_2).$
	\end{proposition}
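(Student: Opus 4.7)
The plan is to read off the half-conformally flat conditions directly from Proposition \ref{PropWeylForm}, solve the two resulting linear second-order ODEs for $F$, and then check the scalar curvature statements using Proposition \ref{PropRicForm}.

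First I would observe that in Proposition \ref{PropWeylForm} the tensor $\omega^\pm\otimes\omega^\pm-\frac23Id_{\bigwedge^\pm}$ is pointwise nonzero (it is a nontrivial trace-adjusted rank-one piece on $\bigwedge^\pm$) and $C>0$, so the identity
\begin{equation*}
W^\pm\;=\;-C^{-1}\big(\mathcal{L}^\pm(F)-1\big)\Big(\omega^\pm\otimes\omega^\pm-\tfrac23Id_{\bigwedge^\pm}\Big)
\end{equation*}
immediately gives the equivalence $W^\pm=0\iff\mathcal{L}^\pm(F)=1$. The operator $\mathcal{L}^+=(-\tfrac12\tfrac{d}{dz}+1)(-\tfrac{d}{dz}+1)$ has characteristic roots at $\lambda=1,2$, so the homogeneous solutions are spanned by $e^z,e^{2z}$, and $F\equiv1$ is a particular solution of $\mathcal{L}^+(F)=1$. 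The general solution is $F=1+C_3e^z+\tfrac12C_4e^{2z}$ (the factor $\tfrac12$ absorbing the eigenvalue $(1-1)(1-2)=0$ normalization used in the paper). The computation for $\mathcal{L}^-$ is symmetric under $z\mapsto -z$, giving $F=1+\tfrac12C_1e^{-2z}+C_2e^{-z}$.

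Next I would address the moduli count. Each solution is a 2-parameter family. Translation in $z$ (one of the two homothety directions) rescales the nonzero exponential coefficient, reducing to a 1-parameter family up to homothety. Comparison with the Bach-flat family $F=1+\tfrac12C_1e^{-2z}+C_2e^{-z}+C_3e^z+\tfrac12C_4e^{2z}$ with $C_1C_4-C_2C_3=0$ from Proposition \ref{IntroPropExtrBachFlat} shows that setting $C_1=C_2=0$ (respectively $C_3=C_4=0$) automatically satisfies the quadratic constraint, exhibiting each half-conformally flat branch as a 1-parameter subspace of the 2-parameter Bach-flat family.

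Finally, for the Kähler scalar curvature statements I would use $s=-\tfrac{8}{C}(\mathcal{L}^+(F)-1)$ from (\ref{EqnScalCurvComp}) with $C=C_0e^{-z}$. When $W^+=0$, we have $\mathcal{L}^+(F)-1\equiv 0$ and hence $s\equiv 0$. When $W^-=0$, one evaluates $\mathcal{L}^+$ on $F=1+\tfrac12C_1e^{-2z}+C_2e^{-z}$ using $\mathcal{L}^+(e^{\lambda z})=(1-\tfrac{\lambda}{2})(1-\lambda)e^{\lambda z}$: with $\lambda=-2$ the factor is $2\cdot 3=6$, and with $\lambda=-1$ the factor is $\tfrac32\cdot 2=3$, giving $\mathcal{L}^+(F)-1=3C_1e^{-2z}+3C_2e^{-z}$ and therefore
\begin{equation*}
s\;=\;-\frac{8}{C_0e^{-z}}\big(3C_1e^{-2z}+3C_2e^{-z}\big)\;=\;-\frac{24}{C_0}\big(C_1e^{-z}+C_2\big),
\end{equation*}
as claimed. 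There is no real obstacle here: the principal work has been done in Propositions \ref{PropWeylForm} and \ref{PropRicForm}, and this proposition is essentially a direct corollary combining a linear ODE solution with the explicit Bach-flat description.
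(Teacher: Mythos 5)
Your proposal is correct and follows exactly the route the paper intends: the paper states this proposition without proof because it is an immediate corollary of Proposition \ref{PropWeylForm} (for the equivalence $W^\pm=0\iff\mathcal{L}^\pm(F)=1$), the explicit Bach-flat family of Lemma \ref{LemmaBSols}, and the scalar curvature formula (\ref{EqnScalCurvComp}); your ODE solutions, the verification that $C_1C_4-C_2C_3=0$ holds automatically on each branch, and the evaluation $\mathcal{L}^+(F)-1=3C_1e^{-2z}+3C_2e^{-z}$ all check out. The only quibble is the parenthetical about the factor $\tfrac12$ "absorbing the eigenvalue $(1-1)(1-2)=0$," which is garbled but harmless --- the $\tfrac12$ is just the paper's normalization convention for the coefficient of $e^{\pm 2z}$.
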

	
	The half-harmonic condition $\delta{}W^+=0$ (or $\delta{}W^-=0$) is underdetermined, and requires an additional condition to be critically determined.
	Three possibilities suggest themselves: $s=const$, the K\"ahler condition, and both $\delta{}W^\pm=0$.
	
	\begin{proposition}[Half-Harmonic metrics]
		The metric (\ref{EqnMetrInZ}) has $\delta{}W^+=0$ if and only if a constant $k_1$ exists so $e^{\frac32z}\left(\mathcal{L}^+(F)-1\right)C=k_1$, and $\delta{}W^-=0$ if and only if $e^{-\frac32z}\left(\mathcal{L}^-(F)-1\right)C=k_2$, some $k_2\in\mathbb{R}$.
		
		Assume (\ref{EqnMetsCxsKahls}) is K\"ahler with respect to $J^+$, meaning $C=C_0e^{-z}$.
		Then
		\begin{itemize}
			\item[{\it{a}})] $\delta{}W^+=0$ if and only if $F=1+C_2e^{-z}+C_3e^z+\frac12C_4e^{2z}$.
			In particular scalar curvature $s=-24\frac{C_2}{C_0}$ is constant.
			\item[{\it{b}})] $\delta{}W^-=0$ if and only if $F=1+\frac12C_1e^{-2z}+C_2e^{-z}+\frac12C_4e^{2z}$.
			In particular the metric is extremal and $s=-24\frac{1}{C_0}(C_1e^{-z}+C_2)$.
		\end{itemize}
	\end{proposition}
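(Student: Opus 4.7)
The plan is to derive both statements from Proposition \ref{PropDelWpm}, which expresses $\delta W^{\pm}$ as $W^{\pm}$ contracted (in its first slot) with $\nabla \log |e^{\pm 3z/2}(\mathcal{L}^\pm(F)-1)\sqrt{C}|$. Since $F$ and $C$ depend only on $z$, this logarithmic gradient lies along $\partial/\partial z$, i.e.\ along the $\sigma^0$ direction. In the nontrivial case $W^\pm\not\equiv 0$ (equivalently, $\mathcal{L}^\pm(F)\not\equiv 1$ by Proposition \ref{PropWeylForm}), the explicit Weyl formula of Proposition \ref{PropWeylForm} shows that the contraction of $W^\pm$ with $\sigma^0$ produces a nonzero $(0,3)$-tensor unless the scalar coefficient vanishes, so the logarithmic gradient must be zero on the open set where $W^\pm\neq 0$. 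Thus the quantity $e^{\pm 3z/2}(\mathcal{L}^\pm(F)-1)\sqrt{C}$ is constant; when $W^\pm\equiv 0$, the same quantity is identically zero, hence also constant.

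For the K\"ahler cases (a) and (b), I substitute $C = C_0 e^{-z}$ into the constancy condition. Case (a) reduces to the inhomogeneous linear ODE $\mathcal{L}^+(F)-1 = \alpha e^{-z}$ for some constant $\alpha$; case (b) to $\mathcal{L}^-(F)-1 = \beta e^{2z}$. Each is solved by adding a particular solution to the relevant kernel: $\ker\mathcal{L}^+ = \mathrm{span}\{e^z, e^{2z}\}$ and $\ker\mathcal{L}^- = \mathrm{span}\{e^{-z}, e^{-2z}\}$. Direct substitution yields $\mathcal{L}^+(e^{-z}) = 3e^{-z}$ and $\mathcal{L}^-(e^{2z}) = 6e^{2z}$, giving particular solutions $(\alpha/3)e^{-z}$ and $(\beta/6)e^{2z}$ respectively. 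Together with $\mathcal{L}^\pm(1)=1$, this produces the claimed expressions for $F$, with $C_2 = \alpha/3$ in (a) and $C_4 = \beta/3$ in (b).

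The scalar-curvature assertions then follow from Proposition \ref{PropRicForm}, which gives $s = -(8/C)(\mathcal{L}^+(F)-1)$. In (a), $F = 1 + C_2 e^{-z} + C_3 e^z + \tfrac12 C_4 e^{2z}$ has $\mathcal{L}^+(F)-1 = 3C_2 e^{-z}$ (using $\mathcal{L}^+(e^z) = \mathcal{L}^+(e^{2z})=0$), so $s = -24 C_2/C_0$. In (b), using $\mathcal{L}^+(e^{-2z}) = 6 e^{-2z}$, $\mathcal{L}^+(e^{-z}) = 3 e^{-z}$, and $\mathcal{L}^+(e^{2z}) = 0$ one gets $\mathcal{L}^+(F)-1 = 3C_1 e^{-2z} + 3C_2 e^{-z}$, hence $s = -(24/C_0)(C_1 e^{-z} + C_2)$.

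The one substantive point, rather than a true obstacle, is the claim that (b) forces the metric to be extremal. The cleanest proof exploits the key cancellation $\mathcal{L}^+(e^{2z}) = 0$: since $\mathcal{L}^-(F)-1 = \beta e^{2z}$ lies in $\ker\mathcal{L}^+$, we obtain $\mathcal{L}^+\circ\mathcal{L}^-(F) = \mathcal{L}^+(1) = 1$, which is exactly the extremal equation of Proposition \ref{PropExtremal}. This asymmetry between $\mathcal{L}^+$ and $\mathcal{L}^-$ (the $J^+$-K\"ahler gauge breaks the symmetry $z\mapsto -z$) is precisely what makes $\delta W^- = 0$ strictly stronger than $\delta W^+ = 0$ in the present setting, and it is the one place in the argument where one must invoke structure beyond the mere ODE solution.
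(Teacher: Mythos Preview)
Your proof is correct and follows the same route as the paper: both derive the general constancy condition directly from Proposition~\ref{PropDelWpm}, then specialize to $C=C_0e^{-z}$ and solve the resulting linear ODEs for $F$. The only cosmetic difference is that the paper dispatches (\textit{a}) and (\textit{b}) by observing that each $F$ is the extremal $F$ of Proposition~\ref{PropExtremal} with one coefficient set to zero (so the scalar-curvature formula and extremality are read off from there), whereas you compute $s$ directly from Proposition~\ref{PropRicForm} and verify extremality in (\textit{b}) via $\mathcal{L}^+(\beta e^{2z})=0$; these amount to the same computation.
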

	\begin{proof}
		The assertion for $\delta{}W^+=0$ follows from Proposition \ref{PropDelWpm} using $C=C_0e^{-z}$, $e^{\frac32z}(\mathcal{L}^+(F)-1)\sqrt{C}=k_1$ and finding the general solution.
		In the K\"ahler case, ({\it{a}}) and ({\it{b}}) follow from Proposition \ref{PropExtremal}.
	\end{proof}
	
	\vspace{0.1in}
	
	The harmonic-Weyl condition $\delta{}W=0$ is the two conditions $\delta{}W^\pm=0$.
	This is critically determined in the $U(2)$-invariant case, which may be surprising because it is underdetermined on a generic 4-manifold and requires an additional condition, usually on scalar curvature \cite{DerdHarm}, to make it critically determined.
	In any case an Einstein metric has $\delta{}W=0$, while in the $U(2)$-invariant case the converse is also true.
	
	\begin{proposition}[Harmonic curvature]
		The metric (\ref{EqnMetsCxsKahls}) has $\delta{}W=0$ if and only if $g$ is Einstein.
	\end{proposition}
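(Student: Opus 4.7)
The forward direction is a general fact: an Einstein metric has $s$ constant and $\Ric$ covariantly constant, so by the once-contracted second Bianchi identity $\delta W$ reduces to a multiple of $d\Ric$ modulo $d s$, and both vanish. For the converse, the plan is to combine the two half-harmonic conditions furnished by Proposition \ref{PropDelWpm}: assuming $\delta W = 0$, there exist constants $k_1, k_2$ with
\[
(\mathcal{L}^+(F)-1)\sqrt{C} = k_1 e^{-3z/2}, \qquad (\mathcal{L}^-(F)-1)\sqrt{C} = k_2 e^{3z/2}.
\]

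Using $\mathcal{L}^\pm = \tfrac12 \partial_z^2 \mp \tfrac32 \partial_z + 1$, so that $\mathcal{L}^+ - \mathcal{L}^- = -3\partial_z$ and $\mathcal{L}^+ + \mathcal{L}^- = \partial_z^2 + 2$, subtracting and adding these two relations gives the paired system
\[
-3F_z\sqrt{C} = k_1 e^{-3z/2} - k_2 e^{3z/2}, \qquad (F_{zz}+2F-2)\sqrt{C} = k_1 e^{-3z/2} + k_2 e^{3z/2}.
\]
Set $U = C^{-1/2}$. Solving the first for $F_z$, differentiating, and substituting into the second, one obtains the explicit representation
\[
F \;=\; 1 + \tfrac14\bigl(k_1 e^{-3z/2} + k_2 e^{3z/2}\bigr) U + \tfrac16\bigl(k_1 e^{-3z/2} - k_2 e^{3z/2}\bigr) U_z.
\]

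The key compatibility step is to differentiate this expression for $F$ and match with $F_z = -\tfrac13(k_1 e^{-3z/2} - k_2 e^{3z/2})U$. After routine algebra the $(k_1 e^{-3z/2} + k_2 e^{3z/2})U_z$ cross-terms cancel exactly, and the remaining identity, once the common factor $(k_1 e^{-3z/2} - k_2 e^{3z/2})$ is divided out, collapses to
\[
U_{zz} \;=\; \tfrac14 U.
\]
This is precisely the first half of the Einstein condition (\ref{EqnsRicFlatSimpler}), so $U = C_5 e^{z/2} + C_6 e^{-z/2}$, i.e.\ $C = e^{-z}/(C_5+C_6 e^{-z})^2$. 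Substituting this $U$ back into the formula for $F$ and expanding the products of exponentials yields
\[
F \;=\; 1 + \tfrac16 k_1 C_6 e^{-2z} + \tfrac13 k_1 C_5 e^{-z} + \tfrac13 k_2 C_6 e^{z} + \tfrac16 k_2 C_5 e^{2z},
\]
which upon setting $C_1 = \tfrac13 k_1 C_6$, $C_2 = \tfrac13 k_1 C_5$, $C_3 = \tfrac13 k_2 C_6$, $C_4 = \tfrac13 k_2 C_5$ is exactly the Einstein form of Proposition \ref{PropEinstCondition}, with the algebraic relations $C_1 C_5 = C_2 C_6$ and $C_3 C_5 = C_4 C_6$ automatically satisfied. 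An application of Proposition \ref{PropEinstCondition} finishes the proof.

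The main obstacle is the compatibility computation in the middle step: one must verify that the $U_z$-terms cancel and that the $U$-coefficient reduces to $-\tfrac18$, producing the clean ODE $U_{zz} = \tfrac14 U$ rather than an inconsistent relation. A minor additional point is the case where $k_1 e^{-3z/2} - k_2 e^{3z/2}$ vanishes identically, i.e.\ $k_1 = k_2 = 0$: then $W \equiv 0$, $F \equiv 1$, and the argument is degenerate; this case must either be excluded as non-generic or handled by observing that the stated Einstein form with $C_1=\cdots=C_4=0$ subsumes those solutions for which $C$ additionally satisfies $U_{zz}=\tfrac14 U$.
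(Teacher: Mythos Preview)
Your proof is correct and takes a genuinely different route from the paper's. The paper divides the two half-harmonic relations to eliminate $\sqrt{C}$, obtaining a single second-order ODE $k_2 e^{3z/2}(\mathcal{L}^+(F)-1) = k_1 e^{-3z/2}(\mathcal{L}^-(F)-1)$ in $F$ alone, writes down its general solution $F = 1 + k_1(\tfrac12 C_1 e^{-2z} + C_2 e^{-z}) + k_2(C_1 e^z + \tfrac12 C_2 e^{2z})$, and then substitutes back into either relation to recover $C$ in the Einstein form. You instead add and subtract the relations, express $F$ explicitly in terms of $U = C^{-1/2}$ and $U_z$, and extract the constant-coefficient equation $U_{zz} = \tfrac14 U$---precisely the first Einstein condition in (\ref{EqnsRicFlatSimpler})---as a compatibility condition before solving for $F$. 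Your route is slightly longer but more self-contained: the key ODE has constant coefficients and emerges mechanically, whereas the paper's variable-coefficient ODE requires one to recognize or guess its solution. The paper's route is shorter once one knows the answer.

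Your caveat about the degenerate case $k_1 = k_2 = 0$ is well placed and in fact flags a gap that the paper's own proof shares (it silently divides by $\mathcal{L}^\pm(F)-1$ when ``eliminating $C$''). When $k_1 = k_2 = 0$ one has $W \equiv 0$ and $F \equiv 1$, but $C$ is left completely unconstrained by $\delta W = 0$; plugging $F = 1$ into (\ref{EqnTfRic}) shows such a metric is Einstein only if $U_{zz} = \tfrac14 U$. So the proposition should be read modulo the locally conformally flat case, and you are right to single it out.
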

	\begin{proof}
		Because $\delta{}W^+\in{}T^*M\otimes\bigwedge^+$ and $\delta{}W^-\in{}T^*M\otimes\bigwedge^-$, we have $\delta{}W=0$ if and only if $\delta{}W^+$ and $\delta{}W^-$ are both zero.
		Then by Lemma \ref{PropDelWpm} constants $k_1$, $k_2$ exist so
		\begin{equation}
			\begin{aligned}
				e^{\frac32z}(\mathcal{L}^+(F)-1)\sqrt{C}=k_1 \quad\text{and}\quad
				e^{-\frac32z}(\mathcal{L}^-(F)-1)\sqrt{C}=k_2.
			\end{aligned} \label{EqnHarmWeylConds}
		\end{equation}
		Eliminating $C$, we obtain $k_2e^{\frac32z}(\mathcal{L}^+(F)-1)=k_1e^{-\frac32z}(\mathcal{L}^-(F)-1)$ which has general solution
		\begin{equation}
			F\;=\;1
			+k_1\left(\frac12C_1e^{-2z}
			+C_2e^{-z}\right)
			+k_2\left(C_1e^z
			+\frac12C_2e^{2z}\right). \label{EqnFHarmWeyl}
		\end{equation}
		Using either equation in (\ref{EqnHarmWeylConds}), $C=\frac{C_0e^{-z}}{\left(C_2+C_1e^{-z}\right)^2}.$
		From Proposition \ref{PropEinstCondition}, the metric is Einstein.
	\end{proof}
	
	Next we consider the case of Bach-flat metrics.
	From Proposition \ref{ThmBachTensor} this requires $F$ solve the fourth order linear equation $\mathcal{L}^-(\mathcal{L}^+(F))-1=0$ and the third order non-linear equation $\mathcal{B}(F,F)=0$.
	This seems to be overdetermined, but by (\ref{EqnBConstMotLLF}) the two equations are not independent.
	\begin{lemma} \label{LemmaBSols}
		If $F$ solves $\mathcal{L}^+(\mathcal{L}^-(F))-1$ then $\mathcal{B}(F,F)=const$.
		If $F$ solves $\mathcal{B}(F,F)=0$, then $\mathcal{L}^+(\mathcal{L}^-(F))-1=0.$
		$F$ solves both equations if and only if
		\begin{equation}
			F=1+\frac12C_1e^{-2z}+C_2e^{-z}+C_3e^z+\frac12C_4e^{2z}
			\;\;\text{and}\;\;
			C_1C_4-C_2C_3=0. \label{EqnThreeParamBachFlat}
		\end{equation}
	\end{lemma}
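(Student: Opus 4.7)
The plan is to establish the single identity
\begin{equation*}
\frac{d}{dz}\mathcal{B}(F,F) \;=\; 2\,F_z\bigl(\mathcal{L}^+\mathcal{L}^-(F) - 1\bigr),
\end{equation*}
after which all three parts of the lemma follow essentially for free.

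For the identity, I first expand $\mathcal{L}^+(F) = \tfrac12 F_{zz} - \tfrac32 F_z + F$, so the first factor in (\ref{EqnBOperatorDef}) rewrites as $-\tfrac12 F_{zz} + \tfrac32 F_z + F - 1 = 2(F-1) - (\mathcal{L}^+(F)-1)$. Setting $B := \mathcal{L}^+(F)-1$, we obtain $\mathcal{B}(F,F) = 2(F-1)B - B^2 + F_z B_z$. Differentiating once and collecting coefficients of $B_z$, a one-line cancellation gives $2(F-1) - 2B + F_{zz} = 3F_z$, so $\frac{d}{dz}\mathcal{B}(F,F) = F_z(B_{zz} + 3B_z + 2B) = 2F_z\,\mathcal{L}^-(B)$. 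Because $\mathcal{L}^+$ and $\mathcal{L}^-$ commute (constant coefficients) and $\mathcal{L}^-(1)=1$, one has $\mathcal{L}^-(B) = \mathcal{L}^+\mathcal{L}^-(F) - 1$, completing the identity.

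Part (1) is then immediate. For part (2), if $\mathcal{B}(F,F) \equiv 0$ then $F_z\bigl(\mathcal{L}^+\mathcal{L}^-(F)-1\bigr) \equiv 0$; on the open set $\{F_z \ne 0\}$ this directly gives the linear equation, while on any open interval where $F_z \equiv 0$ the function $F$ is constant and $\mathcal{B}(F,F) = (F-1)^2 = 0$ forces $F\equiv 1$, which satisfies $\mathcal{L}^+\mathcal{L}^-(F) = 1$ trivially. Continuity of $\mathcal{L}^+\mathcal{L}^-(F) - 1$ then propagates the equation to all $z$.

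For part (3), the general solution of $\mathcal{L}^+\mathcal{L}^-(F) = 1$ is given by (\ref{EqnFExtrPre}) since the characteristic polynomial $\tfrac14(\lambda^2-1)(\lambda^2-4)$ has simple roots $\pm 1, \pm 2$. By part (1), $\mathcal{B}(F,F)$ is then constant, so I would evaluate at the convenient point $z = 0$: substituting (\ref{EqnFExtrPre}) yields $A = -2C_1 - C_2 + 2C_3 + C_4$, $B = 3(C_1+C_2)$, $B_z = -3(2C_1+C_2)$, $F_z = -C_1 - C_2 + C_3 + C_4$, and straightforward bookkeeping gives $\mathcal{B}(F,F) = AB + F_zB_z = -3(C_1C_4 - C_2C_3)$. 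The vanishing of this constant is precisely $C_1C_4 = C_2C_3$. The only step requiring any care is the interval analysis in part (2), and even that is minor; the whole lemma reduces to the clean one-line identity and a constant-coefficient substitution.
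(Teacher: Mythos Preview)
Your proof is correct and follows essentially the same approach as the paper: both hinge on the identity $\partial_z\mathcal{B}(F,F)=2F_z(\mathcal{L}^+\mathcal{L}^-(F)-1)$ (the paper's (\ref{EqnBConstMotLLF})), then handle the constant-$F$ case separately, and finally evaluate $\mathcal{B}$ on the general solution to get $3(C_2C_3-C_1C_4)$. Your derivation of the identity via the substitution $B=\mathcal{L}^+(F)-1$ and the rewriting $\mathcal{B}=2(F-1)B-B^2+F_zB_z$ is in fact cleaner than the paper's ``tedious but straightforward computation,'' and your evaluation at $z=0$ is a tidy shortcut for the final constant.
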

	\begin{proof}
		A tedious but straightforward computation shows
		\begin{equation}
			\frac{\partial}{\partial{}z}\mathcal{B}(F,F)
			\;=\;2\frac{\partial{}F}{\partial{}z}
			\cdot\left(\mathcal{L}^+(\mathcal{L}^-(F))-1\right). \label{EqnBConstMotLLF}
		\end{equation}
		Therefore $\mathcal{B}(F,F)$ is indeed constant on solutions of $\mathcal{L}^+(\mathcal{L}^-(F))-1=0$, as claimed.
		Further, $\mathcal{B}(F,F)=0$ implies that either $F=const$ or $\mathcal{L}^+(\mathcal{L}^-(F))=1$.
		Direct computation shows the only constant that satisfies $\mathcal{B}(F,F)=0$ is $F=1$, which indeed solves $\mathcal{L}^+(\mathcal{L}^-(F))-1=0$.
		We conclude that $\mathcal{B}(F,F)=0$ implies $\mathcal{L}^+(\mathcal{L}^-(F))-1=0$, as claimed.
		
		The general solution of $\mathcal{L}^+(\mathcal{L}^-(F))=1$ is $F=1+\frac12C_1e^{-2z}+C_2e^{-z}+C_3e^z+\frac12C_4e^{2z}$, and in this case direct computation shows that $\mathcal{B}(F,F)=3(C_2C_3-C_1C_4)$.
		Therefore the general solution of $\mathcal{L}^+(\mathcal{L}^-(F))=1$, $\mathcal{B}(F,F)=0$ is the three parameter family of (\ref{EqnThreeParamBachFlat}).
	\end{proof}
	
	\begin{proposition}
		The metric (\ref{EqnMetsCxsKahls}) is Bach-flat if and only if
		\begin{equation}
			F=1+\frac12C_1e^{-2z}+C_2e^{-z}+C_3e^z+\frac12C_4e^{2z}
			\quad\text{and}\quad
			C_1C_4-C_2C_3=0.
		\end{equation}
		In particular $g$ is Bach-flat if and only if it is conformally Einstein.
		Up to choice of conformal factor and translation in $z$, there is precisely a 2-parameter family of Bach-flat metrics.
	\end{proposition}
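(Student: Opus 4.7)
The plan is to read off the Bach-flat condition directly from the explicit formula for $Bach$ in Theorem \ref{ThmBachTensor}, use Lemma \ref{LemmaBSols} to identify the resulting $F$, then compare with Proposition \ref{PropEinstCondition} for the conformally-Einstein claim.

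First I would observe that the two tensor factors appearing in (\ref{EqnBachDetermination}),
$-2(\sigma^1)^2 + (\sigma^2)^2+(\sigma^3)^2$ and $-(\sigma^0)^2-(\sigma^1)^2+(\sigma^2)^2+(\sigma^3)^2$, are linearly independent (their entries in the $(\sigma^0)^2$ and $(\sigma^1)^2$ slots clearly span a two-dimensional subspace of diagonal $U(2)$-invariant trace-free tensors). Hence $Bach=0$ forces both coefficients to vanish: $F\cdot(\mathcal{L}^-\mathcal{L}^+(F)-1)=0$ and $\mathcal{B}(F,F)=0$. Since the metric is non-degenerate, $F$ is not identically zero, so we need $\mathcal{L}^-\mathcal{L}^+(F)-1=0$ as well as $\mathcal{B}(F,F)=0$. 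Lemma \ref{LemmaBSols} then delivers precisely the three-parameter family
\begin{equation*}
F=1+\tfrac12C_1e^{-2z}+C_2e^{-z}+C_3e^z+\tfrac12C_4e^{2z},\qquad C_1C_4-C_2C_3=0,
\end{equation*}
which establishes the forward direction. The converse is immediate: if $F$ has this form and satisfies the algebraic relation, then both factors in (\ref{EqnBachDetermination}) vanish, so $Bach=0$ regardless of $C$. Note also that $C$ does not enter the Bach-flat equations at all, a reflection of the conformal invariance of $Bach$.

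For the conformally-Einstein equivalence, the easy direction is that every Einstein metric is Bach-flat, hence so is any metric conformal to one. For the other direction, given a Bach-flat $F$, I would use the freedom in $C$: by Proposition \ref{PropEinstCondition}, a metric (\ref{EqnMetrInZ}) is Einstein precisely when $F$ has the above form, $C=e^{-z}/(C_5+C_6e^{-z})^2$, and the two relations $C_1C_5=C_2C_6$, $C_3C_5=C_4C_6$ hold. These two relations say that $(C_5,C_6)$ is simultaneously proportional to $(C_2,C_1)$ and to $(C_4,C_3)$, which is possible exactly when $(C_2,C_1)\parallel(C_4,C_3)$, i.e.\ $C_1C_4-C_2C_3=0$. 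Thus the Bach-flat condition $C_1C_4=C_2C_3$ is exactly what is required to choose $(C_5,C_6)$ producing an Einstein representative in the conformal class.

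Finally, for the parameter count: the space of Bach-flat $F$'s consists of the four constants $C_1,\dots,C_4$ subject to one algebraic relation, giving a $3$-parameter family; the translation $z\mapsto z+z_0$ scales the $C_i$ by $e^{\pm z_0}$ or $e^{\pm 2z_0}$ and removes one further parameter, leaving the asserted 2-parameter family. I do not foresee a serious obstacle here: the main content has already been absorbed into Theorem \ref{ThmBachTensor} and Lemma \ref{LemmaBSols}, and the remaining work is the algebraic matching with the Einstein conditions of Proposition \ref{PropEinstCondition}, which is essentially a rank computation on the pair of vectors $(C_1,C_2)$, $(C_3,C_4)$.
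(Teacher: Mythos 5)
Your proposal is correct and follows essentially the same route as the paper: read $Bach=0$ off the explicit formula of Theorem \ref{ThmBachTensor}, reduce to $\mathcal{L}^-(\mathcal{L}^+(F))-1=0$ and $\mathcal{B}(F,F)=0$ via Lemma \ref{LemmaBSols}, and match against Proposition \ref{PropEinstCondition} for the conformally-Einstein claim. Your explicit rank argument showing that $C_1C_4-C_2C_3=0$ is exactly the condition for a common nonzero $(C_5,C_6)$ to exist is a detail the paper leaves implicit, but it is the same argument.
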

	\begin{proof}
		The metric $g$ is Bach-flat if and only if $\mathcal{L}^+(\mathcal{L}^-(F))-1=0$ and $\mathcal{B}(F,F)=0$.
		From Lemma \ref{LemmaBSols}, this holds if and only if $F=1+\frac12C_1e^{-2z}+C_2e^{-z}+C_3e^z+\frac12C_4e^{2z}$ and $C_1C_4-C_2C_3=0$, giving a 3-parameter family of solutions.
		Factoring out by translation in $z$, this is a 2-parameter family, as claimed.
		To see that any Bach-flat metric is conformal to an Einstein metric, simply let $C$ be a conformal factor from Proposition \ref{PropEinstCondition}.
	\end{proof}

	\subsection{$B^t$-flat metrics}
	
	Lastly we discuss the $B^t$-flat metrics of \cite{GV16}.
	These are extremizers of the quadratic functional $B^t(g)=\int\vert{}W\vert^2+t\int{}s^2$, and when $t=\infty$ we take $B^\infty=\int{}s^2$.
	The Euler-Lagrange equations of this functional \cite{GV16} are
	\begin{equation*}
		-4Bach\,+\,t\,\mathcal{C}\;=\;0
	\end{equation*}
	where $\mathcal{C}=2\left(\nabla^2s-(\triangle{}s)g-s\cRic\right)$ and for the Bach tensor we use the classic expression $Bach_{ij}=W_{sijt,st}+\frac12W_{sijt}\Ric_{st}$.
	Using $Tr(-4Bach+t\mathcal{C})=0$, when $t\ne0$ a $B^t$-flat metric automatically has $\triangle{}s=0$.
	This means the $B^t$-flat equations are the two equations $2Bach+t(s\cRic-\nabla^2s)=0$ and $\triangle{}s=0.$
	We re-express these as an ODE system.
	
	\begin{lemma}[The unreduced $B^t$-flat equations] \label{LemmaUnReduced}
		In the metric (\ref{EqnMetsCxsKahls}) the $B^t$-flat equations $\triangle{}s=0$, $2Bach+t(s\cRic-\nabla^2s)=0$ are equivalent to
		\begin{equation}
			\begin{aligned}
				\frac{\partial}{\partial{}z}\left(
				CF\frac{\partial{}s}{\partial{}z}\right)=0, \quad
				\mathcal{F}_1(F,C)\;=\;0, \quad
				\mathcal{F}_2(F,C)\;=\;0, \quad
				\mathcal{T}(F,C)\;=\;0
			\end{aligned} \label{EqnFourBtFlatEqns}
		\end{equation}
		where $\mathcal{F}_1$, $\mathcal{F}_2$ and $\mathcal{T}$ are the operators
		\begin{equation}
			\small
			\begin{aligned}
				&\mathcal{F}_1(F,C)
				=24\frac{\partial}{\partial{}z}\left(F\frac{\partial}{\partial{}z}C^\frac12\right)
				+4C^{\frac12}\left(\frac{\partial^2F}{\partial{}z^2}+\frac12F-2\right)+s\,C^{\frac32}\\
				&\mathcal{F}_2(F,C)
				=\frac83\left(\mathcal{L}^+(\mathcal{L}^-(F))-1\right)
				+t\,s\,C^{\frac32}\left(\frac{\partial^2}{\partial{}z^2}C^{-\frac12}-\frac14C^{-\frac12}\right)\\
				&\hspace{0.6in}
				+\frac{t}{2}\frac{C}{F}\frac{\partial{}F}{\partial{}z}\frac{\partial{}s}{\partial{}z}
				+t\frac{\partial{}C}{\partial{}z}\frac{\partial{}s}{\partial{}z} \\
				&\mathcal{T}(F,C)
				\;=\;16\,\mathcal{B}(F,F)
				-18tF\frac{\partial{}C}{\partial{}z}\frac{\partial{}s}{\partial{}z}
				-6tC\frac{\partial{}F}{\partial{}z}\frac{\partial{}s}{\partial{}z} \\
				&\quad
				-\frac34tsC^{-1}\left(
				C^2(-16+4F+Cs)
				+12F\left(\frac{\partial{}C}{\partial{}z}\right)^2
				+8C\frac{\partial{}C}{\partial{}z}\frac{\partial{}F}{\partial{}z}
				\right)
			\end{aligned}
		\end{equation}
		and $\mathcal{B}$ is the operator from (\ref{EqnBOperatorDef}).
	\end{lemma}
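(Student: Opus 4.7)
The plan is to recognize that the two Euler--Lagrange equations $\triangle s = 0$ and $2\,Bach + t(s\,\cRic - \nabla^2 s) = 0$ split naturally into four scalar conditions in the $U(2)$-invariant setting: one from the scalar Laplacian, one recording that the auxiliary variable $s$ truly equals the scalar curvature, and two from decomposing the trace-free $(0,2)$-tensor equation onto the two-dimensional space of $U(2)$-invariant trace-free symmetric $2$-tensors (the basis used already in the proof of Theorem \ref{ThmBachTensor}).

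First, since $s = s(z)$, the volume form is $dVol = (C^2/2)\,dz \wedge \eta^1 \wedge \eta^2 \wedge \eta^3$, and $g^{zz}=4F/C$, the formula $\triangle s = (1/\sqrt{g})\partial_i(\sqrt{g}\,g^{ij}\partial_j s)$ collapses to $\triangle s = (4/C^2)\,\partial_z(CF\,\partial_z s)$, so $\triangle s=0$ is precisely $\partial_z(CF\,\partial_z s)=0$. Second, $\mathcal{F}_1(F,C)=0$ is no more than the identity that the scalar-curvature formula of Proposition~\ref{PropRicci} equals $s$: multiplying (\ref{EqnScalGeneral}) through by $-C^{3/2}$ produces $\mathcal{F}_1$ on the nose. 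So these two equations are essentially bookkeeping.

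For the tensor equation, I would expand $2\,Bach + t(s\,\cRic - \nabla^2 s)=0$ in the basis $I_1 = (\sigma^0)^2 - (\sigma^1)^2$ and $I_2 = (\sigma^0)^2 + (\sigma^1)^2 - (\sigma^2)^2 - (\sigma^3)^2$. Theorem~\ref{ThmBachTensor} already gives $Bach$ as $\tfrac{16}{3C^2}F(\mathcal{L}^+\mathcal{L}^-(F)-1)(I_1-I_2) + \tfrac{8}{3C^2}\mathcal{B}(F,F)(-I_2)$, and Proposition~\ref{PropRicci} writes $\cRic = R_1 I_1 + R_2 I_2$ with $R_1,R_2$ explicit. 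The missing input is the trace-free Hessian $\nabla^2 s$ for $s=s(z)$, which (after $\triangle s=0$) is automatically trace-free and so decomposes as $N_1 I_1 + N_2 I_2$. I expect that rescaling the $I_1$-component by $C^2/(4F)$ produces exactly $\mathcal{F}_2=0$: the Bach term becomes $\tfrac{8}{3}(\mathcal{L}^+\mathcal{L}^-(F)-1)$, the $ts\,\cRic$ term becomes $ts\,C^{3/2}(\partial_z^2 C^{-1/2} - \tfrac14 C^{-1/2})$, and $-t\nabla^2 s$ contributes the $F's'$ and $C's'$ terms. Similarly, a suitable rescaling (of order $-3C^2$, absorbing a multiple of $\mathcal{F}_1$ into the coefficient to clean up the scalar-curvature terms) of the $I_2$-component should produce $\mathcal{T}=0$, with the nonlinearity $16\,\mathcal{B}(F,F)$ coming directly from the $I_2$-coefficient of $Bach$.

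The main obstacle is the Hessian computation: producing $N_1,N_2$ explicitly and identifying them with the stated mixed terms $F'C's'$, $F(C')^2$, $CC'F'$, etc. in $\mathcal{T}$. The cleanest route, I think, is to compute $\nabla^2 s$ first in the K\"ahler conformal gauge $C=C_0 e^{-z}$ (where, as in the proof of Proposition~\ref{PropRicci}, $\triangle_g U = 4e^{2z}\partial_z(e^{-z}F\,\partial_z U)$ gives a very tidy Hessian via $\nabla^2 u = \tfrac12 \mathcal{L}_{\nabla u} g$) and then transform to arbitrary $C$ by the conformal-change formula for the Hessian, in parallel with how $\cRic$ was handled after (\ref{EqnConfAddTfRic}). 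Once $N_1,N_2$ are in hand, matching coefficients against the stated $\mathcal{F}_2$ and $\mathcal{T}$ is routine algebra; the appearance of $\mathcal{B}(F,F)$ with the correct coefficient is the cleanest check that the right linear combination has been taken.
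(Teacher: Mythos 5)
Your proposal is correct and follows essentially the same route as the paper: the $\triangle s$ computation and the identification of $\mathcal{F}_1=0$ with the scalar curvature formula (\ref{EqnScalGeneral}) are identical, and your decomposition of the trace-free tensor equation onto $I_1,I_2$ (with the rescalings $C^2/(4F)$ and $-3C^2$, using $\mathcal{F}_1=0$ to absorb the scalar-curvature terms in $\mathcal{T}$) reproduces the paper's two component equations $2(Bach_{00}+Bach_{22})+t(\cdots)=0$ and $2Bach_{00}+t(\cdots)=0$ up to an equivalent choice of linear combinations. The only cosmetic difference is that the paper simply states the Hessian (\ref{EqnHessR}) outright rather than deriving it by conformal change from the K\"ahler gauge, but your route to it is equally valid.
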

	\begin{proof}
		In coordinates, $\triangle=\frac{1}{\sqrt{det\,g}}\frac{\partial}{\partial{}x^i}\left(\sqrt{det\,g}\,g^{ij}\frac{\partial}{\partial{}x^j}\right)$.
		Using $(Z,\tau,x,y)$-coordinates of (\ref{EqnLeBrunForm}) we have $det\,g=\frac{1}{16\cosh^2(x)}C^2$ and $g^{11}=4FC$.
		Because $s=s(Z)$ is a function of $Z$ alone, then $0=\triangle{}s$ is
		\begin{equation}
			\begin{aligned}
				0&=\frac{4\cosh^2(x)}{C}\frac{\partial}{\partial{}Z}\left(\frac{C}{4\cosh^2(x)}4FC\frac{\partial{}s}{\partial{}Z}\right)
				\;=\;\frac{4}{C}\frac{\partial}{\partial{}Z}\left(FC^2\frac{\partial{}s}{\partial{}Z}\right)
			\end{aligned}
		\end{equation}
		The coordinate transition from $z$ to $Z$ of (\ref{EqnLeBrunForm}) is $C\frac{\partial}{\partial{}Z}=\frac{\partial}{\partial{}z}$, so we obtain the first equation of (\ref{EqnFourBtFlatEqns}).
		The second equation $\mathcal{F}_1(F,C)=0$ is precisely the scalar curvature equation (\ref{EqnScalSecond}).
		With $\triangle{}s=0$ the Hessian $\nabla^2s$ is trace-free.
		One computes that
		\begin{equation}
			\begin{aligned}
				\nabla^2s
				&\;=\;
				-2C^{-4}\frac{\partial{}s}{\partial{}z}\frac{\partial(FC^3)}{\partial{z}}\,(\sigma^0)^2
				+2C^{-2}\frac{\partial{}s}{\partial{}z}\frac{\partial(FC)}{\partial{z}}\,(\sigma^1)^2
				+2FC^{-2}\frac{\partial{}s}{\partial{}z}\frac{\partial{}C}{\partial{z}}\,\Big((\sigma^2)^2+(\sigma^3)^2\Big).
			\end{aligned} \label{EqnHessR}
		\end{equation}
		
		For the third and fourth equations we use (\ref{EqnTfRic}), (\ref{EqnBachDetermination}), and (\ref{EqnHessR}).
		We expect precisely two additional relations, due to the fact that each of the tensors $Bach$, $\cRic$, and $\nabla^2s$ have four non-zero components, but also the two algebraic relations of being trace-free, and having identical $(3,3)$ and $(4,4)$ entries.
		We take one relation from $2(Bach_{00}+Bach_{22})+t(s\cRic{}_{00}+s\cRic{}_{22}-s_{,00}-s_{,22})=0$.
		Using (\ref{EqnTfRicPre}), (\ref{EqnBachPre}), and (\ref{EqnHessR}), this is
		\begin{equation}
			\small
			\begin{aligned}
				\frac83\left(\mathcal{L}^-(\mathcal{L}^+(F))-1\right)
				+tsC^{\frac32}\left(\frac{\partial^2C^{-\frac{1}{2}}}{\partial{}z^2}-\frac{1}{4}C^{-\frac{1}{2}}\right)
				+\frac{t}{2}\frac{C}{F}\frac{\partial{}F}{\partial{}z}\frac{\partial{}s}{\partial{}z}
				+t\frac{\partial{}C}{\partial{}z}\frac{\partial{}s}{\partial{}z}
				=0
			\end{aligned}
		\end{equation}
		which is $\mathcal{F}_2(C,F)=0$.
		And we take another relation from $2Bach_{00}+t(s\cRic{}_{00}-s_{,00})=0$, which is
		\begin{equation}
			\small
			\begin{aligned}
				&0\;=\;16\mathcal{B}(F,F)
				-18tF\frac{\partial{}C}{\partial{}z}\frac{\partial{}s}{\partial{}z}
				-6tC\frac{\partial{}F}{\partial{}z}\frac{\partial{}s}{\partial{}z} \\
				&\hspace{0.0125in}
				-\frac34tsC^{-1}\left(
				C^2(-16+4F+sC)
				+12F\left(\frac{\partial{}C}{\partial{}z}\right)^2
				+8C\frac{\partial{}C}{\partial{}z}\frac{\partial{}F}{\partial{}z}
				\right) \\
				&\hspace{0.0125in}
				+\frac34tsC^{\frac12}
				\left(
				4C^{\frac12}\left(\frac{\partial^2F}{\partial{}z^2}+\frac12F-2\right)
				+24\frac{\partial}{\partial{}z}\left(
				F\frac{\partial{}C^{\frac12}}{\partial{}z}
				\right)
				+sC^{\frac32}
				\right).
			\end{aligned}
		\end{equation}
		Using (\ref{EqnScalSecond}) to eliminate the last term, this is $\mathcal{F}_1(F,C)=0$.
	\end{proof}
	
	The equations (\ref{EqnFourBtFlatEqns}) give four equations for the three unknowns $s$, $F$, $C$, so the system appears to be overdetermined.
	However the next lemma shows the equations of (\ref{EqnFourBtFlatEqns}) are not independent.
	\begin{lemma} \label{LemmaReductionOfBtFlat}
		We have the following relation:
		\begin{equation}
			\small
			\begin{aligned}
				\frac{\partial\mathcal{T}}{\partial{}z}
				=
				\frac{-3t}{2\sqrt{C}}\frac{\partial(sC)}{\partial{}z}\mathcal{F}_1
				+12\frac{\partial{}F}{\partial{}z}\mathcal{F}_2
				-6t\frac{\partial\log(C^3F)}{\partial{}z}\frac{\partial}{\partial{}z}\left(CF\frac{\partial{}s}{\partial{}z}\right).
			\end{aligned}
			\label{EqnDerivBtFlat}
		\end{equation}
		Thus $\mathcal{T}(F,C)$ is a constant of the motion for the $8^{th}$ order system $\mathcal{F}_1(F,C)=0$, $\mathcal{F}_2(F,C)=0$, $\triangle{}s=0$.
	\end{lemma}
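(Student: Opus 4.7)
The identity (\ref{EqnDerivBtFlat}) is the cohomogeneity-one manifestation of the second Bianchi identity for the $L^2$-gradient of the $B^t$-functional. Because $B^t$ is diffeomorphism-invariant, its Euler-Lagrange tensor $-4\,Bach+t\,\mathcal{C}$ is divergence-free on any metric; in the $U(2)$-invariant setting that divergence constraint collapses to a single ODE among the reduced components $\mathcal{F}_1$, $\mathcal{F}_2$, $\mathcal{T}$, and this ODE must be precisely (\ref{EqnDerivBtFlat}). Conceptually, then, one would derive it by replaying the diffeomorphism-gauge variation (\ref{EqnVarZ}) from Theorem \ref{ThmBachTensor}, only this time applied to the full $B^t$-functional rather than $\int|W^+|^2$ alone. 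This viewpoint guarantees an identity of the stated form exists; pinning down its coefficients is then a direct verification.

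For that verification I would decompose each operator by its $t$-dependence: write $\mathcal{T}=\mathcal{T}_0+t\mathcal{T}_1$ with $\mathcal{T}_0=16\,\mathcal{B}(F,F)$, and $\mathcal{F}_2=\mathcal{F}_{2,0}+t\,\mathcal{F}_{2,1}$ with $\mathcal{F}_{2,0}=\tfrac{8}{3}(\mathcal{L}^+(\mathcal{L}^-(F))-1)$, while $\mathcal{F}_1$ has no $t$-dependence. The $t^0$ component of the claim is exactly $\partial_z\mathcal{T}_0=12\,F_z\,\mathcal{F}_{2,0}$, which is equation (\ref{EqnBConstMotLLF}) after multiplying by $16$ and using $16\cdot 2 = 12\cdot\tfrac{8}{3}$. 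So the only new content lies in matching the $t^1$ component:
\begin{equation*}
\partial_z\mathcal{T}_1 \;=\; -\tfrac{3}{2\sqrt{C}}(sC)_z\,\mathcal{F}_1 \;+\; 12\,F_z\,\mathcal{F}_{2,1} \;-\; 6\,\partial_z\log(C^3F)\cdot\partial_z(CFs_z).
\end{equation*}

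To close this, one expands $\partial_z(CFs_z)=CFs_{zz}+(CF)_z s_z$, opens the definitions of $\mathcal{F}_1$, $\mathcal{F}_{2,1}$, and $\mathcal{T}_1$, and compares coefficients in the monomial basis of derivatives $(F,F_z,F_{zz},F_{zzz},C^{\pm 1/2},C_z,C_{zz},s,s_z,s_{zz})$. Each comparison is a polynomial identity that follows from the Leibniz rule and the standard conversion between $\partial_z^k C^{1/2}$ and $\partial_z^k C^{-1/2}$. The main obstacle is sheer bookkeeping: the $t$-pieces across the three operators contain several dozen distinct monomials, and sign errors are easy. Two useful sanity checks besides the already-verified $t=0$ limit are the Einstein specialization of Proposition \ref{PropEinstCondition}, where $\mathcal{F}_1,\mathcal{F}_2,\mathcal{T}$ all vanish identically so both sides of (\ref{EqnDerivBtFlat}) must as well, and the K\"ahler reduction $C=C_0e^{-z}$, under which $\mathcal{F}_1$ simplifies enough that the $\mathcal{F}_1$-coefficient on the right-hand side can be read off cleanly and matched against the $s_{zz}$- and $C_{zz}$-dependence on the left.
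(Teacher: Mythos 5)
Your proposal matches the paper's proof, which consists entirely of the assertion that (\ref{EqnDerivBtFlat}) ``follows from a tedious but elementary computation'': both amount to direct verification of a differential-polynomial identity in the independent functions $s$, $F$, $C$. Your grading by powers of $t$ is a genuine organizational improvement---the $t^0$ component reduces to the already-established first integral (\ref{EqnBConstMotLLF}) via $16\cdot 2=12\cdot\tfrac{8}{3}$---but, like the paper, you leave the order-$t$ bookkeeping unexecuted, so the proposal sits at the same level of completeness as the original.
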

	\begin{proof}
		(\ref{EqnDerivBtFlat}) follows from a tedious but elementary computation.
	\end{proof}
	
	\begin{lemma} \label{LemmaBtCSC}
		At all points where $C,F\ne0$, the $8^{th}$ order system 
		\begin{equation}
			\begin{aligned}
				\frac{\partial}{\partial{}z}\left(CF\frac{\partial{}s}{\partial{}z}\right)=0, \quad\quad
				\mathcal{F}_1(F,C)\;=\;0, \quad\quad
				\mathcal{F}_2(F,C)\;=\;0
			\end{aligned} \label{EqnReducedBtSystem}
		\end{equation}
		is critically determined.
		The operator $\mathcal{T}$ is a constant of the motion, and (\ref{EqnReducedBtSystem}) combined with $\mathcal{T}(F,C)=0$ admits a $7$-parameter family of solutions.
		
		Up to homothety, the $U(2)$-invariant $B^t$-flat metrics is 5-dimensional, and the family of $U(2)$-invariant CSC $B^t$-flat metrics is 4-dimensional.
	\end{lemma}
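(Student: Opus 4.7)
The plan is to recast (\ref{EqnReducedBtSystem}) as an $8^{\text{th}}$-order ODE system in $(s,F,C)$, use the first-integral identity (\ref{EqnDerivBtFlat}) to turn $\mathcal{T}=0$ into an algebraic constraint on initial data, and then quotient by the two homothety symmetries and the CSC condition.

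First I would verify the order count. Solving each equation for its top-order derivative: the Laplace equation gives $s''=-\frac{(CF)_z}{CF}s_z$, second-order in $s$; the operator $\mathcal{F}_2$ has principal part $\frac{8}{3}\mathcal{L}^+(\mathcal{L}^-(F))$ whose $F^{(4)}$ coefficient $\frac{2}{3}$ is nonzero, so $\mathcal{F}_2=0$ determines $F^{(4)}$, fourth-order in $F$; and $\mathcal{F}_1$ has a $C''$ coefficient $12FC^{-1/2}$ arising from $(C^{1/2})''$ in $24\,\partial_z(F\,\partial_zC^{1/2})$, nonzero whenever $F,C\ne 0$, so $\mathcal{F}_1=0$ determines $C''$, second-order in $C$. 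Thus (\ref{EqnReducedBtSystem}) is an $8^{\text{th}}$-order ODE system with smooth right-hand side on $\{C,F\ne 0\}$, and Picard--Lindel\"of yields an $8$-parameter family of local solutions; this is what ``critically determined'' asserts.

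Next I would invoke Lemma \ref{LemmaReductionOfBtFlat}: identity (\ref{EqnDerivBtFlat}) expresses $\partial_z\mathcal{T}$ as a linear combination of $\mathcal{F}_1$, $\mathcal{F}_2$, and $\partial_z(CFs_z)$, all of which vanish on solutions of (\ref{EqnReducedBtSystem}). Hence $\mathcal{T}$ is constant along every solution, so imposing $\mathcal{T}(F,C)=0$ amounts to a single algebraic constraint at one point $z_0$, cutting the $8$-parameter set down to a $7$-parameter family; by Lemma \ref{LemmaUnReduced} this family coincides with the full $B^t$-flat system.

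Finally I would quotient by the two homothety symmetries of the ansatz (\ref{EqnMetrInZ}): translation $z\mapsto z+z_0$, and scaling $g\mapsto\lambda^2g$ which acts by $C\mapsto\lambda^2 C$, $F\mapsto F$, $s\mapsto\lambda^{-2}s$ (a direct check shows $\mathcal{F}_1$ scales homogeneously to $\lambda\mathcal{F}_1$ while $\mathcal{F}_2$ and the Laplace equation are scale-invariant, so each equation is preserved). Neither symmetry fixes a generic solution, so the moduli space of $B^t$-flat metrics up to homothety is $7-2=5$-dimensional. For the CSC subfamily, integrating $\partial_z(CFs_z)=0$ once gives $CFs_z=k$ for some $k\in\mathbb{R}$; on $\{C,F\ne 0\}$ the condition $s\equiv\mathrm{const}$ is equivalent to $k=0$, removing one further parameter, so the CSC moduli space is $(7-1)-2=4$-dimensional. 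The main subtlety throughout is the first step: one must verify that solving $\mathcal{F}_2=0$ for $F^{(4)}$ and $\mathcal{F}_1=0$ for $C''$ is genuinely invertible (no accidental cancellations in the leading coefficients), which a direct inspection of the explicit formulas in Lemma \ref{LemmaUnReduced} confirms whenever $C$ and $F$ are nonzero.
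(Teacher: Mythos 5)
Your proposal is correct and follows essentially the same route as the paper: count the orders (4 in $F$, 2 in $C$, 2 in $s$) to get an $8$-parameter local solution space, use the identity (\ref{EqnDerivBtFlat}) from Lemma \ref{LemmaReductionOfBtFlat} to reduce $\mathcal{T}=0$ to one algebraic constraint on initial data, then subtract two for homothety and one more for the CSC condition $s_z=0$. Your explicit verification of the leading coefficients ($\tfrac23$ on $F^{(4)}$ in $\mathcal{F}_2$, $12FC^{-1/2}$ on $C''$ in $\mathcal{F}_1$) and of the scaling behavior of the equations is a slightly more detailed version of the paper's coefficient inspection, not a different argument.
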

	\begin{proof}
		To ascertain whether the system (\ref{EqnReducedBtSystem}) is critically determined, we examine the coefficients on the derivatives of $s$, $F$, and $C$.
		We find coefficients of the form $FC$, $CF^{-1}$, $C^\frac12$, $C^{-\frac12}$, $FC^{-\frac32}$ and so on.
		Provided $F$ and $C$ remain bounded away from $0$ and $+\infty$, we never find degeneracy at highest order derivatives.
		We conclude that the system (\ref{EqnReducedBtSystem}), which has three unknowns and three equations, remains critically determined when $F,C$ remain non-zero.
		
		We count the degrees of freedom in the solution space.
		The equations $\frac{\partial}{\partial{}z}\left(CF\frac{\partial{}s}{\partial{}z}\right)=0$, $\mathcal{F}_1=0$, and $\mathcal{F}_2=0$ are fourth order in $F$, second order in $C$, and second order in $s$, which makes eight derivatives in total, requiring eight initial conditions.
		Then we restrict to $\mathcal{T}=0$.
		From Lemma \ref{LemmaReductionOfBtFlat}, $\mathcal{T}$ is constant along solutions so is completely determined by the system's initial conditions.
		$\mathcal{T}(F,C)$ is third order in $F$, second order in $C$, and first order in $s$, so $\mathcal{T}=0$ is a single algebraic relationship among the initial conditions, and reduces the solution space from eight dimensions to seven.
		Up to homothety the solution space is therefore $5$-dimensional.
		Requiring $s=const$ is the same as imposing an initial condition of $s_z=0$, so the solution space is reduced by one dimension.
		Thus the CSC $B^t$-flat solution space is 4-dimensional up to homothety.
	\end{proof}
	
	\begin{theorem} \label{ThmNonTrivialBtFlats}
		The ZSC $B^t$-flat solutions, $t\ne\infty$, are precisely the ZSC Bach-flat solutions.
		
		Assume $g$ is $B^t$-flat and conformally extremal, $t\ne0,\infty$.
		Then it is CSC if and only if it is ZSC or Einstein.
		
		{If $t\ne0,\infty$ there exist CSC $B^t$-flat solutions that are not conformally extremal.}
	\end{theorem}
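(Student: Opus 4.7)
The proof breaks into three assertions, which I would dispatch in order.

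Part 1 is essentially formal. The Euler--Lagrange tensor for $B^t$ is $-4\,\mathrm{Bach} + t\,\mathcal{C}$ with $\mathcal{C} = 2(\nabla^2 s - (\triangle s)g - s\cRic)$. Setting $s \equiv 0$ immediately gives $\mathcal{C} \equiv 0$, so the EL equation reduces to $\mathrm{Bach} = 0$; conversely a ZSC Bach-flat metric has both tensors vanishing and hence is $B^t$-flat. This requires no input from the $U(2)$-ansatz.

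For Part 2, assume $g$ is CSC, $B^t$-flat, and conformally extremal with $t\neq0,\infty$; the reverse implications (ZSC $\Rightarrow$ CSC, Einstein $\Rightarrow$ CSC) are immediate. The first step is to note that CSC kills the derivative terms of $\mathcal{C}$, reducing the EL equation to the tensor identity $\mathrm{Bach} = -\tfrac{ts}{2}\cRic$. The second step uses conformal extremality ($\mathcal{L}^+(\mathcal{L}^-(F)) = 1$) to reduce $\mathrm{Bach}$ via Theorem \ref{ThmBachTensor} to a pure $I_2$ tensor: $\mathrm{Bach} = -\tfrac{8}{3C^2}\mathcal{B}(F,F)\,I_2$, where $I_1,I_2$ are the orthogonal basis from the proof of Theorem \ref{ThmBachTensor}. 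Assuming $s\neq 0$, matching the $I_1$-components shows the $I_1$-coefficient of $\cRic$ vanishes, which by Proposition \ref{PropRicci} is exactly $\partial_z^2 C^{-1/2} = \tfrac14 C^{-1/2}$, forcing $C = e^{-z}/(C_5+C_6 e^{-z})^2$. The third step is to show that this setup forces $\mathcal{B}(F,F) = 0$: once we have this, matching the $I_2$-components gives $\cRic \equiv 0$, and together with the form of $C$ and the Bach-flat relation $C_1C_4 = C_2C_3$ one recovers the Einstein system of Proposition \ref{PropEinstCondition}. To force $\mathcal{B}(F,F) = 0$, I would substitute the extremal $F$ and the Einstein-like $C$ into the constant-of-motion equation $\mathcal{T}(F,C) = 0$ from Lemma \ref{LemmaUnReduced}; after using CSC ($\partial_z s = 0$) and the scalar equation $\mathcal{F}_1 = 0$, the expression should collapse to $\mathcal{B}(F,F)$ multiplied by a factor that is nonzero for the $t$ in play, yielding the desired vanishing.

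Part 3 follows by a dimension count. By Lemma \ref{LemmaBtCSC}, CSC $B^t$-flat metrics form a 4-parameter family up to homothety. By Part 2, the conformally extremal ones lie in (Einstein) $\cup$ (ZSC Bach-flat). Einstein forms a 2-parameter family by Proposition \ref{PropEinstCondition}. The ZSC Bach-flat locus has $F$ drawn from the 3-parameter Bach-flat family and $C$ determined (given $F$ and $s=0$) by the second-order ODE $\mathcal{F}_1=0$, so it carries at most $3+2$ parameters before modding out by translation in $z$ and rescaling the metric, giving strictly fewer than 4 dimensions. Since these two sub-loci together do not exhaust the 4-parameter CSC $B^t$-flat family, there must exist CSC $B^t$-flat metrics that are not conformally extremal.

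The main obstacle is the algebraic simplification in Part 2: substituting the extremal $F$ and the Einstein-like $C$ into $\mathcal{T}(F,C)$ produces an expression in the exponentials $e^{\pm z/2}, e^{\pm z}, e^{\pm 2z}$ with coefficients that are polynomial in the constants $C_1,\ldots,C_6$ and $t$, and one must both verify that after using $\mathcal{F}_1=0$ the whole expression reduces to $\mathcal{B}(F,F)$ times a nonvanishing factor, and check that the factor is nonzero for the range of $t$ in question. All the other steps are either immediate from earlier propositions or are standard dimension-counting arguments.
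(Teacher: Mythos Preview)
Parts 1 and 3 are fine and match the paper's reasoning; your dimension count in Part 3 is in fact a bit more careful than the paper's.

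The gap is in Part 2, precisely at the point you flag as the main obstacle. Your hoped-for simplification of $\mathcal{T}(F,C)$ to $\mathcal{B}(F,F)$ times a nonvanishing factor is false. In the K\"ahler subcase $C_6=0$ (so $C=C_5^{-2}e^{-z}$), after imposing CSC and $\mathcal{F}_1=0$ one finds $C_1=0$, $C_2=s/(24C_5^{2})$, and then (as the paper computes)
\[
\mathcal{T}\;=\;-\tfrac12 e^{2z}\,s\bigl(3st\;-\;4e^{2z}(1-3t)C_3C_5^{2}\bigr),
\]
whereas $\mathcal{B}(F,F)=3C_2C_3=sC_3/(8C_5^{2})$. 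The $e^{2z}$-term $-\tfrac32 s^{2}t$ in $\mathcal{T}$ is independent of $C_3$, so $\mathcal{T}$ is not a multiple of $\mathcal{B}(F,F)$. What actually happens is that $\mathcal{T}=0$ forces $s=0$ (since $t\neq0$), a contradiction --- this case is \emph{empty}, not Einstein.

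The paper's route from the point you have already reached (extremal $F$, Einstein-like $C=e^{-z}/(C_5+C_6e^{-z})^{2}$) is different: substitute directly into the scalar equation $\mathcal{F}_1=0$. This yields the three coefficient relations
\[
C_5(C_1C_5-C_2C_6)=0,\qquad C_6(C_4C_6-C_3C_5)=0,\qquad s=-24(C_2C_5^{2}-2C_5C_6+C_3C_6^{2}),
\]
and then one does a short case split. If both $C_5,C_6\neq0$ the first two relations are exactly the Einstein constraints of Proposition~\ref{PropEinstCondition}, and no further equation is needed. If $C_6=0$ (or symmetrically $C_5=0$) the metric is K\"ahler; here $\mathcal{F}_1=0$ does \emph{not} force Einstein, and one must invoke $\mathcal{T}=0$ to produce the contradiction $s=0$ as above. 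So the missing ingredient in your outline is not a cleaner algebraic collapse of $\mathcal{T}$, but rather this two-case argument using $\mathcal{F}_1=0$ as the primary tool.
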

	\begin{proof}
		The CSC $B^t$-flat equations are (\ref{EqnFourBtFlatEqns}) with initial condition $s_z=0$.
		As discussed above, this is a system with 6 degrees of freedom (4 up to homothety).
		First we examine the $s=0$ case.
		In this case $\mathcal{T}=16\mathcal{B}$, so $\mathcal{B}(F,F)=0$ and so the metric is Bach-flat.
		Thus $F$ lies in the 3-parameter family given by Lemma \ref{LemmaBSols}.
		Fixing $F$, $\mathcal{F}_1=0$ gives a 2-parameter family of solutions for $C$ and we obtain the expected 5-parameter solution space of ZSC Bach-flat metrics (which has 3 free parameters up to homothety).
		
		Next assume the metric is CSC $B^t$-flat, $s\ne0$, and $g$ conformally extremal.
		By Proposition \ref{PropExtremal}, $F=\frac12C_1e^{-2z}+C_2e^{-z}+C_3e^z+\frac12C_4e^{2z}$.
		Plugging in this, along with $\frac{\partial{}s}{\partial{}z}=0$ into $\mathcal{F}_2=0$, we obtain
		\begin{equation}
			\left(\frac{\partial^2}{\partial{}z^2}C^{-\frac12}-\frac14C^{-\frac12}\right)\;=\;0. \label{EqnsCcase}
		\end{equation}
		Therefore $C=\frac{e^{-z}}{(C_5+C_6e^{-z})^2}$.
		Plugging this into $\mathcal{F}_1=0$ provides
		\begin{equation}
			\begin{aligned}
				&0\;=\;
				C_5(C_1C_5-C_2C_6)e^{-z}
				+\left(-\frac{s}{24}+C_2C_5^2-2C_5C_6-C_3C_6^2\right)
				+C_6(C_4C_6-C_3C_5)e^z. \label{EqnConfBtScalRelation}
			\end{aligned}
		\end{equation}
		We have the seven unknowns $C_1,C_2,C_3,C_4,C_5,C_6,$ and $s$, and (\ref{EqnConfBtScalRelation}) contributes three relations so we have a 4-parameter solution space.
		We consider the possibilities.
		First, the expression for $C$ makes it impossible that $C_5$ and $C_6$ are both zero.
		If $C_5\ne0,C_6=0$ then $C=C_5^{-2}e^{-z}$ so the metric is K\"ahler with respect to $J^+$, and (\ref{EqnConfBtScalRelation}) forces $C_1=0$, $C_2=\frac{s}{24C_5^{2}}$.
		Then $0=\mathcal{T}$ is
		\begin{equation}
			0\;=\;\mathcal{T}
			\;=\;-\frac12e^{2z}s\left(3st-4e^{2z}(1-3t)C_3C_5^2\right),
		\end{equation}
		and because $t\ne0$, this forces $s=0$, contradicting the assumption $s\ne0$.
		(Similarly, assuming $C_5=0,C_6\ne0$ also gives $s=0$, again contradicting $s\ne0$.)
		
		Therefore both $C_5,C_6\ne0$.
		Then (\ref{EqnConfBtScalRelation}) forces $C_1C_5-C_2C_6=0$, $C_4C_6-C_3C_5=0$, and by Proposition \ref{PropEinstCondition} the metric is Einstein.
		We conclude that if a CSC $B^t$-flat metric is conformally extremal, it is ZSC or Einstein.
		
		Finally we prove that some CSC $B^t$-flat metrics are not conformally extremal.
		The family of Einstein solutions (not modding by homothety) is 4-dimensional, and therefore, by what we just proved, the family of CSC $B^t$-flat that are conformally extremal is also 4-dimensional.
		But the space of CSC $B^t$-flat metrics is 6-dimensional, so we conclude that CSC $B^t$-flat metrics exist that are not conformally extremal.
	\end{proof}

	\subsection{Summary} \label{SubsecCanonicalTable}
	
	We summarize our findings in the following chart.
	All of the metrics we have discussed have $F$ of the form $F=1+\frac12C_1e^{-2z}+C_2e^{-z}+C_3e^z+\frac12C_4e^{2z}$, the exceptions being the $\delta{}W^\pm=0$ and $B^t$-flat metrics, whose forms cannot be expressed easily.

	{
		\small
		\begin{longtable}{llll}
			Condition      & Conditions & Conf. & Scalar \\
			on the Metric  & on coefs.          & Factor & Curvature \\
			\hline
			\\
			\endhead
			Extremal K\"ahler & None & $C_0e^{-z}$ & $-24\frac{C_2+C_1e^{-z}}{C_0}$ \\
			\\
			CSC-K & $C_1=0$ & $C_0e^{-z}$ & $-24C_0^{-1}C_2$ \\
			& \multirow{4}{*}{\shortstack[l]{$C_1C_5-C_2C_6=0$, \\ $C_3C_5-C_4C_6=0$ }} & &  \multirow{4}{*}{\shortstack[l]{$-24(C_2C_5^2$\\$\;\;-2C_5C_6+C_3C_6^2)$}}\\
			Einstein &  & \hspace{-0.2in}$\frac{e^{-z}}{\left(C_5+C_6e^{-z}\right)^2}$ & \\
			\\
			K\"ahler-Einstein & $C_1=C_3=0$ & $C_0e^{-z}$ & $-24C_0^{-1}C_2$ \\
			&\multirow{4}{*}{\shortstack[l]{\\$C_1C_5-C_2C_6=0$, \\ $C_3C_5-C_4C_6=0$ \\ \hspace{-0.225in}$C_2C_5^2-2C_5C_6+C_3C_6^2=0$ }} \\
			\multirow{2}{*}{Ricci-Flat} &  & \hspace{-0.2in}$\frac{e^{-z}}{\left(C_5+C_6e^{-z}\right)^2}$ & 0 \\
			\\
			\\
			$W^+=0$ & $C_1=C_2=0$ & \textit{any} & \;\;$0$, if K \\
			\\
			$W^-=0$ & $C_3=C_4=0$ & \textit{any} & $-24\frac{C_2+C_1e^{-z}}{C_0}$, if K \\
			\\
			$\delta{}W=0$ & Identical to Einstein & \textit{any} & \\
			\\
			$\delta{}W^\pm=0$ & \hspace{-0.2in} $e^{\pm\frac32z}(\mathcal{L}^{\pm}(F)-1)\sqrt{C}=C_5$ & \hspace{-0.2in} & \\
			\\
			K\"ahler, $\delta{}W^+=0$ & Identical to CSC-K & $C_0e^{-z}$ & $-24C_0^{-1}C_2$ \\
			\\
			K\"ahler, $\delta{}W^-=0$ & $C_3=0$ & $C_0e^{-z}$ &  $-24\frac{C_1e^{-z}+C_2}{C_0}$ \\
			\\
			K\"ahler, $\delta{}W=0$ & Identical to KE & $C_0e^{-z}$ & $-24C_0^{-1}C_2$ \\
			\\
			Bach-flat &  $C_1C_4-C_2C_3=0$\hspace{0.5in} & \textit{any} &
		\end{longtable}
		\textit{Notes:}
		``K'' or ``K\"ahler'' means K\"ahler with respect to $J^+$; for the $J^-$ case replace $z$ with $-z$ and exchange $W^+$ and $W^-$.
	}

	\section{AmbiK\"ahler Pairs} \label{SubsecModTaubs}
	
	AmbiK\"ahler pairs are from \cite{ACG16}, where they were studied in connection with toric manifolds.
	An \textit{ambiK\"ahler structure} on a manifold is a pair of K\"ahler manifolds $(M^n,J_1,g_1)$ and $(M^n,J_2,g_2)$ where the complex stuctures $J_1$ and $J_2$ produce opposite orientations and the K\"ahler metrics $g_1$ and $g_2$ are conformal.
	Either member of the pair the \textit{ambiK\"ahler transform} of the other.
	From Lemma \ref{LemmaClosedJ}, every $U(2)$-invariant metric on a 4-manifold has an ambiK\"ahler structure using $J^\pm$, conformally related by $e^{\pm2z}$.
	
	Consequently the classic K\"ahler metrics---see the chart in Section \ref{SecClassic}---all have ambiK\"ahler transforms.
	Most of these ambiK\"ahler transforms produce nothing interesting.
	The ambiK\"ahler transform of the Burns metric is the Fubini-study metric, for example, and the transforms of the other LeBrun instanton metrics are extremal K\"ahler metrics on weighted projective spaces---we call these the ``modified LeBrun metrics'' on these weighted projective spaces (these metrics were found by Bryant in \S2.2 of \cite{Bry01}, although their ambiK\"ahler relationship with the LeBrun instantons was unkown).
	The transform of an odd Hirzebruch surface is precisely itself.
	The transforms of the Taub-NUT-$\Lambda$ and Eguchi-Hanson-$\Lambda$ metrics have curvature singularities.
	
	However two cases are more interesting: the Taub-NUT and Taub-bolt.
	The Taub-NUT is hyperK\"ahler with complex structures $I^-$ and its left-translates.
	By Proposition \ref{PropIntegrability} we have
	\begin{equation}
		F\;=\;(1-e^{-z})^2, \quad C\;=\;\frac{C_0e^{-z}}{(1-e^{-z})^2}. \label{EqnTaubNUTBeforeAmbiK}
	\end{equation}
	(In the Appendix we compute $F$ and $C$ explicitly from the classic expression; see also Propositions \ref{PropIntegrability} and \ref{EqnTwoRFEMetrics}.)
	The coordinate range is $z\in(0,\infty]$, the nut is located at $z=\infty$, and the ALF end is located at $z=0$ (Figure \ref{FigTaubNUTs}).
	Separate from this, we have an ambiK\"ahler structure given by complex structures $J^-$ and $J^+$ and respective conformal factors $C_0e^{z}$ and $C_0e^{-z}$ in place of the $C$ of (\ref{EqnTaubNUTBeforeAmbiK}).
	These give the conformal orbit of the classic Taub-NUT three different canonical metrics: itself which is hyperK\"ahler, a 2-ended complete scalar-flat K\"ahler metric, and a complete extremal K\"ahler metric.
	We call the latter two the \textit{modified Taub-NUT metrics of the first and second kinds.}
	
	The modified Taub-NUT of the first kind has complex structure $J^-$ and conformal factor $C=C_0e^{z}$, which gives it the same orientation as the original Taub-NUT by Lemma \ref{LemmaRighInvStructs}.
	This metric is two-ended: the nut at $z=-\infty$ becomes an ALE end, and the ALF end at $z=0$ becomes a cusp-like end.
	This complete, 2-ended metric is scalar flat: using Proposition \ref{PropCurvatureIntro} one computes $s=0$.
	Second, letting $J^+$ be the complex structure and choosing conformal factor $C=C_0e^{-z}$ produces the modified Taub-NUT of the second kind.
	This metric is one-ended: it still has a ``nut'' at $z=\infty$, but the conformal change turns the ALF end into a cusp-like end.
	By Theorem \ref{PropExtremal} it is an extremal K\"ahler metric. It has scalar curvature $s=48(1-e^{-z})$, which is positive and approaches $0$ asymptotically along the cusp.
	
	\begin{figure}[ht]
		\includegraphics[scale=0.31]{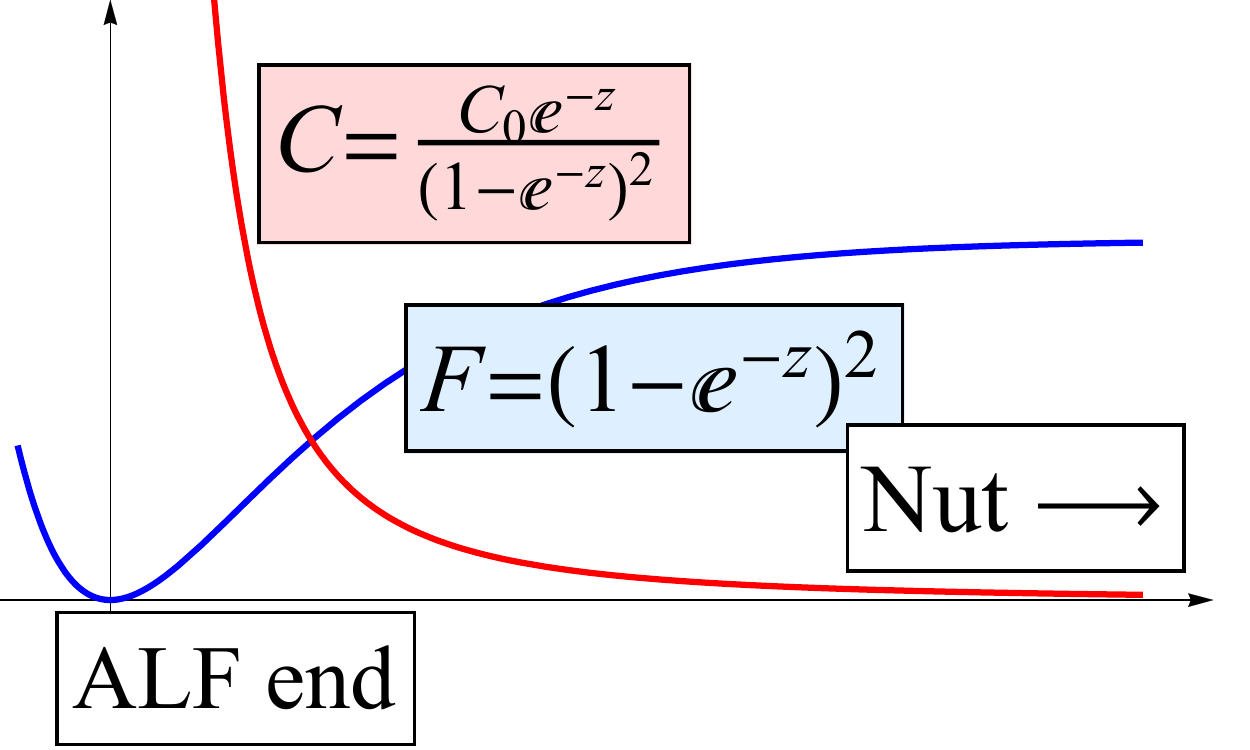} \hspace{0.1in}
		\includegraphics[scale=0.31]{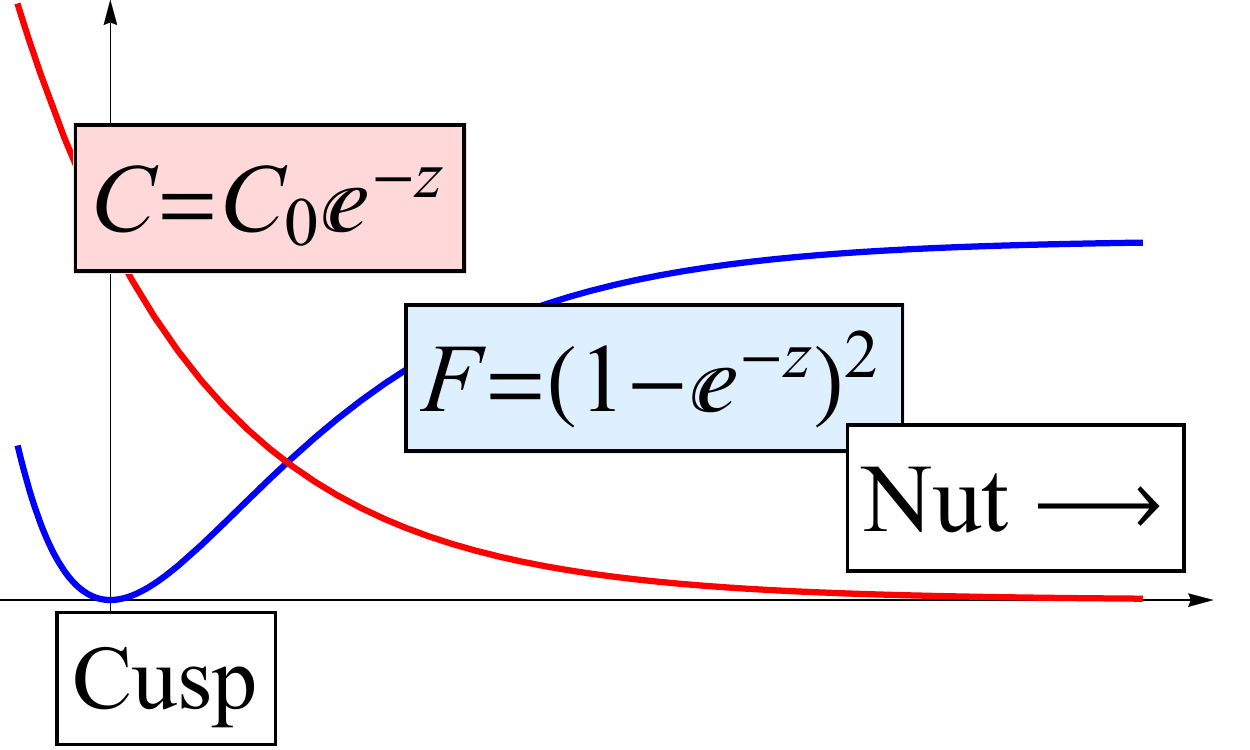} \hspace{0.1in}
		\includegraphics[scale=0.31]{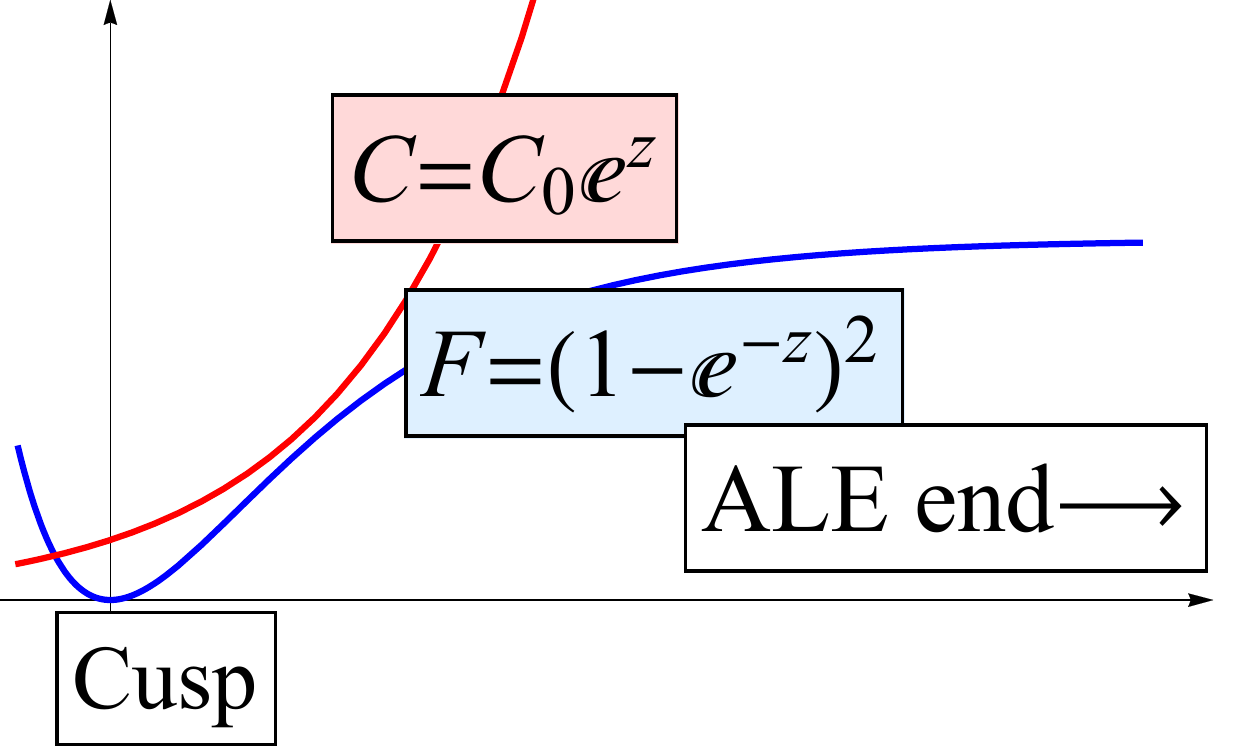}
		
		\caption{
			\it The Taub-NUT and the modified Taub-NUTs of the first and second kinds.
			\label{FigTaubNUTs}
		}
	\end{figure}

	These modified Taub-NUT metrics have both been discovered already, although their conformal relationship with the classic Taub-NUT has not been uncovered until now.
	The modified Taub-NUT of the first kind on $\mathbb{C}^2\setminus\{(0,0)\}$ is the ZSC-K metric of \cite{FYZ} for $n=2$, and the modified Taub-NUT of the second kind is a complete Bochner-flat metric of the type considered in \S2.2 of \cite{Bry01}; see also \cite{TachLiu70}.
	
	\begin{figure}[ht]
		\includegraphics[scale=0.31]{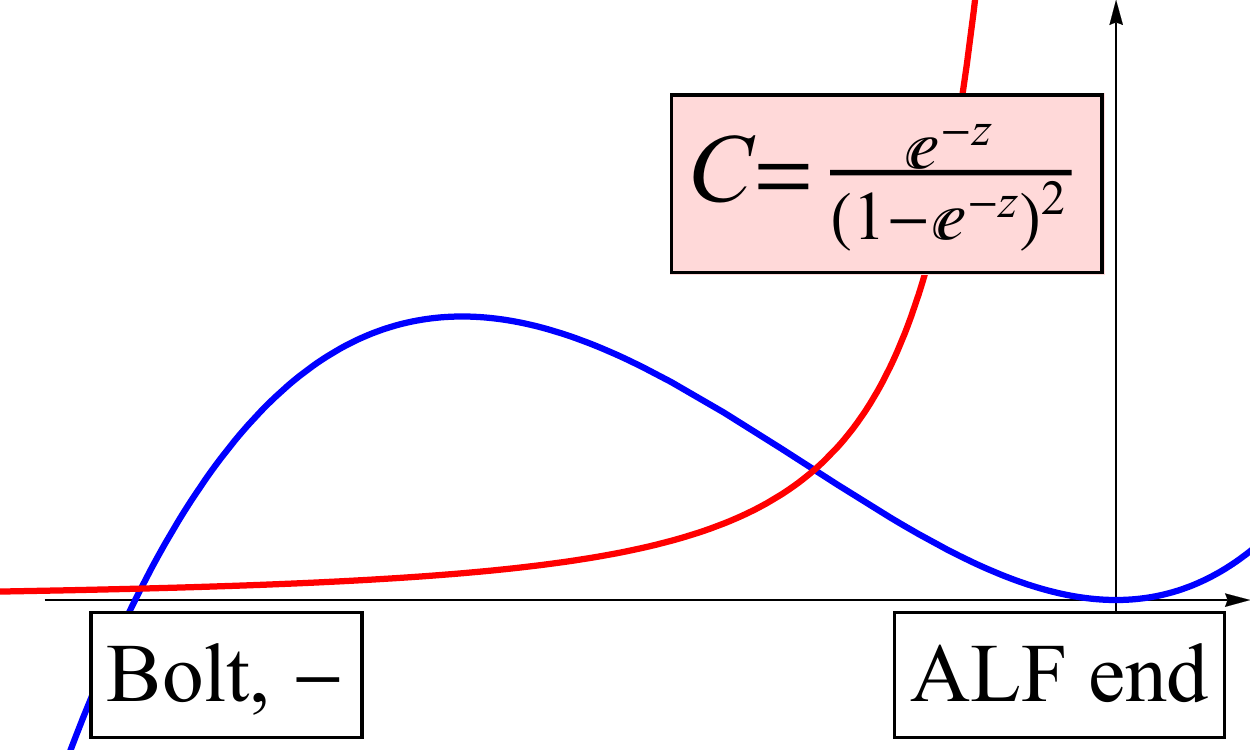}
		\includegraphics[scale=0.31]{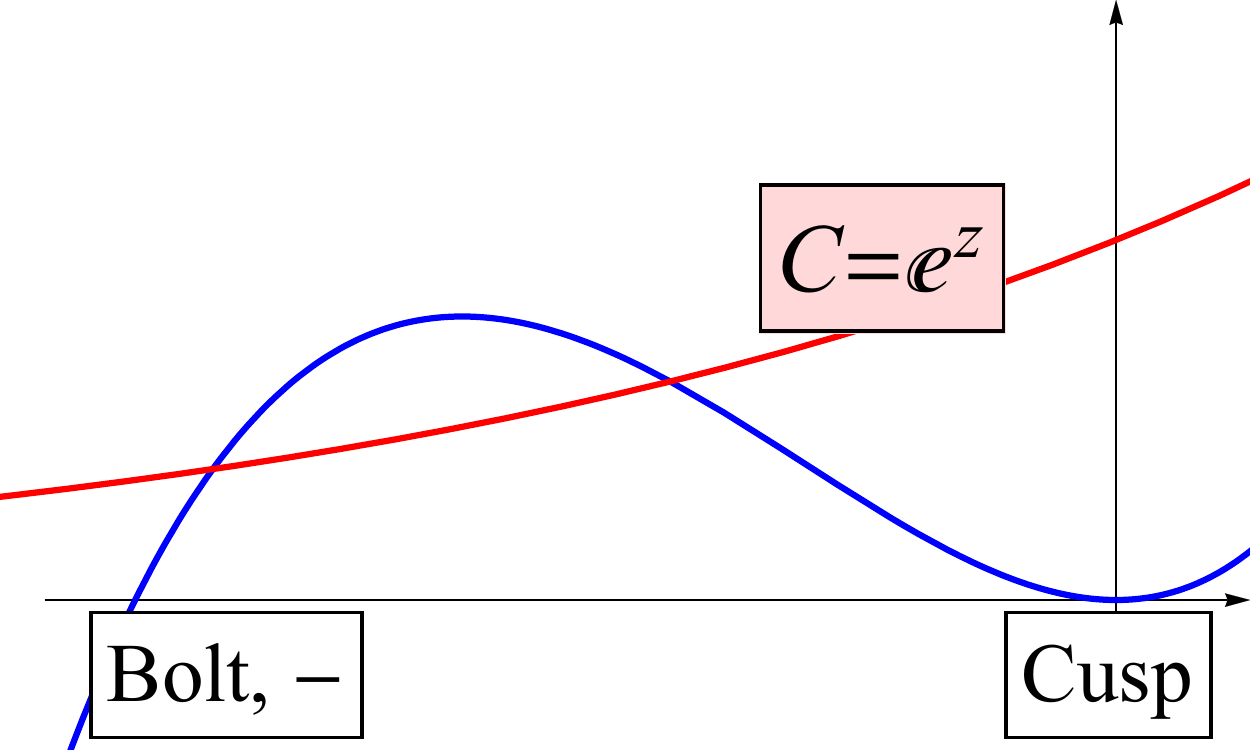}
		\includegraphics[scale=0.31]{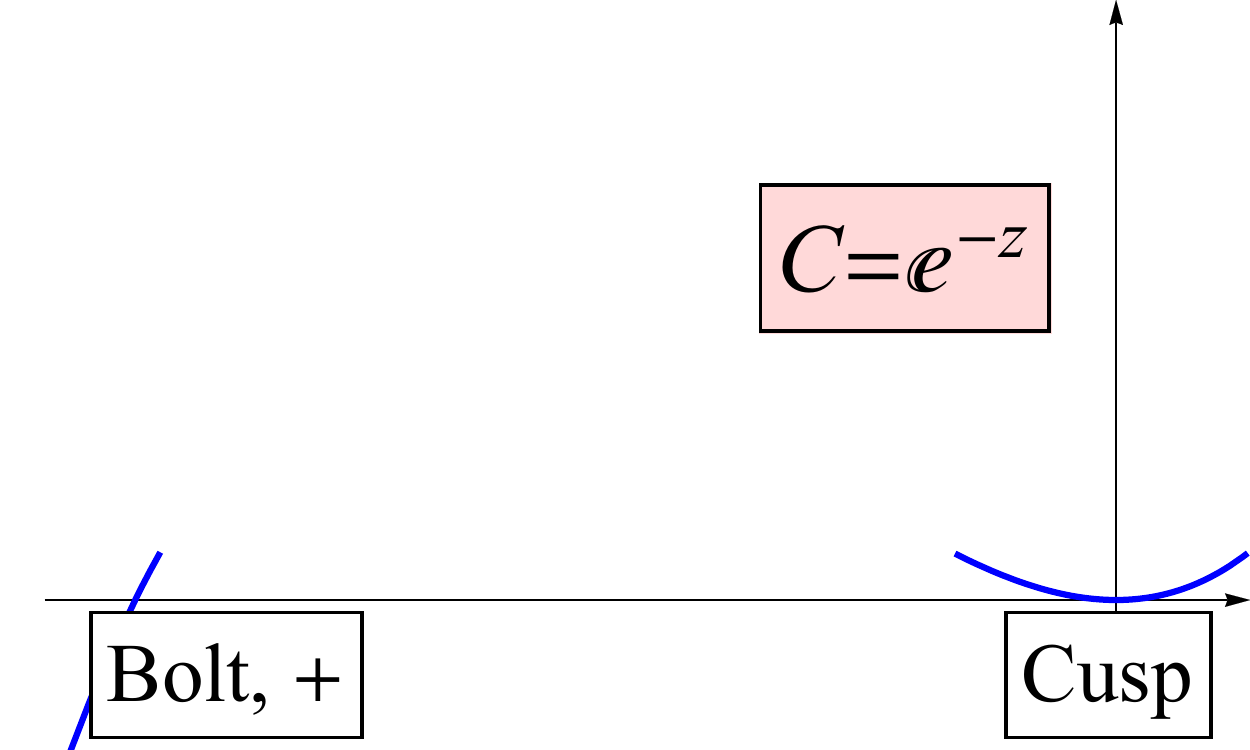}
		
		\caption{
			\it The first image is the Taub-bolt, and the second and third are the modified Taub-bolts of the first and second kinds.
		} \label{FigTaubBolts}
	\end{figure}
	
	The Taub-bolt is Ricci-flat but not K\"ahler (and certainly not hyperK\"ahler) with respect to to any complex stucture\footnote{If it were K\"ahler with respect to any complex structure, whether a complex structure considered here or not, Derdzhinski's theorem would imply it is half-conformally flat which it is not.} (this is possible in the non-compact but not the compact case).
	The metric is
	\begin{equation}
		C\;=\;\frac{C_0e^{-z}}{(1-e^{-z})^2}, \quad\quad
		F\;=\;1-\frac18e^{-2z}+\frac14e^{-z}-\frac94e^z+\frac98e^{2z}
	\end{equation}
	on $z\in[-\log(3),0)$.
	This metric is complete, Ricci-flat, Bach-flat, but not half-conformally flat: both $W^+$ and $W^-$ are non-zero by Proposition \ref{PropHCFmetrics}.
	It has an ALF end at $z=0$ and a bolt of self-intersection $-1$ at $z=-\log(3)$.
	The underlying manifold is the total space of $\mathcal{O}(-1)$.
	It is conformally K\"ahler with respect to either $J^-$ or $J^+$, creating an ambiK\"ahler pair---the \textit{modified Taub-bolt metrics of the first and second kinds,} respectively.
	Changing between $J^-$ and $J^+$ reverses the orientation, so changes the self-intersection number of the bolt from $-1$ to $+1$.
	
	With the complex structure $J^-$ and conformal factor $C=C_0e^z$ we obtain an extremal K\"ahler metric we call the \textit{modified Taub-bolt of the first kind}.
	This metric continues to have a bolt of self-intersection $-1$ at $z=-\log(3)$, but the ALF end at $z=0$ has been transformed into a cusp-like end.
	The scalar curvature is $s=54C_0^{-1}(1-e^{z})$, which is positive and approaches $0$ along the cusp.
	Its underlying complex manifold is the total space of $\mathcal{O}(-1)$.
	Its ambiK\"ahler transform has complex structure $J^+$ and conformal factor $C=C_0e^{-z}$; we call this extremal K\"ahler metric the \textit{modified Taub-bolt of the second kind}.
	The orientation has been reversed and the bolt has self-intersection $+1$ at $z=-\log(3)$.
	The ALF end at $z=0$ has been conformally transformed into a cusp-like end.
	The scalar curvature is $s=6C_0^{-1}(-1+e^{-z})$, which again is positive and approaches zero asymptotically along the cusp.
	Its underlying complex manifold is the total space of $\mathcal{O}(+1)$.
	The modified Taub-bolt of the second kind is the only complete extremal K\"ahler metric on any surface with a rational curve of positive self-intersection that is known to the authors.

	\appendix
	
	\section{The Classic Metrics} \label{SecClassic}
	
	Numerous $U(2)$-invariant 4-metrics have been developed, the basic method going back to (at least) 1916 \cite{Schw16}.
	In Euclidean signature, most of these metrics were developed in the late 1970's with notable works by Plebanski-Demianski \cite{PD76}, Eguchi-Hanson \cite{EH78}, Gibbons-Hawking \cite{GH78}, Page \cite{Page78a} \cite{Page78b}, Gibbons-Pope \cite{GPo78}, and Gibbons-Perry \cite{GPe80}.
	The LeBrun metrics \cite{LeB88} appeared slightly later.
	See also \cite{EGH80} and \cite{BeBer82}.
	
	\subsection{Transcribing the classic metrics} \label{SubSecClassic}
	
	Fitting the classic metrics into the present framework is straightforward.
	As an illustration we consider the Taub-NUT metric, classically
	\begin{equation}
		g\;=\;\frac14\frac{r+m}{r-m}dr^2
		+4m^2\frac{r-m}{r+m}(\eta^1)^2
		+(r^2-m^2)\left((\eta^2)^2+(\eta^3)^2\right)
	\end{equation}
	where $r\in[m,\infty)$.
	Solving $-dz=\frac{2\sqrt{AB}}{C}dr$ for $z$, we obtain coordinate changes
	\begin{equation}
		\begin{aligned}
			z\;=\;-\log\frac{r-m}{r+m}\quad\quad\text{and}\quad\quad
			r\;=\;\frac{m(1-e^{-z})}{1+e^{-z}}
		\end{aligned}
	\end{equation}
	where $z\in(0,\infty]$.
	Then $C$ and $F=\frac{B}{C}$ are
	\begin{equation}
		\begin{aligned}
			&C=(r^2-m^2)=\frac{4m^2e^{-z}}{(1-e^{-z})^2}
			\;\;\text{and}\;\;
			F=\frac{4m^2}{(r+m)^2}
			=(1-e^{-z})^2.
		\end{aligned}
	\end{equation}
	Similarly one may transcribe any of the classic $U(2)$-invariant metrics in this way.
	We list below the classic metrics and their expressions via conformal factor $C$ and a function $F$ of the form $F=1+\frac12C_1e^{-2z}+C_2e^{-z}+C_3e^{z}+\frac12C_4e^{2z}$.
	
	{
		\small
		\begin{longtable}{lllllll}
			& \hspace{-0.15in}Classic Coefs  & Conf.   & $F$ or coefs. & Special & Underlying \\
			Name & $A,B,C$        & Factor & of $F$ & Metric & Manifold   \\
			\hline
			\endhead
			\\
			\nopagebreak\multirow{3}{*}{\footnotesize Taub-NUT} & $\frac14\frac{r+m}{r-m}$ &       & & RF, HK &  \\
			\nopagebreak& $4m^2\frac{r-m}{r+m}$ & \hspace{-0.05in}$\frac{4m^2e^{-z}}{(1-e^{-z})^2}$ & $(1-e^{-z})^2$ & HCF & $\mathbb{C}^2$ \\
			\nopagebreak& $r^2-m^2$ & & & C-Extr & \\
			\\
			\nopagebreak Modified &   &       & & Extr-K &  \\
			\nopagebreak \; Taub-NUT, & See Sec. \ref{SubsecModTaubs}. & $e^{z}$ & $(1-e^{-z})^2$ & HCF & $\mathbb{C}^2$ \\
			\nopagebreak \; First Kind& & & & C-RF \\
			\\
			\nopagebreak & & & & \\
			\nopagebreak Modified & & & & SFK & \\
			\nopagebreak\; Taub-NUT, & {See Sec. \ref{SubsecModTaubs}.} & {$e^{-z}$} & $(1-e^{-z})^2$ & HCF & $\mathbb{C}^2\setminus\{0\}$ \\
			\nopagebreak\; Second Kind & & & & C-RF \\
			\\
			\nopagebreak\multirow{4}{*}{Taub-NUT-$\Lambda$} & $\frac{r^2-m^2}{4\Delta}$ & \hspace{-0.2in}\multirow{4}{*}{$\frac{4m^2e^{-z}}{(1-e^{-z})^2}$} & \hspace{-0.2in}$\frac{m-L+\frac{1}{3}m^3\Lambda}{m}$, & & $O(-k)$ \\
			\nopagebreak & $\frac{4m^2\Delta}{r^2-m^2}$ & & \hspace{-0.25in}$-\frac{m-L+\frac{1}{3}m^3\Lambda}{m}$, & Einst-$\Lambda$ & sometimes,\\
			\nopagebreak & $r^2-m^2$ & & \hspace{-0.25in}$-\frac{m+L-\frac{1}{3}m^3\Lambda}{m}$, & BF & but usually \\
			\nopagebreak & $\Delta=*$ & & \hspace{-0.2in}$\frac{m+L-\frac{1}{3}m^3\Lambda}{m}$ & C-Extr & singular\\
			\\
			\nopagebreak \multirow{4}{*}{Taub-bolt} & \hspace{-0.05in}$\frac{r^2-m^2}{r^2-\frac52mr+m^2}$ & &  \\
			\nopagebreak & \hspace{-0.25in}$\frac{16m^2(r^2-\frac52mr+m^2)}{r^2-m^2}$ & \hspace{-0.15in}$\frac{16m^2e^{-z}}{(1-e^{-z})^2}$ & \hspace{-0.15in}$-\frac14,\frac14,-\frac94,\frac94$ & BF & $O(-1)$ \\
			\nopagebreak & \hspace{-0.05in}$4(r^2-m^2)$ & & & RF \\
			\\
			\nopagebreak Modified & & & & Extr-K & $O(-1)$ \\
			\nopagebreak \;Taub-bolt, & See Sec. \ref{SubsecModTaubs}. & $e^z$ & \hspace{-0.15in}$-\frac14,\frac14,-\frac94,\frac94$ & BF \\
			\nopagebreak \;first kind & & & & C-RF \\
			\\
			\nopagebreak Modified & & & & Extr-K & $O(+1)$ \\
			\nopagebreak \;Taub-bolt, & See Sec. \ref{SubsecModTaubs}. & $e^{-z}$ & \hspace{-0.15in}$-\frac14,\frac14,-\frac94,\frac94$ & BF \\
			\nopagebreak \;second kind & & & & C-RF \\
			\\
			\nopagebreak\multirow{3}{*}{Burns} & $(1-\frac{m^2}{r^2})^{-1}$ & & \\
			\nopagebreak & $r^2(1-\frac{m^2}{r^2})$ & $e^{z}$ & \hspace{-0.15in}$0,-m^2,0,0$ & SFK & $O(-1)$ \\
			\nopagebreak & $r^2$ & & & \\
			\\
			\nopagebreak & $(1-\frac{m^4}{r^4})^{-1}$ & & \\
			\nopagebreak Eguchi & $r^2(1-\frac{m^4}{r^4})$ & $e^{z}$ & \hspace{-0.15in}$-2m^4,0,0,0$ & RF & $O(-2)$ \\
			\nopagebreak \quad -Hanson & $r^2$ \\
			\\
			& $A^{-1}$ & & \hspace{-0.15in}$-2m^4(k-1),$ & SFK \\
			\nopagebreak{LeBrun} & $r^2A$ & $e^{z}$ & \hspace{-0.05in}$m^2(k-2)$, & RF iff & $O(-k)$ \\
			\nopagebreak & $r^2$ & & \hspace{-0.05in}$0$, $0$ & \;\;$k=2$ \\
			\\
			\nopagebreak & & & \hspace{-0.15in}$-2m^4(k-1)$, & Extr-K, & One point\\
			\nopagebreak Modified & See Sec. \ref{SubsecModTaubs} & $e^{-z}$ & \hspace{-0.05in}$m^2(k-2),$ & CSC iff & compact. \\
			\nopagebreak \hspace{0.1in}LeBrun & & & \hspace{-0.05in}$0$, $0$ & \;\;$k=1$ & of $O(+k)$\\
			\\
			\nopagebreak & \hspace{-0.2in} $(1-\frac{m^4}{r^4}-\frac{\Lambda}{6}r^2)^{-1}$ & & & & \hspace{-0.1in}$O(-k)$ iff \\
			\nopagebreak Eguchi& \hspace{-0.0in}$r^2-\frac{m^4}{r^2}-\frac{\Lambda}{6}r^4$ & $e^{z}$ & $-2m^4$, $0$, & \hspace{-0.1in}Einst-$\Lambda$ & \hspace{-0.1in}$m^4=\frac{4(1+k)}{3}$, \\
			\nopagebreak \;\;-Hanson-$\Lambda$ & $r^2$ & & $-\frac16\Lambda$, $0$ & & \hspace{-0.1in}$\Lambda=4-2k$, \\
			\nopagebreak & & & & & \hspace{-0.1in}and $k\ge2$ \\
			\\
			\nopagebreak\multirow{3}{*}{Fubini-Study} & $\left(1+\frac{\Lambda}{6}r^2\right)^{-2}$ & & \\
			\nopagebreak & $r^2\left(1+\frac{\Lambda}{6}r^2\right)^{-2}$ & $6e^{-z}$ & \hspace{-0.0in}$0,-\Lambda,0,0$ & Einst-$\Lambda$ & $\mathbb{C}P^2$ \\
			\nopagebreak & $r^2\left(1+\frac{\Lambda}{6}r^2\right)^{-1}$ \\
			\\
			\nopagebreak\multirow{4}{*}{Page} & & & $\approx-.2442$ & \\
			\nopagebreak & \multirow{2}{*}{See \cite{ACG03}} & \hspace{-0.2in}$\frac{14.931e^{-z}}{\Lambda(1+e^{-z})^2}$, & $\approx-.2442$ & Einst-$\Lambda$ & $\mathbb{C}P^2\#\overline{\mathbb{C}P^2}$ \\
			\nopagebreak & & \hspace{-0.2in}\textit{(approx.)} & $\approx-.2442$ & BF & \\
			\nopagebreak & & & $\approx-.2442$ & C-Extr
		\end{longtable}
		\noindent\textit{Notes:} 
		In the Taub-NUT-$\Lambda$ metric,
		$\Delta=(r^2-L)^2+\frac{\Lambda}{4}\left(-\frac13r^4+2m^2r^2+m^4\right)$.
		In the LeBrun metrics, $A=\left(1-\frac{m^2}{r^2}\right)\left(1+(k-1)\frac{m^2}{r^2}\right)$ and $k=1$ gives the Burns metric and $k=2$ gives the Eguchi-Hanson metric.
		In the Page metric, the value of the coefficients is $C_1=C_2=C_3=C_4=\frac{\sinh(z_0)-\cosh(z_0)}{(2+\cosh(2z_0))\sinh(z_0)}$ and $z_0$ is the unique real solution of $e^{4z_0}-4e^{z_0}-3=0$.
	}

	\subsection{The Page Metric} \label{SubsecPageMetr}
	
	The Page metric, which won't fit on the chart, has classic coefficients
	\begin{equation}
		\begin{aligned}
			&A(r)=\frac{3(1+\nu^2)}{\Lambda}\left(\frac{1-\nu^2r^2}{(3-\nu^2-\nu^2(1+\nu^2)r^2)}\frac{1}{1-r^2}\right) \\
			&B(r)=\frac{3(1+\nu^2)}{\Lambda}\left((1-r^2)\frac{(3-\nu^2-\nu^2(1+\nu^2)r^2)}{(3+\nu^2)^2(1-\nu^2r^2)}\right), \\
			&C(r)=\frac{3(1+\nu^2)}{\Lambda}\left(\frac{1-\nu^2r^2}{\nu(3+\nu^2)}\right)
		\end{aligned}
	\end{equation}
	where $\nu$ is the positive root of $\nu^4+4\nu^3-6\nu^2+12\nu-3=0$, about $\nu\approx.28$.
	Any other choice both disrupts the bolting condition and also creates a non-canonical metric.
	In this paper's formulation, $C$ and $F$ are
	\begin{equation}
		\begin{aligned}
			C&\;=\;\frac{12(1+\nu^2)}{\Lambda\nu(3+\nu^2)}\frac{e^{-z}}{(1+e^{-z})^2} \quad\quad \text{and} \\
			F&\;=\;
			\frac{-\nu^4+6\nu^2+3}{4\nu(3+\nu^2)}
			-\frac{(1-\nu^2)^2}{4\nu(3+\nu^2)}\left(\cosh(2z)
			+2\cosh(z)\right). \label{EqnPageMetric}
		\end{aligned}
	\end{equation}
	The stated choice of $\nu$ makes the constant $\frac{-\nu^4+6\nu^2+3}{4\nu(3+\nu^2)}$ equal to $1$.
	
	The Page metric is not K\"ahler, but is conformal to one of the extremal K\"ahler metrics constructed by Calabi \cite{Cal82} on Hirzebruch surfaces.
	We mentioned the ``odd'' Hirzebruch sufaces $\Sigma_{2k-1}$ in Section \ref{SubSubsecBoltsNuts}; using our techniques we can construct many extremal metrics on each $\Sigma_{2k-1}$.
	To do so let $C=C_0e^{-z}$ so the metric is K\"ahler with respect to $J^+$.
	For a compact manifold, $F$ must reach $0$ at two places: after translating in $z$ we may assume $F(-z_0)=F(z_0)=0$, some $z_0>0$.
	We require a bolt of self-intersection $+k$ at $-z_0$ and $-k$ at $+z_0$, so by Lemma \ref{LemmaBolting} we have the four conditions $F(-z_0)=0$, $F(z_0)=0$, $F'(-z_0)=k$, and $F'(z_0)=-k$.
	Using $F=1+\frac12C_1e^{-2z}+C_2e^{-z}+C_3e^{z}+\frac12C_4e^{2z}$, for every choice of $z_0>0$, $k\in\mathbb{N}$, $C_0>0$ we obtain a unique smooth metric
	\begin{equation}
		\begin{aligned}
			&C=C_0e^{-z}, \quad
			F\;=\;1+C_1'\cosh(2z)\,+\,2C_2'\cosh(z), \quad\;\text{where} \\
			&C_1'=\frac{\sinh(z_0)-k\cosh(z_0)}{(2+\cosh(2z_0))\sinh(z_0)}, 
			\quad
			C_2'=\frac{-2\sinh(2z_0)+k\cosh(2z_0)}{2(2+\cosh(2z_0))\sinh(z_0)}.
		\end{aligned} \label{EqnCoefsHirzes}
	\end{equation}
	By Lemma \ref{LemmaClosedJ} these are each extremal K\"ahler metrics.
	For fixed $k$ (which fixes the Hirzebruch surface $\Sigma_{2k-1}$), we have two choices---$C_0$ and $z_0$---giving a 2-parameter family of extremal metrics on each surface.
	Since the K\"ahler cone is two-dimensional---parameterized by the mass of the two bolts---it is easy to see that we have found an extremal K\"ahler metric in each K\"ahler class.
	(We remark that it is already known that every K\"ahler class on $\Sigma_{2k-1}$ has an extremal representative; see \cite{HS95} or \cite{HS97}.)
	
	We remark that an ambiK\"ahler transform of any Hirzebruch surface is itself.
	This is because $F$ of (\ref{EqnCoefsHirzes}) is invariant under $z\mapsto-z$, whereas the conformal factor $e^{-z}$ and complex structure $J^+$ switch: $e^z$ becomes $e^{-z}$ and $J^+$ becomes $J^-$.
	Therefore the map $z\mapsto-z$ is not only a biholomorphism but an isometry between ambiK\"ahler pairs.
	
	\begin{corollary} \label{ThmBachFlatHirz}
		Assume the $U(2)$-invariant Riemannian manifold $(M^4,g)$ is compact and Bach-flat.
		Then $M^4=\mathbb{C}P^2\sharp\overline{\mathbb{C}P}{}^2$ and $g$ is conformal to the Page metric.
	\end{corollary}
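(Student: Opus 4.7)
The strategy is to use the characterization established in the preceding propositions: any $U(2)$-invariant Bach-flat metric is conformal to a $U(2)$-invariant Einstein metric. After such a conformal change (which preserves compactness of $M$ and the $U(2)$-symmetry), the problem reduces to classifying compact $U(2)$-invariant Einstein metrics. By Proposition \ref{PropEinstCondition}, these must have $F = 1 + \tfrac12 C_1 e^{-2z} + C_2 e^{-z} + C_3 e^{z} + \tfrac12 C_4 e^{2z}$ and $C = e^{-z}/(C_5 + C_6 e^{-z})^2$, subject to $C_1 C_5 = C_2 C_6$ and $C_3 C_5 = C_4 C_6$ (equivalently, the vectors $(C_1,C_2)$, $(C_3,C_4)$, $(C_5,C_6)$ are pairwise proportional).

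Next I would apply the topology analysis of Section \ref{SubsecTopology}. Compactness of $M$ rules out ALE, ALF, cusp-like, Einsteinian, and incomplete ends, so each end of the $z$-interval must close up as either a nut or a bolt. Three cases arise: two nuts (forcing all of $C_1, C_2, C_3, C_4$ to vanish, hence $F \equiv 1$, giving a round $S^4$—conformally flat and hence only trivially Bach-flat); nut plus bolt (forcing either the pair $C_1=C_2=0$ or $C_3=C_4=0$, yielding Fubini-Study on $\mathbb{C}P^2$, which is half-conformally flat and again trivially Bach-flat); and two bolts at $\pm z_0$, giving a Hirzebruch surface $\Sigma_{2k-1}$. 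The first two are excluded from the conclusion as degenerate Bach-flat cases (their Bach-flatness comes from the stronger conditions $W=0$ or $W^\mp=0$), so it is the two-bolt case that realizes the genuinely Bach-flat classification.

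In the two-bolt case, I impose the four bolt conditions $F(\pm z_0) = 0$, $F'(-z_0) = k$, $F'(z_0) = -k$ together with the two Einstein relations on the six parameters $C_1,\dots,C_6$, using the gauge freedoms of translation in $z$ and homothety. Setting $u = e^z$, the polynomial $2u^2 F(u)$ is quartic in $u$ with roots at $u_0 = e^{z_0}$ and $u_0^{-1}$. The proportionality constraints from the Einstein relations, combined with the requirement that $C_5 + C_6 e^{-z}$ remain nonzero on $[-z_0, z_0]$, should force this quartic to be palindromic, so $C_1 = C_4$ and $C_2 = C_3$; comparing the bolt derivatives then selects $k = 1$ and hence $M = \Sigma_1 = \mathbb{C}P^2 \sharp \overline{\mathbb{C}P}{}^2$, with the resulting metric matching the Page metric coefficients of Section \ref{SubsecPageMetr}. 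The main obstacle is the rigidity argument excluding $k \geq 2$: unlike the extremal K\"ahler metrics of (\ref{EqnCoefsHirzes}), which exist on every $\Sigma_{2k-1}$, the Einstein conditions rigidly couple $F$ to $C$, and carefully tracking this coupling through the bolt conditions and sign constraints on $C$ is where the substantive algebra lies.
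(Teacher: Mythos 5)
Your skeleton is right and broadly parallels the paper's: reduce to the one-parameter-per-$k$ family of compact two-bolt metrics, then show the Bach-flat constraint singles out $k=1$ and a unique $z_0$. But the decisive step --- excluding $k\ge 2$ --- is exactly the one you leave as ``where the substantive algebra lies,'' with only a hope that palindromicity and ``comparing the bolt derivatives'' will do it. That is the entire content of the corollary. In the paper the argument is concrete and short: work with the K\"ahler representative $C=C_0e^{-z}$, so the bolt conditions $F(\pm z_0)=0$, $F'(\mp z_0)=\pm k$ force $F=1+C_1'\cosh(2z)+2C_2'\cosh(z)$ (palindromicity comes from these four linear conditions alone, not from the Einstein proportionality relations as you suggest), and the conformally invariant Bach-flat constraint $C_1C_4=C_2C_3$ becomes $C_1'=C_2'$, equivalently
\begin{equation*}
k\;=\;\frac{2\,(1+2\cosh z_0)\sinh z_0}{2\cosh z_0+\cosh(2z_0)},
\end{equation*}
whose right-hand side increases from $0$ to $2$ on $z_0\in(0,\infty)$; hence $k=1$ is the only admissible integer, with a unique $z_0$ (the root of $e^{4z_0}-4e^{z_0}-3=0$). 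Without producing this (or an equivalent) inequality, your proof does not close.

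Two further cautions. First, your detour through a \emph{global} Einstein representative is riskier than the paper's route: the Einstein conformal factor $e^{-z}/(C_5+C_6e^{-z})^2$ can degenerate inside $[-z_0,z_0]$, so ``conformal to Einstein'' is a priori only a local statement, whereas the condition $C_1C_4=C_2C_3$ on the extremal-form $F$ is conformally invariant and can be imposed directly on the K\"ahler representative, which is what the paper does. Second, your dismissal of the two-nut and nut-plus-bolt cases as ``degenerate'' is not an argument: the round $\mathbb{S}^4$ and Fubini--Study $\mathbb{C}P^2$ genuinely are compact $U(2)$-invariant Bach-flat metrics fitting the ansatz ($F\equiv 1$, resp.\ $F=1-\Lambda e^{-z}$, with $C=e^{-z}/(C_5+C_6e^{-z})^2$), so if you admit nut ends you must either rule them out honestly or note that the statement implicitly restricts to the two-bolt (fixed-point-free) case --- the paper silently does the latter by asserting that the compact possibilities are exactly the Hirzebruch family.
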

	\begin{proof}
		By Proposition \ref{PropCurvatureIntro} the Bach-flat metrics are those with $F=1+\frac12C_1e^{-2z}+C_2e^{-z}+C_3e^z+\frac12C_4e^{2z}$ and $C_1C_4=C_2C_3$.
		Ignoring the condition $C_1C_4=C_2C_3$ for the moment, up to homothety the metric given by (\ref{EqnCoefsHirzes}) encodes all possibilities for compact, extremal $U(2)$-invariant manifolds.
		In particular when $k=1$ the surface $\Sigma_{2k-1}$ is $\Sigma_{1}=\mathbb{C}P^2\sharp\overline{\mathbb{C}P}{}^2$.
		
		Next we impose the Bach-flat condition $C_1C_4=C_2C_3$; for (\ref{EqnCoefsHirzes}) this becomes $C_1'=C_2'$, which is the same as
		\begin{equation}
			k\;=\;\frac{2(1+2\cosh(z_0))\sinh(z_0)}{2\cosh(z_0)+\cosh(2z_0)}. \label{EqnCalabiBachFlatCond}
		\end{equation}
		The expression on the right is increasing for $z_0\ge{}0$, is zero at $z_0=0$ and approaches $2$ as $z_0\rightarrow\infty$.
		Therefore the only $k\in\mathbb{N}$ for which we can solve (\ref{EqnCalabiBachFlatCond}) is $k=1$.
		For this $k=1$, a unique positive value of $z_0$ solves (\ref{EqnCalabiBachFlatCond})---this is the positive root of $e^{4z_0}-4e^{z_0}-3=0$ which works out to be about $z_0\approx0.579$.
		
		We conclude that one unique function $F$ gives a Bach flat metric on a compact $U(2)$-invariant manifold: the function $F$ of (\ref{EqnCoefsHirzes}) where $k=1$ and $z_0$ is the unique positive solution of $e^{4z_0}-4e^{z_0}-3=0$.
		Because the Page metric is $U(2)$-invariant and Bach-flat, certainly the $F$ of (\ref{EqnPageMetric}) equals the $F$ of (\ref{EqnCoefsHirzes}) for these choices (as can be verified by direct computation).
	\end{proof}


\begin{thebibliography}{9}
		
		\bibitem{ACG03}{V. Apostolov, D. Calderbank and P. Gauduchon}, \emph{The Geometry of Weakly Self-dual Ka\"ahler Surfaces}, {Compositio Mathematica}, {\bf 135} (2003) {279-322}
		
		\bibitem{ACG16}{V. Apostolov, D. Calderbank and P. Gauduchon}, \emph{Ambitoric geometry I: Einstein metrics and extremal ambikaehler structures}, J. reine angew. Math. 721 (2016), 109–147.
		
		\bibitem{Bach21}{R. Bach}, \emph{Zur Weylschen Relativitätstheorie und der Weylschen Erweiterung des Krümmungstensorbegriffs}, Mathematische Zeitschrift {\bf 9} (1921) 112–-135
		
		\bibitem{BeBer82}{L. B\'erard Bergery}, \emph{Sur de nouvelles vari\'et\'es riemanniannes.} Publications de l'Institut E. Cartan, No. 4 (1982) 1--60
		
		\bibitem{Besse}{A. Besse}, \emph{Einstein Manifolds}. Ergebnisse der Mathematik und ihrer Grenzgebiete, Band 10, Springer-Verlag, 1987
		
		\bibitem{Bourg81}{J. Bourguignon}, \emph{Les vari\'et\'es de dimension 4 \'a signature non nulle dont la courbure est harmonique sont d'Einstein}, Inventiones mathematicae {\bf{63}} (1981) 263-–286
		
		\bibitem{Bry01}{R. Bryant}, \emph{Bochner-K\"ahler metrics}. Journal of the American Mathematical Society {\bf 14} No. 3 (2001) {623--715}
		
		\bibitem{Cal82}{E. Calabi}, \emph{Extremal K\"ahler metrics}. In Seminar on Differential Geometry, volume 16 of 102, Pages 259-–290. Ann. of Math. Studies, University Press, 1982
		
		\bibitem{CLW}{X. Chen, C. LeBrun, and B. Weber}, \emph{On conformally K\"ahler, Einstein manifolds.} Journal of the American Mathematical Society 21, no. 4 (2008) 1137--1168.
		
		\bibitem{CK}{S. Cherkis and A. Kapustin}, \emph{Hyperk\"ahler metrics from periodic monopoles}, {Physics Letters D} {\bf 65} (2002) 084015
		
		\bibitem{DerdHarm}{A. Derdzinski}, \emph{A classification of certain compact Riemannian manifolds with harmonic curvature and non-parallel Ricci tensor.} Mathematische Zeitschrift {\bf{172}} (1980) 273--280
		
		\bibitem{Derd}{A. Derdzinski}, \emph{Self-dual K\"ahler manifolds and Einstein manifolds of dimension 4}, {Compositio Mathematica} {\bf 49} No. 3 (1983): 405--433.
		
		\bibitem{Etesi}{G. Etesi}, \emph{The topology of asymptotically locally flat gravitational instantons}, {Physics Letters B} {\bf 641} (2006) {461--465}
		
		\bibitem{EGH80}{T. Eguchi, P. Gilkey, and A. Hanson}, \emph{Gravitation, gauge theories and differential geometry}, Phys. Rep. {\bf 66} No. 6 (1980) 213–-393.
		
		\bibitem{EH78}{T. Eguchi and A. Hanson}, \emph{Asymptotically Flat Self-Dual Solutions to Euclidean Gravity}, {Physics Letters B}, {\bf  74} No. 3 (1978), {249--251}
		
		\bibitem{FYZ}{J. Fu, S. Yau, and W. Zhou}, \emph{On complete constant scalar curvature K\"ahler metrics with Poincar\'e-Mok-Yau asymptotic property}, {Communications in Analysis and Geometry}, {\bf 24} No. 3 (2-16) 521–-557
		
		\bibitem{GH78}{G. Gibbons and S. Hawking}, \emph{Gravitational multi-instantons.} Physics Letters B {\bf 78} no. 4 (1978) 430--432.
		
		\bibitem{GH79}{G. Gibbons and S Hawking}, \emph{Classification of gravitational instanton symmetries.} Communications in Mathematical Physics {\bf 66} no. 3 (1979) 291--310.
		
		\bibitem{GPo78}{G. Gibbons and C. Pope}, \emph{$\mathbb{C}P^2$ as a gravitational instanton}.  Communications in Mathematical Physics {\bf{66}} (1978) 239–-248
		
		\bibitem{GPe80}{G. Gibbons and M. Perry}, \emph{New gravitational instantons and their interactions} Physical Review D (Particles and Fields) {\bf 22} No. 2 (1980) 313--321
		
		\bibitem{GV16}{M. Gursky and J. Viaclovsky}, \emph{Critical metrics on connected sums of Einstein four-manifolds}, {Advances in Mathematics}, {\bf 292} (2016) 210--315
		
		\bibitem{Haw77}{S. Hawking}, \emph{Gravitaitonal Instantons}, {Physics Letters A}, {\bf 60} No. 2 (1977) 81--83
		
		\bibitem{HS95}{A. Hwang and S. Simanca}, \emph{Distinguished K\"ahler metrics on Hirzebruch surfaces}. {Transactions of the American Mathematical Society}, {\bf 347}, No. 3 (1995) 1013--1021
		
		\bibitem{HS97}{A. Hwang and S. Simanca}, \emph{Extremal K\"ahler metrics on Hirzebruch surfaces which are locally conformally equivalent to Einstein metrics}. {Mathematische Annalen}, {\bf 309} (1997) 97--106
		
		\bibitem{KM71}{J. Morrow and K. Kodaira}, \emph{Complex Manifolds}. Holt, Rinehart and Winston, 1971
		
		\bibitem{LeB88}{C. Lebrun}, \emph{Counter-examples to the generalized positive action conjecture.} {Communications in Mathematical Physocs} {\bf 118} (1988) 591--596.
		
		\bibitem{LeB91}{C. Lebrun}, \emph{Explicit self-dual metrics on $\mathbb {CP} ^2\#\cdots\#\mathbb {CP}^2$.} {Journal of Differential Geometry} {\bf 34}  no. 1 (1991) 223--253.
		
		\bibitem{LeB17}{C. LeBrun}, \emph{Bach-flat K\"ahler surfaces}. {The Journal of Geometric Analysis} {\bf 30} (2020) 2491–-2514
		
		\bibitem{Misner63}{C. Misner}, \emph{The flatter regions of Newman, Unti, and Tamburino's generalized Schwarzschild space} Journal of Mathematical Physics {\bf 4} (1963) 924--937
		
		\bibitem{Ot14}{N. Otoba}, \emph{Constant scalar curvature metrics on Hirzebruch surfaces.} {Annals of Global Analysis and Geometry} {\bf{4}} (2014) 197--223
		
		\bibitem{Page78a}{D. Page}, \emph{Taub-NUT instanton with a horizon}. Physics Letters B {\bf 78} 249--251 
		
		\bibitem{Page78b}{D. Page}, \emph{A compact rotating gravitational instanton}. {Physics Letters B.} {\bf 79}, No. (1978) 235--238
		
		\bibitem{PD76}{J. Plebanski and M. Demianski}, \emph{Rotating, charged, and uniformly accelerating mass in general relativity}. {Annals of Physics}. {\bf 98} No. 1 (1976) 98--127
		
		
		\bibitem{Schw16}{K. Schwarzschild}, \emph{On the Gravitational Field of a Mass Point according to Einstein’s Theory}, Sitzungsberichte der Königlich Preussischen Akademie der Wissenschaften zu Berlin (1916) 189--196
		
		\bibitem{TachLiu70}{S. Tachibana and R. Liu}, \emph{Notes on Kaehlerian metrics with vanishing Bochner curvature tensor}, Kodai Mathematical Seminar Reports \textbf{22} (1970), 313--321.
		
	\end{thebibliography}
\end{document}